\documentclass[a4paper, 11pt]{article}

\usepackage{amssymb, amsmath, mathrsfs, amsthm}%, wasysym}
\usepackage{color}
\usepackage{multicol}
\setlength\columnsep{1.2cm}  % 设置两栏之间的间距为0.6cm
\columnseprule=1pt           % 实现插入分隔线，用于分栏
\usepackage{hyperref}
\usepackage[top=3cm, bottom=3cm, left=3cm, right=3cm]{geometry}
\usepackage{float, caption, subcaption}
\usepackage{tikz}
\usepackage{algorithm}
\usepackage{algorithmicx}
\usepackage{algpseudocode}  % ÕâžöºÍalgorithmic²»ŒæÈÝ£¬ÓÃÁËŸÍÒª±šŽí£¬ºÃ¶àÄªÃûÆäÃîµÄŽíÎó£¡£¡£¡£¡£¡

\usepackage{amsfonts}
\usepackage{amscd,color}
\usepackage{amsmath,amsfonts,amssymb,amscd}
\usepackage{indentfirst,graphicx,epsfig}
\input{epsf}
\usepackage{graphicx}
\usepackage{epstopdf}
\usepackage{caption}

\setlength{\textwidth}{152mm}
 \setlength{\textheight}{230mm}
\setlength{\headheight}{2cm}
 \setlength{\topmargin}{0pt}
\setlength{\headsep}{0pt}
\setlength{\oddsidemargin}{0pt}
\setlength{\evensidemargin}{0pt}

%\DeclareGraphicsRule{*}{eps}{*}{}
\input{epsf}

\newcommand{\bd}{\begin{description}}
\newcommand{\ed}{\end{description}}
\newcommand{\bi}{\begin{itemize}}
\newcommand{\ei}{\end{itemize}}
\newcommand{\be}{\begin{enumerate}}
\newcommand{\ee}{\end{enumerate}}
\newcommand{\beq}{\begin{equation}}
\newcommand{\eeq}{\end{equation}}
\newcommand{\beqs}{\begin{eqnarray*}}
\newcommand{\eeqs}{\end{eqnarray*}}

\definecolor{DarkGreen}{rgb}{0.2, 0.6, 0.3}

%%%%%%%%%%%%%%%%%%%%%%%%%%%%%%%%%%%%%%%%%%%%%%%%%%%%%%%%%%

%\newcommand{\labelz}[1]{\label{#1}}
%%%%%%%%%%%%%%%%%%%%%%%%%%%%%%%%%%%%%%%%%%%%%%%%%%%%%%%%%%

% Specific macros

\newtheorem{theorem}{Theorem}[section]
\newtheorem{conjecture}{Conjecture}

\newtheorem{lemma}{Lemma}[section]
\newtheorem{definition}{Definition}

\newtheorem{case}{Case}

\newtheorem{claim}{Claim}
\newtheorem{remark}{Remark}[section]
\newtheorem{fact}{Fact}

\setcounter{case}{0} \setcounter{claim}{0}

\begin{document}
\title{\textbf{Ramsey and Gallai-Ramsey numbers for linear forests and kipas}\footnote{Supported by the National Natural Science Foundation of China (Nos. 12201375, 12061059).} }

\author{Ping
Li\footnote{Corresponding author: School of Mathematics and
Statistics, Shaanxi Normal University, Xi'an, Shaanxi, China. {\tt
lp-math@snnu.edu.cn}}, \ \  Yaping Mao \footnote{JSPS International
Research Fellow: Faculty of Environment and Information Sciences,
Yokohama National University, 79-2 Tokiwadai, Hodogaya-ku, Yokohama
240-8501, Japan. {\tt mao-yaping-ht@ynu.ac.jp} }, \ \ Ingo Schiermeyer \footnote{Institut f\"{u}r Diskrete Mathematik und Algebra, Technische Universit\"{a}t Bergakademie Freiberg, 09596 Freiberg, Germany. {\tt Ingo.Schiermeyer@tu-freiberg.de} },\ \ Yifan Yao \footnote{
        School of Mathematics and Statistics, Qinghai Normal University,
        Xining, Qinghai 810008, China.
        {\tt yaoyifan@aliyun.com}}}
\date{}
\maketitle

\begin{abstract}
For two graphs $G,H$, the \emph{Ramsey number} $r(G,H)$ is the minimum integer $n$ such that any red/blue edge-coloring of $K_n$ contains either a red copy of $G$ or a blue copy of $H$. For two graphs $G,H$, the \emph{Gallai-Ramsey number} $\operatorname{gr}_k(G:H)$ is defined as the minimum integer $n$ such that any $k$-edge-coloring of $K_n$ must contain either a rainbow copy of $G$ or a monochromatic copy of $H$. In this paper, the classical Ramsey numbers of linear forest versus kipas are obtained.
We obtain the exact values of $\operatorname{gr}_k(G:H)$, where $H$ is either a path or a kipas and $G\in\{K_{1,3},P_4^+,P_5\}$ and $P_4^+$ is the
graph consisting of $P_4$ with one extra edge incident with inner vertex. \\[0.2cm]
{\bf Keywords:} Ramsey number; Gallai-ramsey number;
Path; Kipas
\end{abstract}

\section{Introduction}
All graphs considered in this paper are undirected, finite and
simple. We refer to the book \cite{BondyMurty} for graph theoretical
notation and terminology not described here.
For a graph $G$, let
$V(G)$, $E(G)$, $|G|$, $e(G)$ and $\delta(G)$ denote the set of vertices, the set
of edges, the number of vertices, the number of edges and the minimum degree of $G$, respectively.
We also call $|G|$ and $e(G)$ the {\em order} and the {\em size} of $G$, respectively.
The
\emph{neighborhood set} of a vertex $v\in V(G)$ is $N_G(v)=\{u\in
V(G)\,|\,uv\in E(G)\}$, and its \emph{closed neighborhood set} is $N_G[v]=N_G(v)\cup \{v\}$.
%The \emph{degree} of a vertex $v$ in $G$ is
%denoted by $d(v)=|N_{G}(v)|$. Denote by $\delta(G)$ ($\Delta(G)$)
%the minimum degree (maximum degree) of the graph $G$.
For two subsets $X$ and $Y$ of
$V(G)$ we denote by $E_G[X,Y]$ the set of edges of $G$ with one endpoint
in $X$ and the other endpoint in $Y$.
%If $X=\{x\}$, we simply write
%$E_G[x,Y]$ for $E_G[\{x\},Y]$.
Let $N_G(v,S)=N_G(v)\cap S$.
Let
$\overline{G}$ be the complement of $G$. For any integers $m<n$, we
use the convenient notation $[m,n]$ for the set
$\{m,m+1,\cdots,n-1,n\}$, specially, we denote $[n]$ as $[1,n]$.
For a path $P_n=v_1v_2\ldots v_n$, we call $v_{\lceil\frac{n}{2}\rceil}$ and $v_{\lceil\frac{n+1}{2}\rceil}$ {\em center vertices} of $P_n$.
For a graph $H$ and a vertex $v\notin V(H)$, let $v\vee H$ be a graph obtained from $H$ by connecting each vertex of $H$ and $v$.
If $H$ is a path of order $n$, then $v\vee H$ is called a {\em kipas} with center $v$, and denote it by $\widehat{K}_n$.
\begin{definition}
For two graphs $G,H$, the \emph{Ramsey number} $r(G,H)$ is the minimum integer $n$ such that any red/blue edge-coloring of $K_n$ contains either a red copy of $G$ or a blue copy of $H$.
\end{definition}

\begin{definition}
Let $\mathcal{F}$ be a set of graphs. The \emph{Ramsey number} $r(G,\mathcal{F})$ is the minimum integer $n$ such that any red/blue edge-coloring of $K_n$ contains either a red copy of $G$ or a blue copy of graph in $\mathcal{F}$.
\end{definition}

For more details on Ramsey numbers, we refer to the survey paper \cite{Radzi} and some papers \cite{CH72, EMT71, FS, GG}.

\subsection{Ramsey and Gallai-Ramsey numbers}

The Gallai-$k$-coloring (or Gallai-coloring for short) is a $k$-edge-coloring of a complete graph without rainbow triangles.
The following result gives the structure of Gallai-coloring.
\begin{theorem}{\upshape \cite{Gallai,GS}}\label{K3}
In any rainbow triangle free edge-coloring of a complete graph, the vertex set can be partitioned into at least two parts such that between the parts there is a total of at most two colors, and between each pair of parts there is only one color on the edges.
\end{theorem}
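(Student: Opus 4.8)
The plan is to argue by induction on the number $n$ of vertices of $K_n$, carrying the full conclusion (in particular the bound of two colours between parts) along as the inductive hypothesis. The base case $n=2$ is immediate: the two vertices form two singleton parts joined by a single monochromatic edge. For the inductive step fix $n\ge 3$ and a rainbow-triangle-free colouring of $K_n$, delete an arbitrary vertex $v$, and apply the inductive hypothesis to the induced colouring of $K_n-v$. This yields a partition $\{W_1,\dots,W_q\}$ of $V(K_n)\setminus\{v\}$ with $q\ge 2$, with exactly one colour between each pair of parts, and with at most two colours --- call them $1$ and $2$ --- appearing between parts in total. Everything then comes down to putting $v$ back.

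For the reinsertion step the plan is to show that at least one of a short list of moves always produces a legitimate partition of $K_n$ that still uses at most two colours between parts: (i) adjoin $\{v\}$ as a new singleton part; (ii) replace some $W_i$ by $W_i\cup\{v\}$, which does not change the reduced colouring at all; or (iii) find a colour $c$ such that deleting all edges of colour $c$ disconnects $K_n$, and take the connected components of the resulting graph as the parts, every edge between two of them being forced to have colour $c$. Which move is available is dictated by the colours of the edges from $v$ to the parts, and the engine is the rainbow-triangle-free hypothesis applied to a triangle $vuw$ with $u\in W_i$, $w\in W_j$ and $i\ne j$: since the edge $uw$ carries the fixed reduced colour $c_{ij}\in\{1,2\}$, the colours of $vu$, $vw$ and $c_{ij}$ together span at most two values. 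Running this over all pairs of parts forces strong structure on the edges leaving $v$ --- for example, two distinct colours lying outside $\{1,2\}$ can occur only on edges into a single common part, and apart from that the colours from $v$ are almost entirely forced --- and a case analysis on the possible patterns matches each one to move (i), (ii) or (iii).

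The hard part is exactly this case analysis. Move (ii) works precisely when the edges from $v$ to each other part $W_j$ all have colour $c_{ij}$; move (i) needs the edges from $v$ to be constant on every part and to introduce no third colour into the reduced graph; and when neither of these holds one must exhibit a disconnecting colour so that (iii) applies. The genuinely delicate point is guaranteeing that none of this ever forces a third colour between parts --- equivalently, that the reduced graph on the parts can always be kept $2$-coloured --- and checking this in the awkward cases (where $v$ sends a colour outside $\{1,2\}$, or sends both $1$ and $2$ in a pattern incompatible with joining any single $W_i$) is where the real content of the theorem sits. The base case and the routine verifications that the families produced really are partitions with the claimed colour counts are straightforward by comparison.
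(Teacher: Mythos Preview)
The paper does not prove this theorem: it is quoted from \cite{Gallai,GS} and used throughout as a black box, so there is no in-paper argument to compare your proposal against.

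On its own merits, your inductive plan has a genuine gap. You assert that after deleting a vertex $v$ and taking a Gallai partition $\{W_1,\dots,W_q\}$ of $K_n-v$ from the inductive hypothesis, one of moves (i)--(iii) always succeeds. This fails already for a $2$-coloured $K_6$. Take vertices $\{v,u,u',w,w',x\}$, colour the six edges $vu',\,vw',\,vx,\,ux,\,u'x,\,ww'$ with colour~$2$, and colour the remaining nine edges with colour~$1$. With only two colours there are trivially no rainbow triangles. The partition $W_1=\{u,u'\}$, $W_2=\{w,w'\}$, $W_3=\{x\}$ of $K_6-v$ is a legitimate Gallai partition (the $W_1$--$W_2$ and $W_2$--$W_3$ edges are all colour~$1$, the $W_1$--$W_3$ edges are all colour~$2$). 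Now move~(i) fails because $v$ sends both colours into $W_1$; move~(ii) fails for each choice of $W_i$ because the edges from $v$ to the other parts are never monochromatic of the required reduced colour; and move~(iii) fails because deleting either colour class leaves all six vertices in a single component (check directly). A Gallai partition of this $K_6$ nonetheless exists --- all singletons will do --- so the theorem holds here, but your three moves cannot locate it starting from this particular inductive partition.

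The underlying problem is that the inductive hypothesis hands you \emph{some} partition of $K_n-v$, not one tailored to $v$, and your list of moves has no mechanism for refining or discarding it. The proofs in \cite{Gallai,GS} do not proceed by this sort of vertex induction; they extract the partition directly from the structure of the union of two colour classes in a Gallai colouring, which in that setting is forced to be complete multipartite.
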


%Gy\'{a}rf\'{a}s et al. \cite{GSS} obtained the following result.
%\begin{theorem}{\upshape \cite{GSS}}\label{K3bipartite}
%Let $H$ be a fixed graph with no isolated vertices. If $H$ is not
%bipartite, then $\operatorname{gr}_k(K_3; H)$ is exponential in $k$. If $H$ is bipartite, then $\operatorname{gr}_k(K_3; H)$ is
%linear in $k$.
%\end{theorem}

Let $g^q_k(p)$ be the smallest positive integer $n$ such that every Gallai-$k$-coloring of $K_n$ contains a $K_p$ receiving at most $q$ distinct colors.
In \cite{FGP}, Fox et al. studied the integer $p$ when $n$ is fixed, and got the following asymptotic result.
\begin{theorem}{\upshape \cite{FGP}}\label{Fox}
Let $k$ and $q$ be fixed positive integers with $q \leq k$. Every Gallai-$k$-coloring
of $K_n$ contains a set of order $\Omega(n^{\binom{q}{2}/\binom{k}{2}}\log_2^{c_{k,q}}n)$
which uses at most $q$ colors, where $c_{k,q}$ is
only depending on $k$ and $q$. Moreover, this bound is tight apart from the constant factor.
\end{theorem}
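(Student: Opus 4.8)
The plan is to prove matching lower and upper bounds, the lower bound --- producing the large set on which few colors appear --- carrying all of the difficulty. Write $f_{k,q}(n)$ for the largest $t$ such that every Gallai-$k$-coloring of $K_n$ has a set of $t$ vertices on which at most $q$ colors appear; the goal is $f_{k,q}(n)=\Omega\bigl(n^{\binom q2/\binom k2}(\log_2 n)^{c_{k,q}}\bigr)$ with $c_{k,q}=\binom k2-\binom q2$, together with a Gallai-$k$-coloring witnessing that this is best possible up to the constant factor. I would prove the lower bound by induction on $k$. The base cases are classical: $k=1$ is trivial, and for $k=2$ a red/blue coloring of $K_n$ has a monochromatic clique of order at least $\frac12\log_2 n$, exactly the target since $\binom12/\binom22=0$; and $q=k$ is vacuous for all $k$.

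For the inductive step the structural engine is Theorem~\ref{K3}: given a Gallai-$k$-coloring of $K_n$ it produces a partition $V_1,\dots,V_m$ with $m\ge2$, $|V_1|\ge\cdots\ge|V_m|$, whose reduced coloring of $K_m$ uses at most two colors, say $1$ and $2$. I would split into two regimes. If some part is dominant, say $|V_1|\ge n/2$, recurse inside $V_1$; iterating the Gallai decomposition and halting once the largest current part drops below half the current order makes this cost only $O(\log n)$ steps, each losing a bounded factor, and these losses are absorbable. Otherwise no part is dominant, and Ramsey's theorem applied to the reduced $2$-coloring of $K_m$ gives $I\subseteq[m]$ with $|I|\ge\frac12\log_2 m$ on which the reduced coloring is monochromatic, say of color $1$, and with $\sum_{i\in I}|V_i|$ a positive fraction of $n$ (using $|V_1|<n/2$). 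On $W=\bigcup_{i\in I}V_i$ every cross-edge has color $1$, so for a color-set $S$ with $1\in S$ a largest $S$-colored clique of $W$ is obtained by uniting largest $S$-colored cliques of the individual $V_i$, whereas for $1\notin S$ such a set must lie in a single $V_i$. Keeping color $1$ effectively turns the cross-level problem into one with $k-1$ colors; discarding color $1$ feeds a large sub-piece into the induction hypothesis for $k-1$ colors. Balancing the two options against the dominant-part recursion yields the bound, the power of $\log_2 n$ building up because the $\binom k2-\binom q2$ reductions performed over the course of the recursion each contribute one logarithmic Ramsey gain.

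For sharpness I would give an explicit construction. Put $n=\ell^{\binom k2}$ and identify $V(K_n)$ with $[\ell]^{\binom k2}$, the coordinates indexed by the $2$-subsets of $[k]$ and totally ordered, say lexicographically. For the coordinate indexed by $\{a,b\}$ fix a $2$-coloring $\phi_{\{a,b\}}$ of $K_\ell$ in colors $a$ and $b$ both of whose color classes have clique number $O(\log_2\ell)$ --- such a coloring exists by the probabilistic lower bound for Ramsey numbers. Color an edge $xy$ with $\phi_{\{a,b\}}\bigl(x_{\{a,b\}},y_{\{a,b\}}\bigr)$, where $\{a,b\}$ is the least coordinate in which $x$ and $y$ differ. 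This coloring is Gallai: in any triangle, at its least relevant coordinate $\{a,b\}$ either only two of the three coordinate-values are distinct, forcing two of its edges to receive the same $\phi_{\{a,b\}}$-color, or all three are distinct, in which case all three edges are colored by $\phi_{\{a,b\}}$ and so use at most the two colors $a,b$. Finally, an iterated projection (split a clique by its first coordinate and recurse on the rest) shows that any clique using at most $q$ colors, say exactly the colors of a $q$-set $S$, has size at most $\prod_{\{a,b\}}m_{\{a,b\}}$, where $m_{\{a,b\}}$ is the largest clique of $\phi_{\{a,b\}}$ using only colors in $\{a,b\}\cap S$; since $m_{\{a,b\}}\le\ell$ for the $\binom q2$ coordinates with $\{a,b\}\subseteq S$ and $m_{\{a,b\}}=O(\log_2\ell)$ for the remaining $\binom k2-\binom q2$ coordinates, substituting $\ell=n^{1/\binom k2}$ gives the matching upper bound $O\bigl(n^{\binom q2/\binom k2}(\log_2 n)^{\binom k2-\binom q2}\bigr)$.

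The main obstacle is the bookkeeping in the lower bound: a crude recursion loses a constant factor at every level and the depth can be linear in $n$; the ``halt when no part is dominant'' device repairs this, but one must then show that precisely the exponent $\binom k2-\binom q2$ of $\log_2 n$ is produced --- no smaller, in order to meet the construction, and no larger, which the recursion respects only under a carefully weighted induction rather than a naive one. The conceptual core is the case $q=1$, the Erd\H{o}s--Hajnal-type statement for rainbow-triangle-free colorings; once that recursion is calibrated, general $q$ follows by iterating the single-color-removal step.
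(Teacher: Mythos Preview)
The paper does not prove this theorem: it is quoted verbatim from Fox--Grinshpun--Pach \cite{FGP} as background, with no argument supplied. There is therefore nothing in the present paper to compare your proposal against.

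That said, your sketch is broadly faithful to the approach in \cite{FGP}: the Gallai partition (Theorem~\ref{K3}) as structural engine, the split into a dominant-part regime versus a balanced regime, the Ramsey gain on the reduced $2$-coloring, and the lexicographic product construction for sharpness are all the right ingredients. Your identification of the exponent $c_{k,q}=\binom{k}{2}-\binom{q}{2}$ and of the $q=1$ case as the conceptual core is also correct. The caveat you flag --- that a naive recursion loses too much and one needs a weighted induction to pin down the logarithmic exponent --- is exactly where the work lies in \cite{FGP}; your outline gestures at the fix but does not carry it out, so as written this is a plan rather than a proof. If you want a self-contained argument you would need to state and prove the calibrated recursive inequality for $f_{k,q}(n)$ explicitly.
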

Note that $g^1_k(p)$ is the minimum integer $n$ such that every  Gallai-$k$-coloring of $K_n$ contains a monochromatic $K_p$.
The $g^1_k(p)$ is also denoted by $\operatorname{gr}_k(K_3: K_p)$.

\begin{definition}
For two graphs $G,H$, the \emph{Gallai-Ramsey number} $\operatorname{gr}_k(G:H)$ is defined as the minimum integer $N$ such that for any $n\geq N$ and any exact $k$-edge-coloring (using $k$ colors) of $K_n$ must contain either a rainbow copy of $G$ or a monochromatic copy of $H$.
\end{definition}

\begin{definition}
Define $\operatorname{gr}'_{k}(G: H)$ as the minimum integer $N$ such that for all $n\geq N$, every edge-coloring of $K_n$ using at most $k$ colors contains either a rainbow copy of $G$ or a monochromatic copy of $H$.
\end{definition}

It is obvious that $\operatorname{gr}'_{k}(G: H) \leq \max\{\operatorname{gr}_{i}(G: H): 1 \leq i \leq k\}$.
On the other hand, we have that $\operatorname{gr}'_{k}(G: H) \geq \max\{\operatorname{gr}_{i}(G: H): 1 \leq i \leq k\}$.
Otherwise, there exists an integer $k'$ ($1\leq k'\leq k$) such that $\operatorname{gr}_{k'}(G: H)>\operatorname{gr}'_{k}(G: H)$.
By the definition of $\operatorname{gr}_{k'}(G: H)$, there is a $k'$-edge-coloring of $K_{\operatorname{gr}_{k'}(G: H)-1} $ such that there is neither rainbow copy of $G$ nor monochromatic copy of $H$.
However, $\operatorname{gr}_{k'}(G: H)-1\geq \operatorname{gr}'_{k}(G: H)$, which implies that there is either a rainbow copy of $G$ or a monochromatic copy of $H$ in any $k'$-edge-coloring of $K_{\operatorname{gr}_{k'}(G: H)-1} $, a contradiction.
Therefore, $\operatorname{gr}'_{k}(G: H)= \max\{\operatorname{gr}_{i}(G: H): 1 \leq i \leq k\}$, and we only need to talk about the parameter $\operatorname{gr}_{k}(G: H)$.

Fox et al. \cite{FGP} proposed the following conjecture.
\begin{conjecture}{\upshape \cite{FGP}}
For integers $k\geq 3$ and $p\geq3$,
$$
\operatorname{gr}_k(K_3:K_p)=\left\{
\begin{array}{ll}
(r(K_p,K_p)-1)^{k/2}+1, &  \mbox{if }n\mbox{ is even},\\
(p-1)(r(K_p,K_p)-1)^{(k-1)/2}+1, & \mbox{if }n\mbox{ is odd}.\\
\end{array}
\right.$$
\end{conjecture}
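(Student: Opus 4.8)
The strategy is to bound $\operatorname{gr}_k(K_3:K_p)$ from below by an explicit construction and from above by induction on $k$ built on the Gallai structure theorem (Theorem~\ref{K3}); the upper bound is the substantial half. Throughout write $R=r(K_p,K_p)$ and $N_k=\operatorname{gr}_k(K_3:K_p)$.

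For the lower bound fix a $2$-colouring $c_2$ of $K_{R-1}$ with no monochromatic $K_p$, which exists by the definition of $R$. The construction is an iterated substitution: replace every vertex of a copy of $c_2$ by a fresh copy of $c_2$ on two new colours, colour each edge between two inner copies by the colour its two outer endpoints received, and repeat; after $m$ rounds (with $2m=k$) this produces a $k$-colouring of $K_{(R-1)^{k/2}}$. It has no rainbow triangle: a triangle inside one innermost block inherits this, a triangle meeting exactly two blocks of some level has two equal ``between'' edges, and a triangle meeting three blocks of some level lies in a $2$-coloured $K_3$. It also has no monochromatic $K_p$, since every colour class consists of edges running between the sub-blocks of a single block at one level of the recursion, so a monochromatic $K_p$ would project to one of $c_2$. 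For odd $k=2m+1$ one performs one extra substitution replacing each vertex by a monochromatic $K_{p-1}$ in a brand-new colour, which multiplies the vertex count by $p-1$, adds one colour, and creates no monochromatic $K_p$ since the new colour is confined to those $K_{p-1}$'s. Hence $N_k$ is at least the conjectured value.

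For the upper bound, induct on $k$ with base cases $N_1=p$ and $N_2=R$. Given a Gallai-$k$-colouring of $K_n$ with $n$ at least the conjectured value of $N_k$, apply Theorem~\ref{K3} to get a Gallai partition $V_1,\dots,V_t$ with $t\ge 2$ chosen minimum, whose reduced complete graph $\mathcal R$ on the blocks uses at most two colours, say $1$ and $2$. If $t\ge R$, then $\mathcal R$ contains a monochromatic $K_p$ by Ramsey's theorem, and one vertex per block gives a monochromatic $K_p$ in $K_n$; hence $t\le R-1$. The key ingredient is a propagation observation: if a block $V_i$ is joined in $\mathcal R$ to some block by colour $c$ and also contains a colour-$c$ copy of $K_{p-1}$, then one more vertex completes a monochromatic $K_p$; iterating, each block must contribute to the count like a Gallai colouring on only $k-2$ colours, so $|V_i|\le N_{k-2}-1$. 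With $t\le R-1$ this yields $N_k-1\le(R-1)(N_{k-2}-1)$, matching the conjectured value exactly; the degenerate configurations ($t=2$, or unbalanced reduced colourings in which some block sees a single colour) instead produce recursions $N_k-1\le c\,(N_{k-j}-1)$ with $j\in\{1,2\}$ and small $c$, all dominated by the conjectured value because $R-1\ge 2(p-1)$. Treating the two parities of $k$ separately then closes the induction.

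The real difficulty lies entirely in the upper bound, in two layers. The first is combinatorial bookkeeping: turning the informal propagation step into a clean statement that legitimately reduces each block to $k-2$ colours, and disposing of the small-$t$ reduced colourings, is already delicate, which is why the formula has been verified only for $p=3$ (the classical value of $N_k$ for $p=3$, where $R-1=5$) and $p=4$ (where $R-1=17$), the step from $K_3$ to $K_4$ already requiring a long case analysis. The second, and the reason the conjecture is open for $p\ge 5$, is that making the recursion tight rather than merely of the right exponential order needs the exact value of $r(K_p,K_p)$ (unknown for $p\ge 5$) together with a stability theorem asserting that the $K_p$-free $2$-colourings of $K_{R-1}$ surviving the iterated substitution are, up to symmetry, those used in the construction. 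Absent such structural control, the induction delivers only an upper bound of the correct exponential shape but not the exact base, and the precise formula remains conjectural.
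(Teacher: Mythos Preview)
The statement you were asked to prove is presented in the paper as a \emph{conjecture} (cited from \cite{FGP}), not as a theorem; the paper offers no proof of it. Immediately after stating it the authors simply record which special cases are known (the case $p=3$ via \cite{CG,GSS}, $p=4$ via \cite{LMSS}, and one further case via \cite{MS-K5}) and move on. There is therefore no ``paper's own proof'' to compare your proposal against.

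Your write-up is not a proof either, and you acknowledge this yourself in the last paragraph: the lower-bound construction by iterated blow-up is correct and standard, and the Gallai-partition induction you sketch for the upper bound is indeed the natural line of attack, but the key step---showing that each block of the partition behaves as if it used only $k-2$ colours, so that the recursion $N_k-1\le (R-1)(N_{k-2}-1)$ holds tightly---does not go through in general. The ``propagation observation'' as stated is too weak: a block need not literally avoid colours $1$ and $2$, it only avoids a monochromatic $K_{p-1}$ in whichever of those colours it sees externally, and converting that into the bound $|V_i|\le N_{k-2}-1$ is precisely the missing structural lemma. For $p\ge 5$ the argument further presupposes the exact value of $r(K_p,K_p)$, which is unknown. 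So what you have written is an accurate exposition of the standard approach and of why the conjecture remains open, not a proof.
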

Chung and Graham \cite{CG} and Gy\'{a}rf\'{a}s et al. \cite{GSS} proved the case $p=3$, Liu et al. \cite{LMSS} proved the case $p=4$, and Magnant and Schiermeyer \cite{MS-K5} proved the case $k=5$.
For more details on the Gallai-Ramsey numbers, we refer to a dynamic survey \cite{FMO} and the recent book \cite{MN-book}.

\subsection{Structural theorems and our methods}

The Gallai-$k$-coloring, a $k$-edge-coloring of $K_n$ without rainbow $K_3$, has an interesting structure.
Fujita and Magnant \cite{FM-K3+} consider the structures of $k$-edge-coloring of $K_n$ without rainbow $S_3^+$, where $S_3^+$ is a graph obtain from $K_3$ by adding a pendent edge.
Thomason and Wagner \cite{TW} got the structures of $k$-edge-coloring of $K_n$ without rainbow $P_5$.
In addition,
Gy\'{a}rf\'{a}s et al. \cite{GLST87} investigated the local Ramsey number of stars, which implies the structures of $k$-edge-coloring of $K_n$ without a rainbow star $K_{1,3}$ or a rainbow tree $P_4^+$, respectively, where $P_4^+$ is the
graph consisting of $P_4$ with one extra edge incident with a center vertex.

In this paper, we study the Gallai-Ramsey number $\operatorname{gr}_k(G:H)$, where $G\in \{K_{1,3},P_5,P_4^+\}$ and $H$ is a path or a kipas.
The following are the structures of $k$-edge-colored $K_n$ without rainbow $P_5,K_{1,3}$ and $P_4^+$, respectively.

Given an edge coloring of $K_n$, let $E_j$ be the set of edges colored $j$ and let $V_j$ be the vertices at which an edge of color $j$ appears.
The following are the structures of $k$-edge-coloring of $K_n$ without a rainbow $P_5$.
\begin{theorem}{\upshape \cite{TW}}\label{P5}
Let $k,n$ be two integers with $k\geq 4$ and $n\geq 5$. Let $K_n$ be a $k$-edge-colored graph so that it contains no rainbow $P_5$. Then, after renumbering the colors, one of the
following must hold:
\begin{itemize}
\item[] $(i)$ color 1 is dominant, meaning that the sets $V_j$, $j\geq2$, are disjoint;

\item[] $(ii)$ $K_n-a$ is monochromatic for some vertex $a$;

\item[] $(iii)$ there are three vertices $a,b,c$ such that $E_2=\{ab\}$,
$E_3=\{ac\}$, $E_4$ contains $bc$ plus perhaps some edges incident
with $a$, and every other edge is in $E_1$;

\item[] $(iv)$ there are four vertices $a,b,c,d$ such that $\{ab\}\subseteq
E_2\subseteq \{ab, cd\}$, $E_3=\{ac, bd\}$, $E_4=\{ad, bc\}$ and
every other edge is in $E_1$;

\item[] $(v)$ $n=5$, $V(K_n) = \{a, b, c, d, e\}$, $E_1 =\{ad, ae, bc\}$,
$E_2=\{bd, be, ac\}$, $E_3=\{cd, ce, ab\}$ and $E_4=\{de\}$.
\end{itemize}
\end{theorem}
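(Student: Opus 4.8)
\medskip
\noindent\textbf{Proof plan.}
Write $c(e)$ for the colour of an edge $e$, and assume all $k\ge 4$ colours occur. The first observation is that the hypotheses already force a rainbow $P_4$. Since $k\ge 4$ the colouring is not monochromatic, so there is a rainbow $P_3$, say $xyz$ with $c(xy)=1$ and $c(yz)=2$. If there were no rainbow $P_4$, then for every $w\notin\{x,y,z\}$ the paths $wxyz$ and $wzyx$ force $c(wx),c(wz)\in\{1,2\}$, and inspecting the paths $vwyx$ and $vwyz$ over outside vertices $v\ne w$ (here $n\ge 5$ is used) shows that either every outside vertex has all of its edges coloured $1$ or $2$, or all edges within $\{y\}\cup(V(K_n)\setminus\{x,y,z\})$ share one further colour. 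The first alternative uses at most three colours; the second forces $c(xz)=3$, all those inner edges coloured $4$, and colour-$1$ resp.\ colour-$2$ stars at $x$ resp.\ $z$ --- but then $pxzq$ is a rainbow $P_4$ for distinct $p,q\in\{y\}\cup(V(K_n)\setminus\{x,y,z\})$, a contradiction. So we may fix a rainbow $P_4$ and, after renumbering, write it as $P=p_1p_2p_3p_4$ with $c(p_ip_{i+1})=i$ for $i=1,2,3$.

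Next I would record the constraints $P$ imposes on the rest of the graph. For $u\notin V(P)$ the ``end'' paths $up_1p_2p_3p_4$ and $p_1p_2p_3p_4u$ give $c(up_1),c(up_4)\in\{1,2,3\}$; the ``bypass'' paths $p_1p_2up_3p_4$, $p_4p_3up_2p_1$, $p_1p_2up_4p_3$ restrict $c(up_2),c(up_3)$; paths that hang $u$ off an end and then use a chord (e.g.\ $up_2p_1p_3p_4$, $up_1p_2p_4p_3$, $up_2p_3p_1p_4$) control the chord colours $c(p_1p_3),c(p_2p_4),c(p_1p_4)$; and for outside $u,u'$ the paths $u'up_1p_2p_3$ and $p_4p_3p_2uu'$ bound $c(uu')$. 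From here the argument is a long case analysis whose skeleton is as follows. If after renumbering the classes $V_j$ with $j\ge 2$ can be taken pairwise disjoint, we are in case~$(i)$. Otherwise there are a vertex $a$ and colours $\alpha\ne\beta$ (both $\ge 2$ after a suitable choice of background colour) with a colour-$\alpha$ edge $ab$ and a colour-$\beta$ edge $ac$ at $a$; combining $ab,ac$ with $P$ in the $P_5$-free condition one shows that either every edge lying outside one colour class is incident to $a$, so that $K_n-a$ is monochromatic (case~$(ii)$), or exactly four colours occur and $E_2\cup E_3\cup E_4$ spans at most five vertices, and then checking which patterns on those vertices avoid a rainbow $P_5$ yields cases~$(iii)$, $(iv)$ and, when $n=5$, the exceptional colouring~$(v)$.

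The step I expect to be the main obstacle is this last bookkeeping. Each edge carrying a colour other than the background one creates several new copies of $P_5$ --- through $P$, through the conflict vertex $a$, and through other off-background edges --- and one must verify that forbidding every one of them to be rainbow is possible \emph{only} in the four patterns listed; in particular one has to exclude a fifth colour in cases~$(iii)$--$(v)$, so that those colourings genuinely use exactly four colours, and one must not overlook the exceptional colouring~$(v)$, which is not a specialisation of $(i)$--$(iv)$ and occurs only at $n=5$. A convenient way to keep the case tree finite is to verify $n=5$ by hand and then, for $n\ge 6$, delete a vertex whose removal leaves at least four colours, apply the statement to $K_{n-1}$, and reinsert the vertex using the local constraints above; the residual situation in which no such vertex exists (every vertex being a ``hub'' of some colour class) is small and is handled directly.
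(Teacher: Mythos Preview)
This theorem is not proved in the paper: it is quoted verbatim from Thomason and Wagner~\cite{TW} and used as a black box (see also the remark that follows it and the later discussion about which of $(i)$--$(v)$ are ``key structures''). So there is no proof here to compare your proposal against; the authors simply cite the result.

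As for your plan on its own merits: the overall strategy---locate a rainbow $P_4$, derive local colour constraints on edges touching it, and then run a case analysis or an induction on $n$---is exactly the shape of the Thomason--Wagner argument, and your list of ``end'', ``bypass'' and ``chord'' paths is the right toolbox. The part you flag as the main obstacle is indeed the delicate one: showing that once $(i)$ fails, the off-background edges are confined to a bounded vertex set and then pinning down $(iii)$--$(v)$ requires careful bookkeeping, and your sketch does not yet make precise how the ``conflict vertex $a$'' interacts with the fixed rainbow $P_4$ (they need not share vertices, so the step ``combining $ab,ac$ with $P$'' needs an intermediate argument). If you want to complete this, the cleanest route is the one you mention at the end: verify $n=5$ by hand and induct, but note that in the inductive step you must also handle the possibility that $K_{n-1}$ lands in case $(v)$ (impossible once $n-1\ge6$) or in $(ii)$--$(iv)$, not only in $(i)$.
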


Let $G_1(n)$ be a $3$-edge-coloring of $K_n$ such that $V(K_n)$ can be partitioned into three parts $V_1,V_2,V_3$ (at most one of them is empty) such that each edge of $G[V_1]$ is colored by either $1$ or $3$, each edge of $G[V_2]$ is colored by either $1$ or $2$, and each edge of $G[V_3]$ is colored by either $2$ or $3$.
Moreover, each edge between $V_1,V_2$ is colored by $1$, each edge between $V_2,V_3$ is colored by $2$ and each edge between $V_1,V_3$ is colored by $3$.

Let $G_2(n)$ be a $4$-edge-coloring of $K_n$ such that there is exactly one edge, say $xy$, having
color $2$. All other edges incident with $x$ are colored by $3$, and all other edges incident with $y$ are colored by $4$. All edges not incident to vertices $x, y$ have color $1$.

Let $G_3(n)$ be a $4$-edge-coloring of $K_n$ such that there exists a rainbow $K_3$ (say the vertices in the triangle are $a,b,c$, where $ab$ has color $2$, $bc$ has color $3$ and $ac$ has color
$4$). In addition, any other edge has color $1$.

\begin{remark}\label{remark-1}
If $\Gamma$ is an edge-coloring of $\{(ii),(iii),(iv),(v),G_2(n),G_3(n)\}$ of $K_n$, then the graph induced by edges with color $i\geq 2$ is a star forest.
\end{remark}

The following are the structures of $k$-edge-coloring of $K_n$ without a rainbow $K_{1,3}$.
\begin{theorem}[\cite{GLST87}]\label{K13}
For positive integers $k\geq 3$ and $n$, if $G$ is a $k$-edge-coloring of
$K_n$ without rainbow $K_{1,3}$, then after renumbering the colors,
one of the following holds.

$(I1)$ $k=3$ and $G\cong G_1(n)$;

$(I2)$ $k\geq 4$ and Item $(i)$ in Theorem
\ref{P5} holds.
\end{theorem}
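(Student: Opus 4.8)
The plan is to recast the hypothesis as a condition on the color sets seen at the vertices and then invoke a short combinatorial fact about intersecting families. Since $K_n$ is complete, a rainbow $K_{1,3}$ is present exactly when some vertex is incident with three edges carrying three distinct colors; hence ``$G$ contains no rainbow $K_{1,3}$'' is equivalent to the statement that for every vertex $v$ the \emph{palette} $c(v)$ --- the set of colors occurring on edges at $v$ --- has $|c(v)|\le 2$. (We may assume $n\ge 3$, since $k\ge 3$ forces this.) The decisive observation is that for any two vertices $u,v$ the edge $uv$ gets a color in $c(u)\cap c(v)$; thus the family $\mathcal P$ of palettes that actually occur is a \emph{pairwise intersecting} family of sets of size at most $2$, and, moreover, if some palette is a singleton $\{i\}$ then $i$ belongs to every palette.

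Next I would apply the elementary description of a pairwise intersecting family $\mathcal P$ of sets of size at most $2$: either all members of $\mathcal P$ contain one common element, or $\mathcal P=\{\{a,b\},\{b,c\},\{a,c\}\}$ for three distinct elements $a,b,c$. (Sketch: if there is no common element then, by the singleton remark, every member has size $2$ and $|\mathcal P|\ge 3$; picking $\{a,b\}\in\mathcal P$ together with members avoiding $a$ and avoiding $b$ respectively and forcing each of them to meet $\{a,b\}$ shows $\{b,c\},\{a,c\}\in\mathcal P$ with a common $c$, and then any further member must meet all of $\{a,b\},\{b,c\},\{a,c\}$ and hence lie inside $\{a,b,c\}$.)

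The argument then splits according to these alternatives. In the triangle alternative, relabel $\{a,b,c\}$ as $\{1,2,3\}$; every edge color lies in the palettes of its ends, so only the colors $1,2,3$ occur and $k=3$, and taking the classes $V_1,V_2,V_3$ to be the vertices with palette $\{1,3\},\{1,2\},\{2,3\}$ respectively, one checks that each edge inside a class uses the two colors of that class while each edge joining two different classes is forced to take the color that the two associated $2$-sets share --- which is precisely $G_1(n)$, so $(I1)$ holds. In the common-element alternative, relabel the common color as $1$; then each palette is $\{1\}$ or $\{1,j\}$ with $j\ge 2$, so no vertex sees two colors that are both at least $2$, i.e.\ the sets $V_j$ ($j\ge 2$) defined just before Theorem~\ref{P5} are pairwise disjoint, which is exactly Item~$(i)$ there. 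If $k\ge 4$ this is case $(I2)$ and we are done. If $k=3$, I would moreover exhibit $G$ as $G_1(n)$ by taking $V_1=\{v:c(v)=\{1,3\}\}$, $V_2=\{v:c(v)\subseteq\{1,2\}\}$ and $V_3=\emptyset$ --- both $V_1$ and $V_2$ are nonempty since colors $2$ and $3$ occur --- and verifying that the edges inside $V_1$, inside $V_2$, and between $V_1$ and $V_2$ receive colors in $\{1,3\}$, colors in $\{1,2\}$, and the color $1$, respectively; hence $(I1)$ holds.

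The only point needing real care is the $k=3$ bookkeeping: confirming that the common-element sub-case with exactly three colors genuinely has the form $G_1(n)$ with a single empty class, and that the triangle sub-case cannot occur once $k\ge 4$; everything else is routine. For completeness I would also remark that the hypothesis already precludes a rainbow $P_5$ --- in a rainbow $v_1v_2v_3v_4v_5$ the palette of $v_2$ would consist exactly of the colors of $v_1v_2$ and $v_2v_3$ and that of $v_4$ exactly of the colors of $v_3v_4$ and $v_4v_5$, four distinct colors, so the edge $v_2v_4$ could not be colored --- so Theorem~\ref{P5} is available and one could instead dispatch cases $(ii)$--$(v)$ directly; but the palette argument is shorter, so that is what I would write up.
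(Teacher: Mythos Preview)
Your proof is correct. Note, however, that the paper does not actually give a proof of Theorem~\ref{K13}: it is quoted from \cite{GLST87} and used as a black box, so there is no ``paper's own proof'' to compare against. Your palette argument is a clean, self-contained derivation: the reduction to a pairwise intersecting family of $\le 2$-sets, the dichotomy (common element vs.\ the triangle $\{\{a,b\},\{b,c\},\{a,c\}\}$), and the identification of the two outcomes with $G_1(n)$ and Item~$(i)$ are all sound. The $k=3$ bookkeeping you flag is handled correctly --- in the triangle alternative all three palette classes are nonempty, and in the common-element alternative with $k=3$ the choice $V_3=\emptyset$ is permitted by the definition of $G_1(n)$ (``at most one of them is empty''). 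The closing remark that no rainbow $K_{1,3}$ already forbids a rainbow $P_5$ is also correct, though as you say it is not needed here.
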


The following are the structures of $k$-edge-coloring of $K_n$ without rainbow $P_4^+$.

\begin{theorem}[\cite{GLST87}]\label{P4+}
For positive integers $k\geq 4$ and $n$, if $G$ is a $k$-edge-coloring of
$K_n$ without rainbow $P_{4}^{+}$, then after renumbering the colors,
one of the following holds.

$(J1)$ $k=4$ and $G\in \{G_2(n),G_3(n)\}$;

$(J2)$ $k\geq 4$ and Item $(i)$ in Theorem
\ref{P5} holds.
\end{theorem}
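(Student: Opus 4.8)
The plan is to combine Theorem~\ref{K13} with a local analysis around a fixed rainbow $K_{1,3}$. Note first that $K_{1,3}$ is a subgraph of $P_4^+$ — it is the star at the centre vertex of the spider $P_4^+$ — so a colouring with no rainbow $K_{1,3}$ also has no rainbow $P_4^+$; since $k\ge 4$, Theorem~\ref{K13} then yields item $(I2)$, that is, item $(i)$ of Theorem~\ref{P5}, which is exactly $(J2)$. Hence we may assume that the $k$-edge-colouring $G$ of $K_n$ ($k\ge 4$) contains no rainbow $P_4^+$ but does contain a rainbow $K_{1,3}$, and the task is to prove $k=4$ and $G\in\{G_2(n),G_3(n)\}$. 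Throughout I would use the following mechanism: if a vertex $u$ has three incident edges $ua_1,ua_2,ua_3$ of pairwise distinct colours, and some edge $a_ib$ with $b\notin\{u,a_1,a_2,a_3\}$ satisfies $c(a_ib)\notin\{c(ua_1),c(ua_2),c(ua_3)\}$, then those four edges form a rainbow $P_4^+$ (the spider centred at $u$ with the leg through $a_i$ extended to $b$).

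The first step is to show that no vertex of $G$ is incident with four distinct colours. Suppose $v'$ has incident edges $v'x_1,v'x_2,v'x_3,v'x_4$ of colours $1,2,3,4$. Applying the mechanism with centre $v'$ and the three legs other than $v'x_i$ forces, for every $u\notin\{v',x_1,\dots,x_4\}$, that $c(x_iu)=i$ (by intersecting the constraints obtained from the different choices of three legs), and similarly that $c(x_ix_j)\in\{1,2,3,4\}\setminus\{i,j\}$. A short case analysis on the colours $c(x_ix_j)$ then exhibits a rainbow $K_{1,3}$ at one of the $x_i$ that extends — through $v'$ or through a vertex outside — to a rainbow $P_4^+$ (the few configurations with $n=5$ being checked by hand). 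Consequently every vertex of $G$ sees at most three colours; in particular the centre $v$ of our fixed rainbow $K_{1,3}$ sees exactly the colours $\{1,2,3\}$, on edges $vp,vq,vs$.

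Write $W=V(K_n)\setminus\{v,p,q,s\}$. The second step is rigidity near $\{v,p,q,s\}$: the mechanism with centre $v$ immediately gives $c(wp),c(wq),c(ws)\in\{1,2,3\}$ for every $w\in W$, and, combining this with the ``at most three colours per vertex'' bound, that every edge inside $W$ has colour $1$ (otherwise either a rainbow $P_4^+$ appears, or fewer than four colours occur in total, contradicting $k\ge 4$). After this reduction the only edges whose colours are not yet severely constrained are the three edges of the triangle $pqs$, the edges from $v$ to $W$, and the edges from $\{p,q,s\}$ to $W$.

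The last step is a finite case analysis on this remaining data, organised around the colours seen at $p$, $q$, $s$ and $v$ (each at most three) and driven by repeated use of the mechanism above. One verifies that every consistent colouring either uses at most three colours in total — impossible since $k\ge 4$ — or is, after renaming colours, exactly $G_2(n)$ (two vertices $q,s$ with the edge $qs$ of a fourth colour, all other edges at $q$ of one colour, all other edges at $s$ of another, and every edge avoiding $\{q,s\}$ of colour $1$), or exactly $G_3(n)$ (a rainbow triangle, all other edges of colour $1$); in either case $k=4$. The small cases ($n$ small, where $W$ is empty or very small) are dealt with directly. I expect this final case analysis to be the main obstacle: the admissible colourings of the triangle $pqs$ together with the $\{v,p,q,s\}$-to-$W$ edges have to be enumerated without overlooking a sporadic possibility and without over-constraining and thereby losing $G_2(n)$ or $G_3(n)$, and the constraints are coupled through the degree-three colour bound at several vertices simultaneously, so the bookkeeping has to be set up with care.
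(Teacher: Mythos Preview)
The paper does not supply a proof of Theorem~\ref{P4+}; it is quoted from \cite{GLST87} and used as a black box (in the proof of Theorem~\ref{main-p4+-paths}), so there is no in-paper argument to compare your proposal against.

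Your plan is nonetheless a sensible route to the result. The dichotomy ``no rainbow $K_{1,3}$ / some rainbow $K_{1,3}$'' together with Theorem~\ref{K13} handles the first case cleanly (the remark that $K_{1,3}\subseteq P_4^{+}$ is true but not actually needed there --- you only use that the hypothesis of Theorem~\ref{K13} is met). The extension mechanism is exactly the right engine, and your derivation that $c(x_iu)=i$ and $c(x_ix_j)\in\{1,2,3,4\}\setminus\{i,j\}$ when some vertex $v'$ sees four colours is correct for $n\ge 6$. Two places deserve more care. First, the ``short case analysis'' ruling out a vertex incident with four colours is not as short as advertised: the six values $c(x_ix_j)$ each have two options, one must also control $c(v'u)$ for outside vertices $u$, and several rainbow $K_{1,3}$'s at the $x_i$ fail to extend; the contradiction does come, but it takes a few more rounds of the mechanism than you suggest. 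Second, your claim in Step~2 that every edge inside $W$ already has colour~$1$ is premature --- at that stage the mechanism only forces such an edge into $\{1,2,3\}$ (a fourth colour on $w_1w_2$ can be killed by building a rainbow $K_{1,3}$ at $w_1$ and extending through $v$), and collapsing all of $G[W]$ to a single colour is really part of the final enumeration you defer to Step~3. As you anticipate, Step~3 is where the real bookkeeping lives; it is doable, but it requires systematically tracking the at-most-three colours seen at each of $v,p,q,s$ and at a generic $w\in W$, and checking that exactly $G_2(n)$ and $G_3(n)$ survive.
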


Recall Theorem \ref{K3} and definition of $\operatorname{gr}_k(K_3:H)$, the structure of Gallai-coloring is important for $\operatorname{gr}_k(K_3:H)$.
Analogously, the structures of $k$-edge-colorings in Theorem \ref{P5} is important for $\operatorname{gr}_k(P_5:H)$.
However, there are five structures, and it is complicated if we consider the five structures at the same time.
Observe that the structures $(ii)-(iv)$ are simple since they contain a large monochromatic complete graph (the monochromatic complete graph can obtained by deleting at most three vertices), and structure $(v)$ is simple  since there are only five vertices.
Therefore, the structures $(i)$ is very important for $\operatorname{gr}_k(P_5:H)$.
Similarly, in Theorem \ref{K13}, the structures $G_1(n)$ and $(i)$ are important;
in Theorem \ref{P4+}, the structure $(i)$ is important and structures $G_2(n)$ and $G_3(n)$ are clear.
By above discussion, $G_1(n)$ and $(i)$ are key structures in $k$-edge-colored $K_n$ without rainbow $P_5,K_{1,3}$ or $P_4^+$.

Firstly, we observe the structure $(i)$.
Suppose $k\geq 3$.
It is clear that $K_n$ can be partitioned into $k-1$ parts $V_1,\cdots,V_{k-1}$ such that each edge between different parts is colored by $1$, and each edge of $G[V_{i-1}]$ is colored by either $1$ or $i$.
Therefore, $G[V_{i}]$ contains all edges of color $i+1$ and $|V_i|\geq 2$ for $i\in[k-1]$.
We denote the set of all such $k$-edge-colorings on $K_n$ by $\mathcal{B}_k(n)$.
\begin{definition}
Let $b_k(H)$ be the minimum number $n$ such that any edge-coloring $\Gamma\in \mathcal{B}_k(n)$ of $K_n$ contains a monochromatic $H$.
\end{definition}

For the structure $G_1(n)$, $V(K_n)$ can be partitioned into three parts $V_1,V_2,V_3$ as the definition, and at most one part is empty.
If there is an empty part, then $G_1(n)\in \mathcal{B}_3(n)$.
Thus, we only consider that $V_1,V_2,V_3$ are all nonempty sets.
We denote the set of all such 3-edge-colorings on $K_n$ by $\mathcal{T}(n)$.
\begin{definition}
Let $t(H)$ be the minimum number $n$ such that any edge-coloring $\Gamma\in \mathcal{T}(n)$ of $K_n$ contains a monochromatic $H$.
\end{definition}

\begin{remark}
In order to determine the $\operatorname{gr}_k(G:H)$, where $G\in \{P_5,K_{1,3},P_4^+\}$, the main work is to determine $b_k(H)$ or $t(H)$.
It is easy to verify whether $K_n$ contains a monochromatic copy of $H$ under the other structures.
\end{remark}

\subsection{Progress and our results}

Li et al. \cite{LWL} got some exact values of $\operatorname{gr}_k(P_5 : K_t)$ for small $t$ and bounds for general $t$. Recently, Wei et al. \cite{WHMZ22} got the exact values of $\operatorname{gr}_k(P_5 : H) \ (k \geq 3)$ for small $t$ and upper and lower bounds for general $t$, where $H$ is either a general fan or a general wheel graph.
The motivation of this paper is whether we can obtain exact values for $\operatorname{gr}_k(P_5 : H) \ (k \geq 4)$ such that $H$ is not just a graph but belongs to a class of graphs.

In this paper, we first study the new parameters $b_k(P_n), t(P_n),b_k(\widehat{K}_n)$ and $t(\widehat{K}_n)$, and further get the exact values of $\operatorname{gr}_k(G:P_n)$ and $\operatorname{gr}_k(G:\widehat{K}_n)$, where $G\in \{P_5,K_{1,3},P_4^+\}$.

We call a linear forest a {\em $k$-linear forest} if any component of the linear forest is a path of order at least $k$.
We first get a classical Ramsey number for $\widehat{K}_n$ and the set of $3$-linear forests, and the result is useful and will be used in Section \ref{sec-kipas}.
\begin{theorem}\label{class}
If $2\leq m\leq \frac{n}{2}$ and $\mathcal{L}$ is the set of linear forest $L$ of size at least $m$, 
then
$$
r(\widehat{K}_n,\mathcal{L})= n+ \lceil m/2\rceil.
$$
In addition, if $6\leq m\leq \frac{n}{2}$ and $\mathcal{L}'$ is the set of $3$-linear forest $L$ of size at least $m$, 
then
$$
r(\widehat{K}_n,\mathcal{L}')= n+ \lceil m/2\rceil.
$$
\end{theorem}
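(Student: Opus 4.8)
The plan is to establish the lower and upper bounds separately; the lower bound comes from one explicit colouring, while the upper bound is a structural argument on the blue graph built around a maximum blue linear forest. For the lower bound, fix a set $S$ of $\lceil m/2\rceil-1$ vertices in $K_{n+\lceil m/2\rceil-1}$, colour every edge meeting $S$ blue and every other edge red. The red graph is $K_n$ on the $n$ vertices outside $S$, which is too small to contain $\widehat K_n$ (an $(n{+}1)$-vertex graph); and $S$ is a vertex cover of the blue graph, so every blue linear forest $L$ has $\tau(L)\le|S|=\lceil m/2\rceil-1$, whereas a linear forest with $m$ edges and component sizes $a_1,\dots,a_c$ has $\tau=\sum_i\lceil a_i/2\rceil\ge\lceil m/2\rceil$. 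Hence there is no blue linear forest of size $\ge m$, and since a $3$-linear forest is in particular a linear forest the same colouring proves the lower bound for both $\mathcal L$ and $\mathcal L'$.

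For the upper bound with $\mathcal L$, put $N=n+\lceil m/2\rceil$ and suppose some $2$-colouring of $K_N$ has no blue linear forest of size $\ge m$. Take a blue linear forest $L^{*}$ with the maximum number of edges $\ell$ (so $\ell\le m-1$) and, subject to that, the fewest components $c$. Split $V(K_N)$ into $Z=V\setminus V(L^{*})$, the set $P$ of the $2c$ endpoints of the components of $L^{*}$, and the set $I$ of its $\ell-c$ internal vertices. Maximality of $\ell$ makes every edge inside $Z$ and every edge from $Z$ to $P$ red, and minimality of $c$ makes every edge between endpoints of distinct components red; as $m\le n/2$ forces $|Z|\ge2$, the red graph on $Z\cup P$ therefore contains a red Hamiltonian path, on $|Z|+2c=N-|I|$ vertices. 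If $|I|\le\lceil m/2\rceil-1$ then $|Z\cup P|\ge n+1$; picking $z\in Z$, a red Hamiltonian path of $(Z\cup P)\setminus\{z\}$ truncated to $n$ vertices is a red $P_n$ missing $z$, and since $z$ is red-adjacent to all of $Z\cup P$ this gives a red $\widehat K_n$.

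The hard case is $|I|\ge\lceil m/2\rceil$. Together with $\ell\le m-1$ this forces $c\le\lfloor m/2\rfloor-1$, so $L^{*}$ has few but long components, and no vertex of $I$ is automatically red-joined to $Z$. Here I would run a rotation--extension analysis on a longest component $P=p_1p_2\cdots$ of $L^{*}$: a rotation that replaces an edge $p_{i-1}p_i$ by a chord $p_1p_i$ produces another maximum blue linear forest with the same number of components and new endpoint $p_{i-1}$, and maximality then forbids any blue edge from $p_{i-1}$ to $Z$; iterating, the whole set of rotation endpoints is red-adjacent to $Z$, and by further exchanges one shows enough vertices of $I$ can be brought into the red clique/path structure built on $Z\cup P$ to reach $n$ vertices, after which a leftover vertex of $Z$ is the apex. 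Controlling this bookkeeping --- how much of $I$ is reachable when $L^{*}$ is one very long path versus several moderate paths, and checking the counts meet $n+\lceil m/2\rceil$ exactly --- is the main obstacle, and I expect this is precisely where $m\le n/2$ is needed. (An alternative is to observe that to force a blue member of $\mathcal L$ it suffices to force the cheapest one, reducing the upper bound to a classical two-graph Ramsey number $r(\widehat K_n,L_0)$ for a concrete small linear forest $L_0$; but this still requires essentially the same estimate.)

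For $\mathcal L'$ one repeats the upper-bound argument with $L^{*}$ a maximum blue $3$-linear forest (components of order $\ge3$). The only change is that the analysis may try to create or leave behind a blue component of order $2$; at such a point one must instead either merge two short components through a blue edge into a single path of order $\ge3$, enlarging the $3$-linear forest, or, if no such blue edge exists, absorb those vertices into the red clique. The hypothesis $m\ge6$ is what makes this merging step affordable (one needs a few edges of slack), and with it the same bounds hold, so $r(\widehat K_n,\mathcal L')=n+\lceil m/2\rceil$.
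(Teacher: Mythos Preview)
Your lower bound is correct and is the same construction the paper uses (phrased there as: each blue edge meets $S$, and each vertex of $S$ lies on at most two edges of a linear forest, so $e(L)\le 2|S|<m$).

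The upper bound, however, is incomplete. You handle the case $|I|\le\lceil m/2\rceil-1$ cleanly, but for the hard case $|I|\ge\lceil m/2\rceil$ you only sketch a rotation--extension scheme and explicitly concede that ``controlling this bookkeeping is the main obstacle''. That obstacle is genuine: rotations on the longest component do propagate red adjacencies from endpoints into some interior vertices, but there is no evident reason the \emph{number} of rotation endpoints must cover the shortfall $|I|-(\lceil m/2\rceil-1)$; for instance, a long blue path with very few blue chords admits very few rotations, yet nothing in your setup forbids this. Without a concrete mechanism that forces enough of $I$ into the red structure, this case is not proved, and the same gap recurs verbatim in your $\mathcal L'$ paragraph.

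The paper sidesteps this by proving a sharper structural statement directly (Lemma~\ref{lem-1}): if $K_{n+a}$ with $3\le a\le\lfloor n/4\rfloor$ has no red $\widehat K_n$, then it already contains a blue $3$-linear forest $L$ with $e(L)\ge 2a$. The argument does not use rotation--extension. It fixes a maximum blue $3$-linear forest, then classifies each component $D$ by its order: for $|D|\ge 6$ every edge from $D$ to $U=V\setminus V(L)$ is red, while for $3\le|D|\le 5$ there is at most one ``removable'' centre vertex $v\in D$ absorbing all blue $D$--$U$ edges, and $D-v$ carries a red Hamiltonian path. These red pieces are then threaded together through vertices of $U$ into a single red $P_n$, with a leftover vertex of $U$ as apex; a counting of removable vertices versus $a$ finishes the proof. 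The small cases $a\in\{1,2\}$ (i.e.\ $m\le 4$) are handled separately by an ad hoc lemma (Lemma~\ref{clm-0}). Taking $a=\lceil m/2\rceil$ then yields the upper bound for both $\mathcal L$ and $\mathcal L'$ simultaneously.
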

The Gallai-Ramsey numbers are list as follows.
\begin{theorem} \label{main-p5-paths}
If $n\geq k\geq 4$, then
$$
\operatorname{gr}_k(P_5:P_n)=\left\{
\begin{array}{ll}
n+1, &{\rm if}~2(k-1)\leq n\leq 4(k-2)+1,\\
\left\lceil\frac{3n-3}{2}\right\rceil, &{\rm if}~n>4(k-2)+1.\\
\end{array}
\right.$$
\end{theorem}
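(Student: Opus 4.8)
The plan is to use the structural Theorem~\ref{P5} to split a rainbow-$P_5$-free $k$-colouring of $K_N$ ($N\ge 5$) into the types $(i)$--$(v)$, and to prove matching lower bounds by explicit constructions. The heart of the matter is the ``dominant colour'' type $(i)$: after absorbing the colour-$1$-only vertices into one part it is a colouring in $\mathcal B_k(N)$, so this case is governed by $b_k(P_n)$, and I would separately establish that $b_k(P_n)=n$ when $2(k-1)\le n\le 4(k-2)+1$ and $b_k(P_n)=\lceil(3n-3)/2\rceil$ when $n>4(k-2)+1$; here I use the classical value $r(P_n,P_n)=n+\lfloor n/2\rfloor-1=\lceil(3n-3)/2\rceil$, which exceeds $n+1$ for $n\ge 6$. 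Granting these, the theorem follows: the small types $(ii)$--$(v)$ all force a monochromatic $P_n$ once $N\ge n+1$, while type $(ii)$ admits a good colouring of $K_n$, so $\operatorname{gr}_k(P_5:P_n)=\max\{b_k(P_n),\,n+1\}$, which is $n+1$ in the first range and $\lceil(3n-3)/2\rceil$ in the second.

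For the small types in the \emph{upper bound}: in $(ii)$ the monochromatic $K_N-a$ has $N-1\ge n$ vertices; in $(iii)$ the colour-$1$ graph contains $K_{N-1}$ minus one edge, which is traceable; in $(iv)$ it contains $K_N$ minus the edges of one fixed $K_4$, which is traceable since $N\ge n+1\ge 7$; and $(v)$ needs $N=5$, hence $n\le 4$, excluded. For type $(i)$, i.e. a colouring in $\mathcal B_k(N)$ with parts $V_1,\dots,V_{k-1}$ and $n_1=\max_i|V_i|$: the colour-$1$ graph contains the complete multipartite graph $K_{|V_1|,\dots,|V_{k-1}|}$, whose longest path has $\min\{N,\,2(N-n_1)+1\}$ vertices. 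Since $n_1\le N-2(k-2)$, a short parity-sensitive computation shows that for $N=n+1$ and $n\le 4(k-2)+1$ one gets $n_1\le\lfloor(n+3)/2\rfloor$, forcing a monochromatic $P_n$ in colour~$1$; this gives $b_k(P_n)\le n+1$ in the first range. In the second range I would instead merge colours $2,\dots,k$ into a single colour, producing a $2$-colouring of $K_{r(P_n,P_n)}$ which has a monochromatic $P_n$; if it lies in colour~$1$ we are done, and if it lies in the merged colour then, since the merged colour has no edges between the pairwise disjoint parts, the whole path lies inside one part $V_i$, where the merged colour equals the original colour $i+1$ --- so the original colouring already contains a monochromatic $P_n$, proving $b_k(P_n)\le r(P_n,P_n)$.

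For the \emph{lower bounds}: in the first range, colour $K_n-a$ with colour~$1$ and distribute colours $2,\dots,k$ among the $n-1\ge k-1$ edges at $a$; colour~$1$ then spans only $K_{n-1}$, the other classes are stars, and the colouring has type $(ii)$, so it has neither a monochromatic $P_n$ nor a rainbow $P_5$. In the second range, on $K_{\lceil(3n-3)/2\rceil-1}$, take a colouring in $\mathcal B_k$ whose first part has $n-1$ vertices inducing a monochromatic $K_{n-1}$ in colour~$2$, whose remaining $k-2$ parts have sizes $\ge 2$ summing to $\lfloor(n-2)/2\rfloor$ --- possible exactly because $n>4(k-2)+1$ --- each inducing a monochromatic clique in a distinct remaining colour, with all edges between parts in colour~$1$; then colour~$1$ is complete multipartite with largest part $n-1$ and longest path at most $n-1$, colour~$2$ spans $K_{n-1}$, the remaining colours span cliques on fewer than $n$ vertices, and any $P_5$ meets at most two parts and so sees at most three colours. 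Comparing the two pairs of bounds yields the values in the theorem.

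I expect the main obstacle to be the exact bookkeeping at the threshold $4(k-2)+1$: one must verify, separating the parities of $n$, that the complete-multipartite longest-path estimate in type $(i)$ is tight precisely at $n=4(k-2)+1$ with $N=n+1$, and that the construction for the second range is realisable with exactly $k$ colours on exactly $\lceil(3n-3)/2\rceil-1$ vertices (so that the leftover $\lfloor(n-2)/2\rfloor$ vertices really can be split into $k-2$ parts of size $\ge 2$). The colour-merging trick keeps the only genuinely large case --- the upper bound for type $(i)$ with $n$ large --- essentially routine, so the residual difficulty is this parity accounting, plus invoking the standard formula for the longest path of a complete multipartite graph.
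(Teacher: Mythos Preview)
Your plan is sound and follows the paper's outline: reduce via Theorem~\ref{P5}, observe that types $(ii)$--$(v)$ each yield a monochromatic $P_{N-1}$ in colour~$1$ once $N\ge n+1$, and handle type~$(i)$ by computing $b_k(P_n)$. The paper proves the sharper $b_k(P_n)=n$ in the first range (via Lemma~\ref{mutil-HC}), but your $b_k(P_n)\le n+1$ suffices since type~$(ii)$ already forces the answer up to $n+1$. The real divergence is in the upper bound $b_k(P_n)\le\lceil(3n-3)/2\rceil$ for $n>4(k-2)+1$: the paper's Lemma~\ref{bkpn} runs a case analysis on the size of the largest part $V_{k-1}$, invoking $r(P_n,P_m)$ and the linear-forest Ramsey number of Theorem~\ref{r-linear} to find a colour-$1$ path inside $V_{k-1}$ and then extend it through the other parts, with separate subcases according to the parity and size of $m$. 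Your colour-merging reduction bypasses all of this---since the merged colour has no inter-part edges, a merged-monochromatic $P_n$ is confined to a single $V_i$, where it is genuinely monochromatic in colour $i+1$---and is a genuine simplification. Your lower-bound construction (one part of size $n-1$ monochromatic in colour~$2$, the remaining $k-2$ parts each a clique in its own colour) is also simpler to analyse than the paper's, which keeps the small parts at size~$2$ and instead splits the big part into two subcliques in colour~$k$.

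One small error to repair: your justification ``any $P_5$ meets at most two parts'' is false---five vertices can easily touch three or more parts. The correct reason a $\mathcal{B}_k$ colouring has no rainbow $P_5$ is that at most one of the four edges can be colour~$1$, so at least three edges lie in pairwise distinct parts; but edges in distinct parts are vertex-disjoint, forcing at least six vertices. This is a one-line fix and does not affect the rest of your argument. The threshold bookkeeping you flag as the main obstacle is indeed routine: the bound $n_1\le N-2(k-2)$ combined with the multipartite longest-path formula gives exactly the inequality $2(N-n_1)+1\ge n$ for $N=n+1$ and $n\le 4(k-2)+1$, with the odd endpoint $4(k-2)+1$ being the tight case.
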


\begin{theorem} \label{main-p4+-paths}
If $n\geq k\geq 4$, then
$$
\operatorname{gr}_k(P_4^+:P_n)=\left\{
\begin{array}{ll}
n+2, &{\rm if}~k=4~{\rm and }~2(k-1)\leq n\leq 4(k-2)+1,\\
n,   & {\rm if}~k\geq 5~{\rm and }~ 2(k-1)\leq n\leq 4(k-2)+1,\\
\left\lceil\frac{3n-3}{2}\right\rceil, &{\rm otherwise}.\\
\end{array}
\right.$$
\end{theorem}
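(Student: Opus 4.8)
The plan is to classify the relevant colorings via Theorem~\ref{P4+} and then turn the problem into a count of monochromatic paths inside each admissible structure. For $k\ge4$, every exact $k$-coloring of $K_N$ with no rainbow $P_4^+$ lies, after renumbering, in $\mathcal{B}_k(N)$, or (only when $k=4$) equals $G_2(N)$ or $G_3(N)$. Hence $\operatorname{gr}_k(P_4^+:P_n)=b_k(P_n)$ when $k\ge5$, and $\operatorname{gr}_4(P_4^+:P_n)=\max\{b_4(P_n),\theta_2,\theta_3\}$ when $k=4$, where $\theta_2$ and $\theta_3$ are the least orders forcing a monochromatic $P_n$ in a coloring of the form $G_2(N)$, respectively $G_3(N)$. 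So the work breaks into computing $b_k(P_n)$, computing $\theta_2,\theta_3$, and then taking the maximum.

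The two auxiliary thresholds are quick. By Remark~\ref{remark-1}, in $G_2(N)$ and $G_3(N)$ every color class of index $\ge2$ is a star forest and so has no path on four vertices; thus for $n\ge4$ a monochromatic $P_n$ can only occur in color $1$. In $G_2(N)$ color $1$ spans a clique on $N-2$ vertices, giving $\theta_2=n+2$ with the explicit lower-bound coloring $G_2(n+1)$ of $K_{n+1}$ (whose color $1$ is $K_{n-1}$, hence $P_n$-free); in $G_3(N)$ color $1$ is $K_N$ with a triangle removed, which has a Hamiltonian path for $N\ge5$, so $\theta_3=\max\{n,5\}$.

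The heart of the matter is $b_k(P_n)$. Let $\Gamma\in\mathcal{B}_k(m)$ have parts $V_1,\dots,V_{k-1}$ with $|V_i|\ge2$, all cross edges of color $1$, and color $i+1$ confined to $V_i$; write $b=\max_i|V_i|$. Each color $i+1$ lives inside a clique on $|V_i|\le m-2(k-2)$ vertices, while the color-$1$ graph contains the complete multipartite graph $K_{|V_1|,\dots,|V_{k-1}|}$, whose longest path has $m$ vertices if $2b\le m+1$ and $2(m-b)+1$ vertices otherwise. So if $\Gamma$ has no monochromatic $P_n$ and $m\ge n$, then $b\le n-1$ and $2(m-b)+1\le n-1$, i.e.\ $b\ge m-\lfloor(n-2)/2\rfloor$. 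In the regime $2(k-1)\le n\le 4(k-2)+1$ one has $\lfloor(n-2)/2\rfloor<2(k-2)$, so $b\le m-2(k-2)<m-\lfloor(n-2)/2\rfloor$ is forced and the two requirements are incompatible: every $\mathcal{B}_k(m)$-coloring with $m\ge n$ has a monochromatic $P_n$ in color $1$, whereas $K_{n-1}$ has none, so $b_k(P_n)=n$. In the complementary regime $n>4(k-2)+1$ the two inequalities on $b$ are jointly satisfiable precisely when $m\le\lfloor(3n-4)/2\rfloor$; for such $m$ the extremal coloring takes $|V_1|=n-1$ with all internal edges of $V_1$ in color $2$ (so color $2$ induces $K_{n-1}$, which is $P_n$-free) and all other parts of size $2$, whence $b_k(P_n)=\lceil(3n-3)/2\rceil$. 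This last value can also be read off from Theorem~\ref{main-p5-paths}, since $\mathcal{B}_k$ is the only structure of Theorem~\ref{P5} whose threshold reaches $\lceil(3n-3)/2\rceil$. A routine monotonicity remark — deleting a vertex of a largest part turns a $\mathcal{B}_k(m+1)$-coloring into a $\mathcal{B}_k(m)$-coloring — shows that it is enough to verify the thresholds at a single $m$. Assembling: for $k=4$ and $6\le n\le9$ the maximum of $\{n,n+2,n\}$ is $n+2$; for $k\ge5$ and $2(k-1)\le n\le 4(k-2)+1$ it is $b_k(P_n)=n$; and otherwise $\lceil(3n-3)/2\rceil$ exceeds both $n+2$ and $n$, so it is the answer.

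The step I expect to be the main obstacle is the small-range upper bound in the computation of $b_k(P_n)$: one must rule out every choice of part sizes combined with every assignment of the internal edges of $V_i$ to color $i+1$ or to color $1$, and the whole dichotomy between the two regimes is governed by the single inequality $\lfloor(n-2)/2\rfloor<2(k-2)$ versus its reversal. Two small configurations also deserve separate care: the boundary case $n=2(k-1)$ with $k\ge5$, where $K_{n-1}$ admits no admissible $k$-coloring at all and the lower bound must be argued another way, and a handful of the smallest pairs $(k,n)$, where $G_2$-type colorings on very few vertices need checking by hand. Everything else — Remark~\ref{remark-1} and the longest-path formula for complete multipartite graphs — is standard.
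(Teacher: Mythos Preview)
Your reduction via Theorem~\ref{P4+} and your handling of $G_2(N)$ and $G_3(N)$ are correct and match the paper's approach exactly; the paper likewise reduces everything to the single quantity $b_k(P_n)$ (its Lemma~\ref{bkpn}) and then reads off the answer. Your argument for the first regime $2(k-1)\le n\le 4(k-2)+1$ is also fine: the two constraints $b\le m-2(k-2)$ and $b\ge m-\lfloor(n-2)/2\rfloor$ are incompatible there, and this is essentially the paper's use of Lemma~\ref{mutil-HC}.

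The gap is in the second regime. Your claim that ``if $\Gamma$ has no monochromatic $P_n$ then $b\le n-1$'' is not justified, and in fact it is false: inside the largest part $V_j$ the edges are $2$-colored in colors $1$ and $j{+}1$, so the correct bound from path--path Ramsey is only $b<r(P_n,P_n)=n+\lfloor n/2\rfloor-1$, not $b\le n-1$. The paper's own extremal construction for the lower bound (with $|V_{k-1}|$ of order roughly $3n/2-2(k-1)$ and the internal edges of $V_{k-1}$ split between two cliques $K_{n-1}$ and $K_{\lceil(n-4k+5)/2\rceil}$) already has $b$ far larger than $n-1$ while avoiding every monochromatic $P_n$. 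Consequently your two-inequality pinch, and with it your upper bound $b_k(P_n)\le\lceil(3n-3)/2\rceil$, collapses. The paper's Lemma~\ref{bkpn} handles this case by applying $r(P_n,P_m)$ (and, when $m$ is even, the linear-forest Ramsey number of Theorem~\ref{r-linear}) \emph{inside} the big part and then extending the short color-$1$ path using cross edges; this is the missing idea. Your fallback of reading the value off Theorem~\ref{main-p5-paths} does rescue the upper bound, since $\mathcal{B}_k$-colorings are rainbow-$P_5$-free, but then you should make that the primary argument and drop the $b\le n-1$ claim entirely. Also, for the lower bound in this regime your construction needs the non-$V_1$ parts to absorb $m-(n-1)=\lfloor(n-2)/2\rfloor$ vertices in total, not ``all other parts of size $2$''; as written your example only reaches $m=n+2k-5$, which is too small once $n>4k-5$.
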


\begin{theorem}\label{main-k13-paths}
$(1)$ If $n\geq 2(k-1)$ and $k\geq 4$, then
$$
\operatorname{gr}_k(K_{1,3}:P_n)=\left\{
\begin{array}{ll}
n, &{\rm if}~2(k-1)\leq n\leq 4(k-2)+1,\\
\left\lceil\frac{3n-3}{2}\right\rceil, &{\rm if}~n>4(k-2)+1.\\
\end{array}
\right.
$$

$(2)$ If $k=3$ and $n\geq 4$, then
$$
\operatorname{gr}_k(K_{1,3}:P_n)=\left\{
\begin{array}{ll}
\frac{3n}{2}-1, &{\rm if}~n\mbox{ is even},\\
\frac{3n-1}{2}, &{\rm if}~n\mbox{ is odd }.
\end{array}
\right.
$$
\end{theorem}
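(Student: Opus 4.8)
The plan is to use the structural description of $K_{1,3}$-rainbow-free colourings (Theorem~\ref{K13}) to reduce the computation of $\operatorname{gr}_k(K_{1,3}:P_n)$ to the auxiliary parameters $b_k(P_n)$ and $t(P_n)$, as advertised in the remark preceding the statement, and then to evaluate those parameters. \textbf{Reduction.} Fix a $k$-edge-colouring of $K_N$ using all $k$ colours with no rainbow $K_{1,3}$. If $k\ge 4$, Theorem~\ref{K13} forces Item $(i)$ of Theorem~\ref{P5}; since all $k$ colours occur, each of the $k-1$ parts hosting a colour $\ge 2$ has size at least $2$, so after renumbering this is exactly a colouring in $\mathcal{B}_k(N)$. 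If $k=3$, Theorem~\ref{K13} forces $G\cong G_1(N)$, which belongs to $\mathcal{B}_3(N)$ if one of its three parts is empty and to $\mathcal{T}(N)$ otherwise. Hence $\operatorname{gr}_k(K_{1,3}:P_n)=b_k(P_n)$ for $k\ge 4$ and $\operatorname{gr}_3(K_{1,3}:P_n)=\max\{b_3(P_n),t(P_n)\}$, so it remains to pin down these quantities (at the first value $n=2(k-1)$ in part (1) no avoiding colouring exists, and the answer $n$ is read off directly).

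\textbf{Evaluating $b_k(P_n)$.} For the upper bound I would merge colours: in any $\Gamma\in\mathcal{B}_k(N)$, identifying colours $2,\dots,k$ gives a $2$-colouring of $K_N$, and since every connected subgraph in the merged colour lies inside a single part $V_i$, a monochromatic $P_n$ in the merged colour is already a monochromatic $P_n$ in some colour $i+1$ of $\Gamma$. Thus $N\ge r(P_n,P_n)=n+\lfloor n/2\rfloor-1=\lceil (3n-3)/2\rceil$ forces a monochromatic $P_n$, so $b_k(P_n)\le\lceil (3n-3)/2\rceil$. When $n\le 4(k-2)+1$ one does better: in any $\Gamma\in\mathcal{B}_k(N)$ with $N\ge n$ the colour-$1$ graph contains the complete multipartite graph on the $k-1\ge 3$ parts, and if its largest part dominates it still has a path alternating through the other parts on at least $2\cdot 2(k-2)+1=4(k-2)+1\ge n$ vertices, while if no part dominates it has a Hamilton path; either way colour $1$ contains $P_n$, so $b_k(P_n)\le n$. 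For the matching lower bounds: if $n>4(k-2)+1$, colour $K_{\lceil (3n-3)/2\rceil-1}$ with one part of size $n-1$ whose internal edges get colour $2$ and the parts $V_2,\dots,V_{k-1}$ (each of size $\ge 2$) of total size $\lfloor n/2\rfloor-1$ with the internal edges of $V_j$ in colour $j+1$ — feasible since $\lfloor n/2\rfloor-1\ge 2(k-2)$ — and check no colour contains $P_n$; if $2(k-1)\le n\le 4(k-2)+1$, any $\mathcal{B}_k(n-1)$-colouring trivially avoids $P_n$. This gives part (1).

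\textbf{Evaluating $t(P_n)$ and finishing $k=3$.} The merging argument applied to $\mathcal{B}_3(N)$ (colours $2$ and $3$ are confined to $V_1$ and $V_2$), together with the extremal colouring for $r(P_n,P_n)$ placed inside one part, gives $b_3(P_n)=r(P_n,P_n)=n+\lfloor n/2\rfloor-1$. For $t(P_n)$, the lower bound comes from three parts of sizes as equal as possible (sizes $n/2,n/2-1,n/2-1$ for $n$ even, giving $N=\tfrac{3n}{2}-2$; sizes $\tfrac{n-1}{2}$ each for $n$ odd, giving $N=\tfrac{3(n-1)}{2}$), since each colour of a $\mathcal{T}$-colouring lives on the union of only two parts and hence on fewer than $n$ vertices; thus $t(P_n)\ge\tfrac{3n}{2}-1$ ($n$ even) and $t(P_n)\ge\tfrac{3n-1}{2}$ ($n$ odd). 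For the upper bound, order $|V_1|\ge|V_2|\ge|V_3|$; once $N$ reaches the claimed value a pigeonhole count on the three pair-sums $|V_i|+|V_j|$ shows some colour spans at least $n$ vertices, and one then analyses the complete bipartite graph sitting in that colour together with whatever colour-$1$, colour-$2$ or colour-$3$ edges are available inside the two parts involved in order to extract a $P_n$, the residual case (one very large part) being dispatched because the colour joining the two small parts is a near-balanced complete bipartite graph. Hence $\operatorname{gr}_3(K_{1,3}:P_n)=\max\{b_3(P_n),t(P_n)\}=t(P_n)$, which is the formula in part (2).

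\textbf{Main obstacle.} The delicate step is the exact value of $t(P_n)$, and to a lesser extent the harder side of the upper bounds in part (1): when the dominant colour's complete multipartite or bipartite subgraph just misses a $P_n$, one must splice in internal edges of the small parts, using the extra colour available there, to build a path on exactly $n$ vertices, and this forces a case analysis on $(|V_1|,|V_2|,|V_3|)$ together with the parity of $n$. The colour-merging reductions, the explicit extremal colourings, and the bookkeeping separating $n\le 4(k-2)+1$ from $n>4(k-2)+1$ are routine.
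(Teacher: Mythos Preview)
Your reduction to $b_k(P_n)$ for $k\ge 4$ and to $\max\{b_3(P_n),t(P_n)\}$ for $k=3$ is exactly the paper's route. Your treatment of $b_k(P_n)$ is in fact cleaner than the paper's Lemma~\ref{bkpn}: the colour-merging trick (identify colours $2,\dots,k$ and observe that a connected subgraph in the merged colour lies in a single $V_i$, hence is monochromatic in colour $i{+}1$) instantly yields $b_k(P_n)\le r(P_n,P_n)=\lceil(3n-3)/2\rceil$, whereas the paper first extracts a colour-$1$ or colour-$k$ path $P_m$ inside the largest part $V_{k-1}$ via Theorem~\ref{path-path} and then splices on an alternating path through $U=\bigcup_{i<k-1}V_i$, with a three-case analysis on the parity and size of $m$. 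Your lower-bound colouring (one part of size $n-1$ entirely in colour $2$, the remaining $\lfloor n/2\rfloor-1$ vertices split into parts of size $\ge 2$) is also simpler than the paper's, which embeds an extremal two-colouring inside $V_{k-1}$.

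The one genuine gap is the upper bound for $t(P_n)$. Your pigeonhole step is correct---some pair of parts has total size at least $n$---but ``analyse the complete bipartite graph together with whatever internal edges are available'' does not by itself produce a $P_n$: if $|V_1|+|V_2|\ge n$ with $|V_2|$ small and every internal edge of $V_1$ happens to be colour $3$ while every internal edge of $V_2$ is colour $2$, then colour $1$ is just $K_{|V_1|,|V_2|}$ and contains no $P_n$, and your fallback (``the colour joining the two small parts is a near-balanced complete bipartite graph'') fails too when $|V_2|+|V_3|<n$. The paper's Lemma~\ref{tpn} resolves this by applying a path--path Ramsey bound \emph{inside} the largest part: setting $a_1=n-2|V_2|$ and $a_2=|V_1|-\lfloor a_1/2\rfloor+1$, one checks $a_2\ge a_1\ge 2$ and $|V_1|\ge r(P_{a_1},P_{a_2})$, so $G[V_1]$ contains either a colour-$1$ $P_{a_1}$ or a colour-$3$ $P_{a_2}$; whichever appears is then extended to a monochromatic $P_n$ by appending an alternating path into $V_2$ (for colour $1$) or into $V_3$ (for colour $3$), with a small calculation to verify the lengths add up. This Ramsey-inside-$V_1$ step is the engine your sketch is missing.
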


\begin{remark}
We do not discuss $\operatorname{gr}_k(P_5:P_n),\operatorname{gr}_k(P_4^+:P_n)$
and $\operatorname{gr}_k(K_{1,3}:P_n)$ when $k\geq 4$ and $n<2(k-1)$, since it is easily to obtain.
In fact, if $k\geq 4$ and $n<2(k-1)$, then $K_n$ must contain a monochromatic $P_n$ when the edge-coloring $\Gamma$ of $K_n$ belongs to $\mathcal{B}_k(n)$.
However, other edge-colorings $G_2(N),G_3(N),(ii),(iii),(iv)$ and $(v)$ are simple.
\end{remark}

\begin{theorem}\label{main-kipas}
For $n\geq 5$, we have that $
\operatorname{gr}_3(K_{1,3}:\widehat{K}_n)=\left\lfloor\frac{5n}{2}\right\rfloor$ if $n$ is odd, and $\left\lfloor\frac{5n}{2}\right\rfloor-1\leq \operatorname{gr}_3(K_{1,3}:\widehat{K}_n)\leq \left\lfloor\frac{5n}{2}\right\rfloor$ if $n$ is even.
\end{theorem}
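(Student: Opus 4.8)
Since $k=3$, Theorem~\ref{K13} says that every $3$-edge-coloring of $K_N$ with no rainbow $K_{1,3}$ is, after renumbering the colors, of type $G_1(N)$; if one of its three parts is empty the coloring lies in $\mathcal{B}_3(N)$, and otherwise in $\mathcal{T}(N)$. Hence $\operatorname{gr}_3(K_{1,3}:\widehat K_n)=\max\{b_3(\widehat K_n),\,t(\widehat K_n)\}$, and it suffices to show that each of $b_3(\widehat K_n),\,t(\widehat K_n)$ equals $\lfloor 5n/2\rfloor$ when $n$ is odd and lies between $\lfloor 5n/2\rfloor-1$ and $\lfloor 5n/2\rfloor$ when $n$ is even. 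I will freely use that $\widehat K_n$ has $n+1$ vertices and contains both $P_n$ and $P_{n+1}$, so that a color class is $\widehat K_n$-free as soon as it is $P_n$-free; that $r(P_n,P_n)=n+\lfloor n/2\rfloor-1$ (Gerencs\'er--Gy\'arf\'as); the Erd\H{o}s--Gallai bound that a $P_n$-free graph on $h$ vertices has at most $(n-2)h/2$ edges; and Dirac-type facts on long paths in graphs of large minimum degree.

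\textbf{Lower bound (the construction).} To get $b_3(\widehat K_n)\ge\lfloor 5n/2\rfloor$ (resp.\ $\ge\lfloor 5n/2\rfloor-1$ when $n$ is even) I exhibit a $\Gamma\in\mathcal B_3(N)$ with $N=\lfloor 5n/2\rfloor-1$ and no monochromatic $\widehat K_n$. Take $|V_2|=n$ and color $K[V_2]$ entirely $3$; then color $3$ is a clique on $n$ vertices, hence $\widehat K_n$-free, and color $1$ has no edge inside $V_2$. Take $|V_1|=N-n$ (so $|V_1|=\tfrac{3(n-1)}2$ for odd $n$) and color $K[V_1]$ with colors $1$ and $2$ so that the color-$1$ subgraph of $V_1$ is a disjoint union of cliques each of order $\lfloor n/2\rfloor$ (three of them when $n$ is odd), the color-$2$ subgraph of $V_1$ then being complete multipartite with parts of order $\lfloor n/2\rfloor$. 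Then color $2$ has maximum degree $<n$, so admits no center of a $\widehat K_n$; and color $1$ is $K_{V_1,V_2}$ together with these disjoint small cliques inside $V_1$, so a center in $V_2$ sees a neighborhood inducing the disjoint cliques (longest path $\lfloor n/2\rfloor<n$), and a center $v\in V_1$ sees $V_2$ (independent in color $1$) completely joined to the small clique through $v$, whose longest path is $n-2<n$. So $\Gamma$ has no monochromatic $\widehat K_n$ and, being in $\mathcal B_3(N)$, no rainbow $K_{1,3}$. (For even $n$ one replaces the disjoint cliques by a $(\lfloor n/2\rfloor-1)$-regular $P_n$-free graph on $\lfloor 3n/2\rfloor-1$ vertices; such a graph fails to exist when $4\mid n$ for parity reasons, which is the source of the one-vertex gap in the statement.)

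\textbf{Upper bound.} I must show every $\Gamma\in\mathcal B_3(N)\cup\mathcal T(N)$ with $N=\lfloor 5n/2\rfloor$ has a monochromatic $\widehat K_n$. The engine is: if parts $V_i,V_j$ are completely joined in color $c$ and $|V_j|\ge\lceil n/2\rceil$, then for $v\in V_i$ the color-$c$ neighborhood of $v$ contains $V_j$, so if $v$ has at least $\lceil n/2\rceil$ color-$c$ neighbors inside $V_i$ the complete bipartite graph between them and $V_j$ already gives $P_n$ in color $c$, hence a color-$c$ $\widehat K_n$; and taking a center in $V_j$, the color-$c$ subgraph inside $V_i$ must be $P_n$-free. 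So, absent a monochromatic kipas, the color-$c$ subgraph inside $V_i$ has maximum degree $<\lceil n/2\rceil$ and is $P_n$-free, whence (by Erd\H{o}s--Gallai together with $r(P_n,P_n)$, and by pushing the minimum-degree / Dirac argument on the complement) $|V_i|\le\lceil 3n/2\rceil-1$, and at extremality the complement subgraph inside $V_i$ is forced to be (essentially) regular — it is this regularity forcing that blocks going past $\lfloor 5n/2\rfloor-1$. One then sums these per-part bounds over the two parts of $\mathcal B_3$ (or the three of $\mathcal T$, using the extra coupling coming from the cross-color), treating separately the unbalanced regimes in which some part has fewer than $\lceil n/2\rceil$ vertices: there one applies Theorem~\ref{class} to the union of two parts to pull out a long monochromatic linear forest inside one part and threads the vertices of the large partner part (all joined to it in that color) through the components to build $P_n$, keeping one spare vertex as the center.

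\textbf{Main obstacle.} The reduction and the construction are routine; the hard part is the upper bound, and inside it the genuinely delicate point is getting the constant down to exactly $\tfrac52$ rather than the $\approx 3$ that the naive per-part estimates yield. This forces one to exploit sharply the interaction between the cross color $1$ and the internal color classes — the "nearly regular complement" phenomenon and the zigzag paths that alternate between two completely joined parts — and to do so uniformly across all size regimes; handling $\mathcal T(N)$, where three such couplings interact at once, is the most involved piece. The same tightness issue is what prevents closing the case $n$ even: the extremal internal graph wants to be $(\lfloor n/2\rfloor-1)$-regular on $\lfloor 3n/2\rfloor-1$ vertices, impossible when $4\mid n$, and I do not see how to save the last vertex, so for even $n$ the argument only delivers the stated interval.
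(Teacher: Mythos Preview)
Your reduction to $\max\{b_3(\widehat K_n),t(\widehat K_n)\}$ is correct, and your lower-bound construction for $b_3$ is exactly the one the paper gives: one part of size $n$ monochromatic in color~$3$, the other split into three blocks of size about $\lfloor n/2\rfloor$ with color~$1$ inside blocks and color~$2$ between them. Your verification that it has no monochromatic $\widehat K_n$ is fine.

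The upper bound, however, is not proved; there is a genuine gap. Your ``engine'' gives, for a part $V_i$ whose partner has size at least $\lceil n/2\rceil$, only $|V_i|\le\lceil 3n/2\rceil-1$; summing over the two parts of $\mathcal B_3$ yields $N\le 3n-2$, not $\lfloor 5n/2\rfloor$, and you concede this. The sentence about ``regularity forcing'' explains why the \emph{construction} cannot be pushed further, not why every coloring on $\lfloor 5n/2\rfloor$ vertices must contain a kipas, so it contributes nothing toward the upper bound. You also have the role of Theorem~\ref{class} backwards: in the paper the linear-forest lemma is used precisely in the \emph{balanced} regime $|A|,|B|>n$, not the unbalanced one. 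There one writes $|A|=n+a$, $|B|=n+b$ with $a+b=\lfloor n/2\rfloor$, extracts a color-$1$ star $K_{1,b}$ inside $B$ (from $|B|\ge r(K_{1,b},\widehat K_n)$, Theorem~\ref{star-kipath}), extracts via Lemmas~\ref{clm-0} and~\ref{lem-1} a color-$1$ $3$-linear forest with at least $2a$ edges inside $A$, and then threads its components through the star leaves and additional $A$--$B$ edges to build a color-$1$ $P_n$ under the star's center; in the unbalanced regimes $|A|\le n$ one instead applies $|B|\ge r(P_n,\widehat K_n)$ or $|B|\ge r(K_{1,\lfloor n/2\rfloor},\widehat K_n)$ directly. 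Finally, the bound $t(\widehat K_n)\le\lfloor 5n/2\rfloor$ requires its own argument (the paper tracks the longest color-$2$ star in $C$ and color-$2$ path in $B$ and runs a separate case analysis on path--path and star--path Ramsey inequalities), none of which your outline supplies.
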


\section{Preliminaries}

As we know, the study of Gallai-Ramsey numbers deeply depend on the classical Ramsey number.
Therefore, we list some useful classical Ramsey numbers in this section, which can be found in the survey paper \cite{Radzi}.

\begin{theorem}{\upshape \cite{GG}}\label{path-path}
If $2\leq m\leq n$, then
$r(P_n,P_m)= n+\left\lfloor m/2\right\rfloor -1$.
\end{theorem}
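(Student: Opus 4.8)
The plan is to prove the equality by a matching lower‑bound colouring and an upper‑bound extremal argument; by symmetry of $r(\cdot,\cdot)$ we may assume $2\le m\le n$, so the target value is $N:=n+\lfloor m/2\rfloor-1$.

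For the lower bound I would colour $K_{N-1}$ as follows: partition the vertex set into $A$ with $|A|=n-1$ and $B$ with $|B|=\lfloor m/2\rfloor-1$, colour all edges inside $A$ red, and colour all remaining edges (inside $B$, and between $A$ and $B$) blue. The red graph is $K_{n-1}$ together with isolated vertices, so it has no red $P_n$. In the blue graph $A$ is an independent set, so on any blue path the vertices lying in $A$ are pairwise non‑consecutive; hence such a path has at most $2|B|+1=2\lfloor m/2\rfloor-1\le m-1$ vertices, and there is no blue $P_m$. Thus $r(P_n,P_m)\ge N$.

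For the upper bound, fix a red/blue colouring of $K_N$ with no blue $P_m$ and produce a red $P_n$. A convenient starting point is the folklore fact (provable by a one‑vertex induction) that the vertex set of any $2$‑coloured complete graph splits into a red path and a blue path; this already gives $r(P_n,P_m)\le n+m-1$, but it is short of the truth by an additive $\approx m/2$. To sharpen it I would pass to a longest red path $Q=x_1x_2\cdots x_q$: if $q\ge n$ we are done, so assume $q\le n-1$ and let $L=V(K_N)\setminus V(Q)$, so $|L|=N-q\ge\lfloor m/2\rfloor$. Maximality of $Q$ forces: (i) $vx_1$ and $vx_q$ are blue for all $v\in L$; and (ii) for each $1\le i\le q-1$ there is no $v\in L$ with both $vx_i,vx_{i+1}$ red, and no red edge $vv'$ inside $L$ with $x_iv$ and $x_{i+1}v'$ red, since any such pattern would let us splice $v$ (or $v,v'$) between $x_i$ and $x_{i+1}$ to obtain a red path longer than $Q$. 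In particular, by (ii) the red neighbours of each $v\in L$ form an independent set of the path $Q$, so each $v\in L$ has at least $\lfloor q/2\rfloor$ blue neighbours among $x_1,\dots,x_q$.

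The heart of the proof, and the step I expect to be the main obstacle, is to turn (i)–(ii) into a contradiction via a dichotomy on the blue structure available on $L$. On the one hand, if roughly $\lceil m/2\rceil$ vertices of $L$ can be chained pairwise through common blue neighbours — using $x_1$, $x_q$, the interior vertices of $Q$, and blue edges inside $L$ — then threading them yields a blue path on at least $m$ vertices, contradicting the hypothesis. On the other hand, if no such chaining is possible, then $L$ contains a large red clique; feeding this clique through a suitable edge $x_ix_{i+1}$ of $Q$ (using condition (ii) to rule out the forbidden patterns) produces a red path strictly longer than $Q$, hence eventually a red $P_n$. The delicate part is the bookkeeping: the chaining threshold $\lceil m/2\rceil$, the size of the red clique forced in $L$, and the bound $|L|\ge\lfloor m/2\rfloor$ must interlock so that one of the two outcomes is guaranteed exactly when $N=n+\lfloor m/2\rfloor-1$; this, together with the parity rounding captured in $2\lfloor m/2\rfloor-1\le m-1$, is what makes the constant sharp and is where the real work lies.
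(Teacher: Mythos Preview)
The paper does not prove this statement at all: Theorem~\ref{path-path} is quoted from Gerencs\'er and Gy\'arf\'as~\cite{GG} and used as a black box in the later arguments (e.g.\ in the proof of Lemma~\ref{bkpn}). So there is no proof in the paper to compare against; what you have written is an attempt at the classical 1967 result itself.

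Your lower-bound colouring is the standard one and is correct as written.

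Your upper bound, however, is not a proof but a programme, and you say so explicitly: the paragraph beginning ``The heart of the proof, and the step I expect to be the main obstacle\ldots'' names the crux and then leaves it open, describing only what each branch of the proposed dichotomy \emph{ought} to yield without showing that at least one branch must fire when $N=n+\lfloor m/2\rfloor-1$. In particular, ``if no such chaining is possible, then $L$ contains a large red clique'' is asserted, not argued, and even granting a red clique of size $t$ in $L$, feeding it through an edge $x_ix_{i+1}$ of $Q$ only gives a red path on roughly $t+\lceil q/2\rceil$ vertices, which contradicts maximality of $Q$ only when $t>\lfloor(q-1)/2\rfloor$; you have not tied this threshold to the failure of the blue chaining. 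The observations (i) and (ii) you extract from maximality of $Q$ are correct and are precisely the ones used in the classical argument, but the remaining ``bookkeeping'' you allude to is the entire content of the upper bound.

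For orientation, the standard route is more direct than a dichotomy: from a longest red path $Q=x_1\cdots x_q$ with $q\le n-1$ and $L=V\setminus V(Q)$ of size $s=N-q\ge\lfloor m/2\rfloor$, one shows that the blue graph contains a path on $2s+1\ge m$ vertices. Your count already gives that every $v\in L$ has more than $q/2$ blue neighbours on $Q$ (including both endpoints $x_1,x_q$), so any two vertices of $L$ share a blue neighbour on $Q$; the remaining work is to thread all of $L$ through \emph{distinct} connectors on $Q$, using the extra constraints forced by red edges inside $L$ via your observation~(ii). Once that lemma is in hand there is no case split and nothing further to balance.
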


The following theorem is a generalization of Theorem \ref{path-path}.
\begin{theorem}{\upshape\cite{FS}}\label{r-linear}
Suppose $L_1,L_2$ are $2$-linear forests having $j_1,j_2$ components of odd order, respectively.
Then $$r(L_1,L_2)=\max\left\{|L_1|+\left\lfloor\frac{|L_2|-j_2}{2}\right\rfloor-1,
|L_2|+\left\lfloor\frac{|L_1|-j_1}{2}\right\rfloor-1\right\}.$$
\end{theorem}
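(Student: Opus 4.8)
The plan is to prove matching lower and upper bounds. Write $N^\ast=\max\{\,|L_1|+\lfloor(|L_2|-j_2)/2\rfloor-1,\ |L_2|+\lfloor(|L_1|-j_1)/2\rfloor-1\,\}$, and by symmetry (swapping the two colours) assume the maximum is attained by the first term. I would first record the parity observation that $|L_i|-j_i$ is even, since $|L_i|\equiv j_i \pmod 2$ (each odd-order component contributes an odd summand to the total order), so every floor appearing in the statement is in fact an exact half-integer; this removes all rounding nuisance from the bookkeeping below. The common base case of the whole argument is the single-path-versus-single-path instance, which is exactly Theorem \ref{path-path}: for $L_i=P_{n_i}$ one has $\lfloor(n_i-j_i)/2\rfloor=\lfloor n_i/2\rfloor$, so the stated formula specialises correctly, and the dominant term keeps the longer path in the role of $L_1$.

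For the lower bound I would exhibit a red/blue colouring of $K_{N^\ast-1}$ with no red $L_1$ and no blue $L_2$. Partition the vertices into $A$ with $|A|=|L_1|-1$ and $B$ with $|B|=(|L_2|-j_2)/2-1$ (these sizes sum to $N^\ast-1$). Colour every edge inside $A$ red and every remaining edge (inside $B$, and between $A$ and $B$) blue. The red graph is $K_{|L_1|-1}$ together with isolated vertices, hence contains no subgraph on $|L_1|$ vertices and in particular no red $L_1$. The blue graph is the complete split graph $K_{|B|}\vee\overline{K_{|A|}}$, in which $A$ is an independent set; hence in any path component $P_\ell$ of a blue linear forest no two $A$-vertices are consecutive, which forces at least $\lfloor\ell/2\rfloor$ of its vertices to lie in $B$. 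Summing over the components of $L_2$ shows that a blue $L_2$ would need at least $\sum_i\lfloor\ell_i/2\rfloor=(|L_2|-j_2)/2$ vertices of $B$, one more than are available. Thus $r(L_1,L_2)\ge N^\ast$, and the colour-swapped colouring supplies the other term of the maximum.

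For the upper bound I would induct on the total number of components $c_1+c_2$, the base case $c_1=c_2=1$ being Theorem \ref{path-path}. In the inductive step at least one forest has two or more components; peel off its smallest component, a path $P_t$ removed from some $L_i$, leaving $L_i'$. Using the parity observation one checks that the formula value $N'$ of the reduced pair satisfies $N^\ast-t\le N'\le N^\ast-\lfloor t/2\rfloor$, so by induction the sub-clique on $N'$ vertices contains either a red $L_1$ or a blue $L_2$ (which ends the argument at once), or else a monochromatic copy of the reduced forest $L_i'$. In that remaining case $N^\ast-N'$ vertices, a number lying between $\lfloor t/2\rfloor$ and $t$, are still untouched, and the task is to complete the forest by producing an extra copy of $P_t$ in colour $i$, disjoint from the $L_i'$ already found.

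The main obstacle is precisely this completion step. The untouched vertices are by themselves too few to carry a $P_t$ (their number is at most $t$, and typically about $\lfloor t/2\rfloor$), so the extra path must be assembled by rerouting through vertices already used by $L_i'$. I would examine the edges between the leftover block and the embedded forest and argue a dichotomy: either these edges are blue often enough to augment the blue forest into a blue $L_2$, or they are red often enough to build, together with the red structure, a red $L_1$. Making this dichotomy quantitative — while keeping the component sizes and the odd/even counts $j_1,j_2$ balanced across the induction, and handling the delicate point that peeling a component from $L_1$ can switch which term of the maximum dominates (so that $N'$ sits at the lower end $N^\ast-t$) — is the technical heart of the proof. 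I expect to resolve it either by strengthening the induction hypothesis to guarantee a monochromatic path of controlled excess length on the leftover block, or by invoking the partition of a $2$-coloured complete graph into one red path and one blue path that underlies Theorem \ref{path-path}, and extracting the missing $P_t$ from the longer monochromatic segment.
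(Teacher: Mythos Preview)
The paper does not prove this theorem at all: Theorem~\ref{r-linear} is quoted from Faudree--Schelp \cite{FS} and used as a black box in the proof of Lemma~\ref{bkpn}. There is therefore no ``paper's own proof'' to compare against; your proposal must be judged on its own merits.

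Your lower bound is correct and is the standard construction. The parity remark that $|L_i|-j_i$ is always even is useful and makes the counting in the split-graph argument clean; the bound $\sum_i\lfloor\ell_i/2\rfloor=(|L_2|-j_2)/2>|B|$ is exactly right.

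The upper bound, however, is not a proof but a plan with an explicit hole. You yourself flag the completion step---producing the missing $P_t$ in colour $i$ disjoint from the already-embedded $L_i'$ using only about $\lfloor t/2\rfloor$ fresh vertices---as ``the technical heart of the proof'' and then defer it to two hoped-for mechanisms (a strengthened induction hypothesis, or a two-colour path partition). Neither is carried out, and this is not a routine detail: it is precisely where the Faudree--Schelp argument does real work. A naive dichotomy on the cross-edges fails because the leftover block can be as small as $\lfloor t/2\rfloor$, so even if every cross-edge is colour $i$ you cannot build a $P_t$ without reusing vertices of $L_i'$, and once you start rerouting through $L_i'$ you must show the rerouting does not destroy the copy of $L_i'$ you already have. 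There is also the case-split you mention but do not handle, where peeling a component flips which term of the maximum dominates; when that happens $N'=N^\ast-t$ and you have \emph{no} leftover vertices at all, so the argument as sketched gives nothing. Until this step is actually written out, the upper bound remains open in your write-up.
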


\begin{theorem}{\upshape \cite{Ptd}}\label{star-path}
$r(P_m,K_{1,n})= m+n-1$ if $n\equiv 1 \pmod {n-1}$, and $r(P_m,K_{1,n})\leq m+n-2$ otherwise.
\end{theorem}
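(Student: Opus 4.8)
The plan is to prove the two claims separately, using throughout the classical long‑path bound of Dirac: a connected graph $G$ on at least three vertices with minimum degree $\delta$ contains a path on at least $\min\{2\delta+1,|G|\}$ vertices. The single unifying observation is that a red/blue colouring of $K_N$ avoids a blue $K_{1,n}$ if and only if every blue degree is at most $n-1$, which is in turn equivalent to the red minimum degree being at least $(N-1)-(n-1)=N-n$. Thus each of the three steps reduces to turning a degree bound into a red $P_m$ (or, for the lower bound, exhibiting a $P_m$‑free red graph with all blue degrees small).

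First I would establish the universal upper bound $r(P_m,K_{1,n})\le m+n-1$. Take any colouring of $K_{m+n-1}$ with no blue $K_{1,n}$; then every red degree is at least $(m+n-2)-(n-1)=m-1$. A red component $C$ therefore has internal minimum degree $\ge m-1$, hence $|C|\ge m$, and Dirac's bound yields a red path on $\min\{2m-1,|C|\}\ge m$ vertices, i.e.\ a red $P_m$. Next, for the lower bound in the divisible case — when $(m-1)\mid(n-1)$, equivalently $n\equiv 1\pmod{m-1}$ — I would colour $K_{m+n-2}$ by taking the red graph to be a disjoint union of cliques each of order $m-1$; this partition exists precisely because $(m-1)\mid(m+n-2)$. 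No red component reaches $m$ vertices, so there is no red $P_m$, while each vertex has blue degree $(m+n-2)-(m-1)=n-1<n$, so there is no blue $K_{1,n}$. Combined with the previous step this gives $r(P_m,K_{1,n})=m+n-1$ in this case.

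The crux is the improved bound $r(P_m,K_{1,n})\le m+n-2$ when the divisibility fails. Consider $K_{m+n-2}$ with no blue $K_{1,n}$, so every red degree is at least $m-2$ (I assume $m\ge 3$; the small cases are immediate). Suppose, for contradiction, there is no red $P_m$, and inspect one red component $C$. Its internal minimum degree is still $\ge m-2$, so $|C|\ge m-1$. If $|C|\ge m$, Dirac's bound produces a red path on $\min\{2m-3,|C|\}\ge m$ vertices (using $2m-3\ge m$ for $m\ge 3$), a red $P_m$ — contradiction. Hence $|C|=m-1$, and minimum degree $m-2=|C|-1$ forces $C\cong K_{m-1}$. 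Therefore the red graph partitions the $m+n-2$ vertices into cliques of order $m-1$, which gives $(m-1)\mid(m+n-2)$, i.e.\ $n\equiv 1\pmod{m-1}$, contradicting our hypothesis. So a red $P_m$ must exist.

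I expect the third step to be the main obstacle: the universal upper bound and the lower‑bound construction are short, but ruling out the non‑divisible case requires the rigidity argument that converts the mere degree estimate $\delta_{\mathrm{red}}\ge m-2$ into an exact clique‑partition of the host graph. The decisive leverage there is Dirac's long‑path theorem, which collapses the two possibilities $|C|\ge m$ and $|C|=m-1$ so sharply that a $P_m$‑free red graph is forced to be a union of $K_{m-1}$'s, and hence to obey the divisibility that the hypothesis forbids.
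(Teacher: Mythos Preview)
The paper does not prove this theorem; it is quoted in the preliminaries section with a citation to Parsons \cite{Ptd}, so there is no in-paper argument to compare against. Your proof is correct and self-contained. You have also silently repaired the evident typo in the statement as printed --- the congruence should read $n\equiv 1\pmod{m-1}$, not $n\equiv 1\pmod{n-1}$ (the latter is vacuous) --- and your argument is consistent with the intended Parsons result.

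Two small remarks on presentation. First, the long-path bound you invoke (a connected graph with minimum degree $\delta$ contains a path on $\min\{2\delta+1,|G|\}$ vertices) is usually attributed to Dirac or to Erd\H{o}s--Gallai rather than to ``Dirac's theorem'' proper; it may be worth citing it explicitly. Second, in the rigidity step you use that every red neighbour of a vertex lies in the same red component, so the \emph{internal} minimum degree of each component is still at least $m-2$; this is implicit but could be stated once to make the $|C|=m-1\Rightarrow C\cong K_{m-1}$ conclusion airtight. Neither point is a gap in the mathematics.
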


\begin{theorem}{\upshape \cite{Hara}}\label{star-star}
If $m,n\geq 2$, then
$$
r(K_{1,n},K_{1,m})=\left\{
\begin{array}{ll}
m+n-1, &  \mbox{ if }m,n \mbox{ are even},\\
m+n, & \mbox{ otherwise}.\\
\end{array}
\right.
$$
\end{theorem}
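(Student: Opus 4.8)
The plan is to translate the star-Ramsey question into a statement purely about vertex degrees, then settle the upper bound by a double-counting/parity argument and the lower bound by an explicit regular-graph construction. Throughout I reserve red for $K_{1,n}$ and blue for $K_{1,m}$, and for a vertex $v$ in a $2$-colored $K_N$ I write $d_R(v)$ and $d_B(v)$ for its red and blue degrees, so that $d_R(v)+d_B(v)=N-1$.

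First I would observe that a red $K_{1,n}$ appears exactly when some vertex has $d_R(v)\ge n$, and a blue $K_{1,m}$ exactly when some vertex has $d_B(v)\ge m$. Hence a coloring of $K_N$ avoids both monochromatic stars iff every vertex satisfies $d_R(v)\le n-1$ and $d_B(v)\le m-1$. Summing these bounds and using $d_R(v)+d_B(v)=N-1$ gives $N-1\le (n-1)+(m-1)=m+n-2$, i.e.\ $N\le m+n-1$. Consequently every coloring of $K_{m+n}$ contains one of the two stars, which already proves $r(K_{1,n},K_{1,m})\le m+n$ in all cases.

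To sharpen this when $m$ and $n$ are both even, I would examine $K_{m+n-1}$. If a coloring avoids both stars, then $N-1=m+n-2=(n-1)+(m-1)$ forces equality in the two degree bounds at every vertex, so $d_R(v)=n-1$ for all $v$; that is, the red graph is $(n-1)$-regular on $m+n-1$ vertices. The handshake lemma then requires $(m+n-1)(n-1)$ to be even, but with $m,n$ even both factors are odd, a contradiction. Thus no avoiding coloring of $K_{m+n-1}$ exists, giving $r\le m+n-1$ in the even--even case. For the matching lower bounds I would use the standard fact that a $d$-regular graph on $N$ vertices exists whenever $0\le d\le N-1$ and $Nd$ is even. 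In the even--even case I color $K_{m+n-2}$ so the red graph is $(n-1)$-regular (here $N=m+n-2$ is even, so $(m+n-2)(n-1)$ is even and such a graph exists); then every blue degree equals $m-2\le m-1$, so neither star appears and $r\ge m+n-1$. When at least one of $m,n$ is odd, I color $K_{m+n-1}$ with an $(n-1)$-regular red graph; a short parity check shows $(m+n-1)(n-1)$ is even in each subcase ($n$ odd makes $n-1$ even; if $n$ is even then $m$ is odd and $m+n-1$ is even), so the construction exists, every vertex has red degree $n-1<n$ and blue degree $m-1<m$, and $r\ge m+n$.

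Combining the matching bounds in each parity class yields the stated formula. The argument is short, and the only genuine subtlety -- the hinge that separates $m+n-1$ from $m+n$ -- is the parity of the degree sum $(m+n-1)(n-1)$ read off from the handshake lemma. The main obstacle is therefore not conceptual but bookkeeping: correctly tracking the parities across the subcases and invoking the regular-graph existence fact with the right number of vertices and target degree in each.
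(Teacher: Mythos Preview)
Your argument is correct and is in fact the standard proof of this classical result. Note, however, that the paper does not supply its own proof of this theorem: it is listed in the preliminaries section with a citation to Harary \cite{Hara} and is used only as an input to later arguments. So there is nothing in the paper to compare your proof against; you have simply reproduced the well-known degree/parity argument that underlies the cited result.
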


Li et al. \cite{LZH} got a closing gap on path-kipas Ramsey numbers.
\begin{theorem}{\upshape \cite{LZH}}\label{path-kipath-1}
For $m\leq 2n-1$ and $m,n\geq 2$, we have
$$
r(P_n,\widehat{K}_m)=\max\left\{2n-1,\left\lceil\frac{3m}{2}\right\rceil-1,2\left\lfloor\frac{m}{2}\right\rfloor+n-2\right\}.
$$
\end{theorem}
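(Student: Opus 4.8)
The plan is to prove the two bounds separately, deriving the value $N^{\ast}:=\max\{2n-1,\lceil 3m/2\rceil-1,2\lfloor m/2\rfloor+n-2\}$ from matching constructions and a structural argument.

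For the lower bound $r(P_n,\widehat{K}_m)\ge N^{\ast}$, I would exhibit, for each of the three terms, a red/blue colouring of $K_{N^{\ast}-1}$ with neither a red $P_n$ nor a blue $\widehat{K}_m$; all three are of the same shape, namely the blue graph is complete multipartite (equivalently, the red graph is a disjoint union of cliques, one per part). Such a colouring has no red $P_n$ exactly when every part has at most $n-1$ vertices, and no blue $\widehat{K}_m$ exactly when, for each vertex $v$, the complete multipartite graph induced by the parts other than the one containing $v$ (which is precisely the blue neighbourhood $N_B(v)$) contains no $P_m$; since the longest path of a complete multipartite graph is straightforward to compute, this reduces to a size condition on the parts. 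Taking two parts of size $n-1$ gives the term $2n-1$; taking three nearly equal parts of size about $\lceil m/2\rceil-1$ gives $\lceil 3m/2\rceil-1$; and taking one part of size $n-1$ together with two parts of size $\lfloor m/2\rfloor-1$ gives $2\lfloor m/2\rfloor+n-2$. A short check of the longest-path values (using $m\le 2n-1$ to keep all parts below $n$) confirms each colouring is kipas-free, so $r(P_n,\widehat{K}_m)\ge N^{\ast}$.

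For the upper bound I would colour $K_{N^{\ast}}$ and assume, for contradiction, that there is neither a red $P_n$ nor a blue $\widehat{K}_m$. The first and cleanest reduction is a blue-degree bound: if some vertex $v$ has blue degree at least $r(P_n,P_m)=n+\lfloor m/2\rfloor-1$ (Theorem \ref{path-path}), then the complete graph on $N_B(v)$, being free of a red $P_n$, contains a blue $P_m$, and joining it to $v$ yields a blue $\widehat{K}_m$; hence every vertex has red degree at least $\delta_0:=N^{\ast}-n-\lfloor m/2\rfloor+1$. Since the red graph has no $P_n$, the classical bound of Erd\H{o}s and Gallai forces its average, hence minimum, degree to be at most $n-2$, i.e.\ $\delta_0\le n-2$; this already closes the extreme case $m=2n-1$ (where $N^{\ast}=3n-2$ exceeds the threshold $2n+\lfloor m/2\rfloor-2$) and, more generally, settles every instance with $N^{\ast}\ge 2n+\lfloor m/2\rfloor-2$.

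The remaining and main part of the work is the range $m\le 2n-2$, where $N^{\ast}$ lies strictly below this threshold and edge-counting no longer suffices. Here I would take a longest red path $P=x_1\cdots x_\ell$ (so $\ell\le n-1$) and set $R=V\setminus V(P)$ with $|R|=N^{\ast}-\ell$. Maximality of $P$ forces all edges from the endpoints $x_1,x_\ell$ to $R$ to be blue, and standard rotation/extension arguments supply further blue edges between $R$ and the end-segments of $P$; the goal is to assemble a blue $P_m$ inside the blue neighbourhood of one endpoint (or of a suitable off-path vertex), which then serves as the centre of a blue kipas. When $\ell$ is small, $|R|$ is large and Theorem \ref{path-path} applied inside $R$ already produces the blue $P_m$; the difficult regime is when $\ell$ is close to $n-1$, so that $|R|$ is just below $m$ and the blue $P_m$ must be routed partly through path-endpoints and off-path vertices. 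The main obstacle is exactly this routing, together with the floor/ceiling bookkeeping that matches the two parity-dependent terms $\lceil 3m/2\rceil-1$ and $2\lfloor m/2\rfloor+n-2$; I expect to organise it as a short case analysis on $\ell$ and on the blue degrees of the off-path vertices, each case ending either by exhibiting a blue $\widehat{K}_m$ directly or by extending the red path to a red $P_n$.
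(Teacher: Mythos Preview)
The paper does not give its own proof of this statement: Theorem~\ref{path-kipath-1} is quoted from \cite{LZH} and used only as a tool (in Case~1 of the proof of Lemma~\ref{kipas-1}), so there is nothing in the present manuscript to compare your argument against. If you want a genuine benchmark you would have to consult the original paper of Li, Zhang, Broersma and Holub.

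That said, a brief assessment of your plan on its own terms: the lower-bound constructions are standard and correct. For the upper bound, the blue-degree reduction via $r(P_n,P_m)$ is the natural opening move, but your edge-counting step (``Erd\H{o}s--Gallai forces minimum degree at most $n-2$'') is not quite right as stated: the Erd\H{o}s--Gallai theorem bounds the \emph{average} degree of a $P_n$-free graph, not its minimum degree, so you cannot conclude $\delta_0\le n-2$ directly; you would need an additional argument (or a different $P_n$-free structural lemma) to close even the ``easy'' range. The longest-red-path analysis you outline for the main range is the right shape, but be aware that in the original proof this case analysis is substantial --- your phrase ``I expect to organise it as a short case analysis'' is optimistic, and the parity bookkeeping that distinguishes $\lceil 3m/2\rceil-1$ from $2\lfloor m/2\rfloor+n-2$ is where most of the work lies.
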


Salman and Broersma \cite{SB} obtained the following result for path-kipas Ramsey numbers.
\begin{theorem}{\upshape \cite{SB}}\label{path-kipath-2}
If $n\geq 4$ and $m\geq 2n-2$, then
$r(P_n,\widehat{K}_m)\leq m+n-1$.
\end{theorem}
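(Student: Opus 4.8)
The plan is to fix a red/blue colouring of $K_N$ with $N=m+n-1$, assume there is no red $P_n$, and produce a blue kipas $\widehat{K}_m$. Throughout I would use that $m\geq 2n-2\geq n$, and the basic observation that a blue $\widehat{K}_m$ is exactly a \emph{hub} vertex $v$ together with a blue $P_m$ all of whose $m$ vertices lie inside the blue neighbourhood $N_B(v)$. I would then split the argument according to the maximum blue degree, with the natural threshold being $r(P_n,P_m)=m+\lfloor n/2\rfloor-1$ from Theorem \ref{path-path} (applied with $n\leq m$).

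\textbf{Case 1: some vertex $v$ has $|N_B(v)|\geq m+\lfloor n/2\rfloor-1$.} The induced colouring on the complete graph spanned by $N_B(v)$ still contains no red $P_n$, and $|N_B(v)|\geq r(P_n,P_m)$, so Theorem \ref{path-path} forces a blue $P_m$ inside $N_B(v)$. Taking $v$ as hub yields a blue $\widehat{K}_m$, and this case is finished.

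\textbf{Case 2: every vertex has blue degree at most $m+\lfloor n/2\rfloor-2$.} Then the red graph $R$ has minimum degree $\delta(R)\geq (N-1)-(m+\lfloor n/2\rfloor-2)=\lceil n/2\rceil$. The crux is to show that, since $R$ is $P_n$-free, this forces $R$ to split into several small dense pieces. Using the standard fact that a connected graph on $c$ vertices with minimum degree $\delta$ contains a path on at least $\min\{c,2\delta+1\}$ vertices, and that $2\lceil n/2\rceil+1\geq n+1$, the absence of a red $P_n$ forces every red component $C$ to satisfy $\lceil n/2\rceil+1\leq |C|\leq n-1$. Since $N=m+n-1>2(n-1)$, no union of at most two such components can cover all $N$ vertices, so $R$ has at least three components. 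Now I would fix one red component $C_1$ and a hub $v\in C_1$: every edge leaving $C_1$ is blue (red edges stay inside components), so $N_B(v)\supseteq V(K_N)\setminus C_1$, a set of $\nu:=N-|C_1|\geq m$ vertices meeting at least two red components. On $V(K_N)\setminus C_1$ the blue graph contains the complete multipartite graph whose parts are the remaining red components, each of size at most $n-1\leq \nu/2$ (here $\nu\geq m\geq 2n-2$ is exactly what is needed). A complete multipartite graph whose largest part has at most $\lceil\nu/2\rceil$ vertices has a Hamilton path, so the blue graph on $N_B(v)$ has a path on $\nu\geq m$ vertices and hence a blue $P_m$; with hub $v$ this is the desired blue $\widehat{K}_m$.

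The main obstacle is Case 2: one must rule out spending a large blue-degree budget while still dodging the kipas. The key realisation is that a small maximum blue degree is equivalent to a large red minimum degree, and a $P_n$-free graph of minimum degree $\lceil n/2\rceil$ is essentially a disjoint union of dense blocks of order between $\lceil n/2\rceil+1$ and $n-1$; the blue graph between these blocks is then complete multipartite and balanced enough — precisely because $m\geq 2n-2$ — to carry a long blue path anchored at any single hub. Verifying the two quantitative inputs, namely the minimum-degree path bound and the Hamilton-path criterion for complete multipartite graphs, together with the counting that guarantees at least three red components, completes the proof.
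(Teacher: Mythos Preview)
The paper does not prove this theorem; it is quoted from Salman and Broersma \cite{SB} as a known result, so there is no in-paper proof to compare against. Your argument, however, is correct and self-contained.

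Both cases go through as you describe. In Case~1 the appeal to $r(P_n,P_m)=m+\lfloor n/2\rfloor-1$ (Theorem~\ref{path-path}) is legitimate because $m\geq n$. In Case~2 your two ingredients are standard and hold exactly as stated: the fact that a connected graph with minimum degree $\delta$ contains a path on $\min\{|V|,2\delta+1\}$ vertices follows from the usual Pósa rotation, and with $\delta(R)\geq\lceil n/2\rceil$ this caps every red component at $n-1$ vertices. Since $N=m+n-1\geq 3n-3>2(n-1)$, there are at least three red components; removing one leaves a complete multipartite blue subgraph on $\nu\geq m\geq 2n-2$ vertices whose largest part is at most $n-1\leq\lceil\nu/2\rceil$, so Lemma~\ref{mutil-HC} supplies the required Hamilton path and hence the blue $P_m$ inside $N_B(v)$. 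The hypothesis $m\geq 2n-2$ is used precisely at this balance check, as you note.
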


Li et al. \cite{LZB} obtained an exact formula for all star-kipas Ramsey numbers.
\begin{theorem}{\upshape \cite{LZB}}\label{star-kipath}
Let $n,m\geq 2$ be two integers.

$(1)$ If $m\geq 2n$, then
$$
r(K_{1,n},\widehat{K}_m)=\left\{
\begin{array}{ll}
m+n-1, &  \mbox{ if }m,n \mbox{ are even},\\
m+n, & \mbox{ otherwise}.\\
\end{array}
\right.
$$

$(2)$ If $m\leq 2n-1$, then
$$
r(K_{1,n},\widehat{K}_m)=\left\{
\begin{array}{ll}
2n+\lfloor\frac{m}{2}\rfloor-1, &  \mbox{ if } m,\lfloor\frac{m}{2}\rfloor \mbox{ are even},\\
2n+\lfloor\frac{m}{2}\rfloor, & \mbox{ otherwise}.\\
\end{array}
\right.
$$
\end{theorem}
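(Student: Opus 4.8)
The plan is to reformulate both forbidden configurations as degree/neighbourhood conditions on the blue graph and then treat the two regimes $m\ge 2n$ and $m\le 2n-1$ separately. Write $B$ for the blue graph on the $N$ vertices of $K_N$. A red $K_{1,n}$ is absent precisely when the red maximum degree is at most $n-1$, i.e. when $\delta(B)\ge N-n$; and a blue $\widehat{K}_m$ is present precisely when some vertex $v$ admits a blue path on $m$ vertices inside its blue neighbourhood, that is, when $B[N_B(v)]$ contains $P_m$ for some $v$. Thus in every case I must show that $\delta(B)\ge N-n$ forces $P_m\subseteq B[N_B(v)]$ for some $v$, while the matching lower bound requires a colouring with $\delta(B)\ge N-n-1$ in which no blue neighbourhood contains $P_m$.

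For the regime $m\ge 2n$ (part (1)) the lower bound is immediate: since $K_{1,m}\subseteq\widehat{K}_m$ we have $r(K_{1,n},\widehat{K}_m)\ge r(K_{1,n},K_{1,m})$, and the extremal star--star colouring of Theorem~\ref{star-star} already avoids both a red $K_{1,n}$ and a blue $K_{1,m}$, hence a blue $\widehat{K}_m$; this yields exactly the claimed values. For the upper bound I would first use a parity count to locate a vertex $v$ of blue degree at least $m$: when $m,n$ are both even an $(m-1)$-regular blue graph on $m+n-1$ vertices cannot exist, so some blue degree reaches $m$, while in the remaining parities $\delta(B)\ge N-n\ge m$ already. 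Setting $S=N_B(v)$ with $|S|\ge m$, a short count gives $\delta(B[S])\ge|S|-n\ge|S|/2$ because $|S|\ge m\ge 2n$; Dirac's theorem then makes $B[S]$ Hamiltonian, so it contains $P_{|S|}\supseteq P_m$, and adding $v$ as centre produces the blue $\widehat{K}_m$.

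For the regime $m\le 2n-1$ (part (2)) the same reformulation applies but the neighbourhoods are comparatively small. Choosing $v$ of maximum blue degree gives $|S|=|N_B(v)|\ge N-n\ge n+\lfloor m/2\rfloor\ge m$ and $\delta(B[S])\ge|S|-n\ge\lfloor m/2\rfloor$. If $|S|\ge 2n$ then $\delta(B[S])\ge|S|/2$ and Dirac again gives a Hamiltonian path, hence $P_m$; if $B[S]$ is connected then the classical bound ``a connected graph has a path on $\min\{2\delta+1,|V|\}$ vertices'' yields a path on at least $\min\{2\lfloor m/2\rfloor+1,|S|\}\ge m$ vertices. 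The delicate case is when $B[S]$ is \emph{disconnected}: each component then has at least $|S|-n+1$ vertices and minimum degree $|S|-n$, forcing $|S|\le 2n-2$ and making the small components near-cliques whose longest path can be shorter than $m$. Here I would pass to the global structure, showing that a splitting of every blue neighbourhood into small cliques forces $B$ itself into a union of cliques of bounded size, and then count vertices to contradict $N=2n+\lfloor m/2\rfloor$ (or $N=2n+\lfloor m/2\rfloor-1$ in the both-even subcase).

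The matching lower bound for part (2) is the main obstacle, and I would build it from the complete bipartite blue graph $K_{n,n}$ (colour two $K_n$'s red and the edges between them blue), which realises $2n$ vertices with all blue neighbourhoods edgeless; the task is to adjoin $\lfloor m/2\rfloor-1$ further vertices so as to raise the count to $N-1$ while keeping every blue neighbourhood free of $P_m$ and every red degree at most $n-1$. I expect to achieve this by inserting a small blue clique, attached so that each relevant blue neighbourhood becomes a complete split graph with independent part of size $n$ and clique part of size at most $\lfloor m/2\rfloor-1$, whose longest path is exactly $2\lfloor m/2\rfloor-1<m$. The two parity regimes in each part would be reconciled at the end by the standard existence/non-existence of the relevant regular graphs (as in Theorem~\ref{star-star}), which accounts for the $\pm1$ discrepancies in the formulas; checking that the padded construction simultaneously meets $\delta(B)\ge N-n-1$ and the no-$P_m$-in-any-neighbourhood condition is where the real work lies.
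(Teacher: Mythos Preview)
This theorem is not proved in the present paper at all: it is quoted from \cite{LZB} in the Preliminaries section and used as a black box (only the inequality $|B|\ge r(K_{1,b},\widehat{K}_n)$ is invoked, in Case~3 of Lemma~\ref{kipas-1}). There is therefore no ``paper's own proof'' to compare your proposal against; any comparison would have to be with the original Li--Zhang--Broersma argument.

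Judged on its own, your part~(1) is essentially complete: the star--star lower bound transfers verbatim since $K_{1,m}\subseteq\widehat{K}_m$, and the upper bound via $\delta(B[S])\ge|S|-n\ge|S|/2$ plus Dirac is clean and correct. Part~(2), however, is only a plan. For the upper bound you correctly isolate the disconnected-$B[S]$ case as the obstacle, but ``pass to the global structure, showing that a splitting of every blue neighbourhood into small cliques forces $B$ itself into a union of cliques of bounded size'' is an assertion, not an argument, and it is precisely here that the work lies. For the lower bound your split-graph idea (independent part of size $n$, clique part of size $\lfloor m/2\rfloor-1$) is the right shape, but you have not verified that the adjoined vertices can be attached so that \emph{every} red degree stays below $n$ while \emph{every} blue neighbourhood still avoids $P_m$, nor have you carried out the parity bookkeeping that produces the $\pm1$. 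As it stands, part~(2) identifies the right ingredients but does not assemble them into a proof.
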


The following is a result about decomposition of the complete graph.
\begin{theorem}{\upshape \cite{L}}\label{signi}
For $n\geq1$, $K_{2n+1}$ can be decomposed into $n$ edge-disjoint Hamiltonian cycles; $K_{2n+2}$ can be decomposed into $n$ edge-disjoint Hamiltonian cycles and a perfect matching.
\end{theorem}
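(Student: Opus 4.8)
The plan is to use the classical rotational (Walecki) construction, exploiting the cyclic symmetry of $\mathbb{Z}_{2n}$ (respectively $\mathbb{Z}_{2n+1}$) together with one distinguished fixed vertex. I would treat the odd case $K_{2n+1}$ first and then reduce the even case to it.

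For $K_{2n+1}$ I would label the vertices by $\mathbb{Z}_{2n}\cup\{\infty\}$ and write down one explicit ``zigzag'' base cycle, namely
\[
H_0=\infty,\,0,\,1,\,2n-1,\,2,\,2n-2,\,3,\,\ldots,\,n,\,\infty,
\]
i.e.\ on the $2n$-gon one follows the Hamiltonian path $0,1,-1,2,-2,\ldots,n$ (indices mod $2n$) and then closes it through $\infty$. I then define $H_i$ to be the image of $H_0$ under the rotation $x\mapsto x+i$ on $\mathbb{Z}_{2n}$ with $\infty$ fixed, for $i=0,1,\ldots,n-1$. The first step is to record which edges $H_0$ uses: counting the consecutive differences along the zigzag shows that inside $\mathbb{Z}_{2n}$ the base cycle uses each nonzero ``difference'' $d\in\{1,\ldots,n-1\}$ exactly twice and the diameter difference $n$ exactly once, while its two spokes are $\{0,\infty\}$ and $\{n,\infty\}$.

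The main verification is that $H_0,\ldots,H_{n-1}$ are pairwise edge-disjoint and together exhaust $E(K_{2n+1})$. For the spokes this is immediate: the endpoints of $H_i$ are $\{i,\,n+i\}$, and as $i$ runs over $0,\ldots,n-1$ these pairs partition $\mathbb{Z}_{2n}$, so each edge $\{x,\infty\}$ is used exactly once. For the edges inside $\mathbb{Z}_{2n}$ one argues difference class by difference class: the single diameter edge of $H_0$ rotates to the $n$ distinct diameter edges (a perfect matching of $\mathbb{Z}_{2n}$), and for each $d<n$ the two difference-$d$ edges of $H_0$ must have base indices differing by $n$, so that their $n$ rotates tile the $2n$ edges of that class. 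Verifying this offset-by-$n$ property is the one genuine computation; it is exactly what the alternating structure of the zigzag guarantees and can be recorded by a short induction along the path. Since the edge counts already match ($n(2n-1)$ circle edges plus $2n$ spokes equal $\binom{2n+1}{2}$), edge-disjointness is equivalent to coverage, and the decomposition follows.

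For $K_{2n+2}$ I would reduce to the odd case. Take the decomposition $C_1,\ldots,C_n$ of $K_{2n+1}$ on the vertex set $V=\mathbb{Z}_{2n}\cup\{\infty\}$ just produced, and adjoin one new vertex $w$ together with the $2n+1$ spokes $\{w,v\}$, $v\in V$. In each $C_j$ delete its diameter edge $e_j=\{a_j,b_j\}$ and reinsert $w$ through the path $a_j,w,b_j$; this converts $C_j$ into a Hamiltonian cycle $C_j'$ of $K_{2n+2}$ using two new spokes and dropping one circle edge. Since the diameter edges of $C_1,\ldots,C_n$ are precisely the $n$ rotates of the diameter edge of $C_0$, they form a perfect matching of $\mathbb{Z}_{2n}$, hence a near-perfect matching of $V$ missing exactly the single vertex $v^\ast=\infty$. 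Consequently the leftover edges are $\{e_1,\ldots,e_n\}$ together with the one unused spoke $\{w,\infty\}$, and these $n+1$ edges form a perfect matching $M$ of $K_{2n+2}$. Thus $K_{2n+2}=C_1'\cup\cdots\cup C_n'\cup M$ is the desired decomposition. The hard part throughout is the bookkeeping at the two flagged places—the offset-by-$n$ claim for the difference classes in the odd case, and the fact that the diameter edges already assemble into a near-perfect matching in the even case—after which everything reduces to routine edge-counting.
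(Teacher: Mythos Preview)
Your proof is correct; it is exactly the classical Walecki rotational construction, and both the odd-case bookkeeping (each nonzero difference class is covered, the two length-$d$ edges of $H_0$ are offset by $n$ so their rotates tile the class) and the even-case reduction (detouring each cycle through the new vertex $w$ at its diameter edge, leaving those diameters plus $\{w,\infty\}$ as the perfect matching) go through as you describe.

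There is nothing to compare against, however: in the paper this theorem is quoted from Lucas's \emph{R\'ecr\'eations Math\'ematiques} as a known result and no proof is given. So your write-up is not an alternative to the paper's argument but rather a self-contained proof of a result the paper simply cites.
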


The following result will be used in the subsequent proofs, and we give a simple proof here, although it may be proved somewhere.
\begin{lemma}\label{mutil-HC}
Suppose that $G$ is a complete $r$-partite graph with parts $V_1,\ldots,V_r$ and $V_r$ is one maximum part.
\begin{enumerate}
  \item If $\sum_{i\in[r-1]}|V_i|\geq |V_r|$, then $G$ contains a Hamiltonian cycle.
  \item If $\sum_{i\in[r-1]}|V_i|=|V_r|-1$, $G$ contains a Hamiltonian path.
\end{enumerate}
\end{lemma}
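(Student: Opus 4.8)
The plan is to recast both parts as a statement about orderings of $V(G)$: because $G$ is complete multipartite, a cyclic order of $V(G)$ spans a Hamiltonian cycle, and a linear order spans a Hamiltonian path, exactly when no two consecutive vertices lie in the same part. Put $n=\sum_{i=1}^r|V_i|$ and $a=|V_r|=\max_i|V_i|$ (we may assume $r\ge 2$, as otherwise $G$ is edgeless and the hypotheses fail). First I would fix a ``good'' labelling $u_1,\dots,u_n$ of $V(G)$ in which the vertices of each part form a block of consecutive indices and the blocks are arranged in non-increasing order of size, so that $\{u_1,\dots,u_a\}$ is a maximum part. The single fact driving everything is then: if $u_p$ and $u_q$ lie in a common part, so do $u_p,u_{p+1},\dots,u_q$, and hence $|p-q|\le a-1$.

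For part (1) we have $a\le n-a$, i.e. $a\le\lfloor n/2\rfloor$. Write $h=\lceil n/2\rceil$ and consider the cyclic order obtained by interleaving the ``low block'' $u_1,\dots,u_h$ with the ``high block'' $u_{h+1},\dots,u_n$, namely $u_1,u_{h+1},u_2,u_{h+2},\dots$, closing back to $u_1$. Every consecutive pair of this cyclic order has index-gap $h$ or $h-1$, apart from one closing pair of large gap; since $a\le\lfloor n/2\rfloor$, in every case except $n$ even with $a=n/2$ we have $a-1<h-1$, so the fact above immediately puts the two endpoints of each such pair in different parts, and the ordering is a Hamiltonian cycle. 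For part (2) we have $n=2a-1$, so $a=\lceil n/2\rceil$ is strictly larger than every other part size; thus the maximum part is exactly $\{u_1,\dots,u_a\}$, and I would interleave it with the remaining $a-1$ vertices so that the ordering begins and ends inside $V_r$: $u_1,u_{a+1},u_2,u_{a+2},\dots,u_{a-1},u_{2a-1},u_a$. Now every consecutive pair has exactly one endpoint of index $\le a$ (in $V_r$) and one of index $>a$ (outside $V_r$), so consecutive vertices are in different parts and the ordering is a Hamiltonian path.

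The one delicate step — and the place I expect to spend the real work — is the boundary case of part (1) with $n$ even and $a=n/2$, where the gap-$(h-1)$ consecutive pairs $\{u_{h+i},u_{i+1}\}$ are not immediately handled by the counting above. Here I would argue that such a pair lying in one part forces that part to be exactly the interval $\{u_{i+1},\dots,u_{h+i}\}$, of size $h=n/2$; but the first block is a maximum part, so it too has size $\ge n/2$, forcing $G\cong K_{n/2,n/2}$ with the two parts equal to $\{u_1,\dots,u_h\}$ and $\{u_{h+1},\dots,u_{2h}\}$ — and neither of these equals $\{u_{i+1},\dots,u_{h+i}\}$ for any $1\le i\le h-1$, a contradiction. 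A short, strictly easier variant of the same counting disposes of the closing pair of the cyclic order, and no such care is needed when $n$ is odd (there $a\le h-1$). As an alternative route, one could instead appeal to the Chv\'{a}tal--Erd\H{o}s theorem together with $\alpha(G)=a$ and $\kappa(G)=n-a$, which gives a Hamiltonian cycle when $n-a\ge a$ and a Hamiltonian path when $n-a\ge a-1$; but I prefer the self-contained interleaving construction above.
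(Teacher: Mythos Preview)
Your proof is correct and takes a genuinely different route from the paper's. The paper argues by induction on $|G|$: it peels off one vertex from each part of maximum size, applies the induction hypothesis to the smaller complete multipartite graph to get a Hamiltonian cycle there, and then splices the removed vertices (which induce a clique or an edge) back into that cycle along a suitably chosen pair of consecutive edges; part~(2) is reduced to part~(1) by the same peeling trick. Your argument is instead a one-shot explicit construction: order the vertices so that parts are consecutive blocks of non-increasing size, interleave the low and high halves, and observe that consecutive indices in the resulting (cyclic or linear) order differ by at least $\lceil n/2\rceil-1\ge a-1$, which forces them into different parts; the only case needing extra care, $n$ even with $a=n/2$, you dispatch by showing the offending pair would force $G\cong K_{n/2,n/2}$ and then deriving a contradiction from the block structure.

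What each approach buys: your interleaving is entirely self-contained, avoids induction, and produces the Hamiltonian cycle or path explicitly in one line; it also makes transparent that the obstruction is purely the size of the largest part. The paper's inductive peeling is slightly more flexible in that it never needs a case split on parity or on $a=n/2$, and its mechanism (remove one vertex from each maximum part, recurse, reinsert) generalises more readily to related extremal questions. Your aside about Chv\'atal--Erd\H{o}s ($\alpha(G)=a$, $\kappa(G)=n-a$) is also a valid and even shorter alternative, at the cost of invoking an external theorem. A small cosmetic point: in your boundary case you write that the first block ``has size $\ge n/2$'', but in fact it has size exactly $a=n/2$; this is what lets you conclude that the two size-$n/2$ parts already exhaust $V(G)$.
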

\begin{proof}
The result is true for bipartite graphs.
So, suppose $r\geq 3$.
Without loss of generality, let $|V_1|\leq |V_2|\leq\ldots\leq |V_r|$. Then there exists an $s\in[r]$ such that $|V_{s-1}|<|V_s|=\ldots=|V_t|$.
The proof proceeds by induction on $|G|$.
If $|G|=3$, then the result holds obviously.
If $|V_r|=1$, then $G$ is a complete graph and the result follows.
So, suppose that  $|G|\geq 4$ and $|V_r|>1$.
Choose a vertex $v_i$ from $V_i$ for $i\in[s,r]$.
Let $U_i=V_i$ for $i<s$ and $U_i=V_i-v_i$ for $i\in[s,r]$, and let $G'=G-\{v_s,\ldots,v_r\}$.
Then $U_r$ is a maximum part in $G'$.
Since $|V_i|\geq 2$, each $U_i$ is nonempty.

We prove the first statement. Since $\sum_{i\in[r-1]}|V_i|\geq |V_r|$, it follows that $\sum_{i\in[r-1]}|U_i|\geq |U_r|$. By induction, $G'$ contains a Hamiltonian cycle $C$.
Note that $v_s,\ldots,v_r$ induces a complete graph.
Since $r\geq 3$, there are three integer $i,j,k$ such that there are two adjacent edges $xy$ and $yz$ of $G$ with $x\in V_i$, $y\in V_j$ and $z\in V_k$.
If $r=s$, then we can assume that $r\notin\{i,j\}$, and hence $(C\backslash\{xy\})\cup\{xv_r,yv_r\}$ is a Hamiltonian cycle of $G$.
If $s<r-1$ (resp. $s=r-1$), then there is a cycle $C'=v_sv_{s+1}\ldots v_rv_s$ in $G$ (resp. an edge $v_{r-1}v_r$ of $G$).
There is an integer of $i,j,k$ which is not in $\{r-1,r\}$ (say $i\notin\{r-1,r\}$), and one of $r-1,r$ is not equal to $j$ (without loss of generality, suppose that $j\neq r$). 
Then $(C\backslash\{xy\})\cup (C'\backslash\{v_rv_{r-1}\})\cup\{xv_{r-1},yv_r\}$ (resp. $(C\backslash\{xy\})\cup\{v_{r-1}v_r,xv_{r-1},yv_r\}$) is a Hamiltonian of $G$.

Now we prove the second statement.
In this case, $s=r$ and $\sum_{i\in[r-1]}|U_i|=|U_r|$.
So, $G'$ has a Hamiltonian cycle $C^*$ by the first result.
Let $ww'\in E(C^*)$ and suppose that $w\notin V_r$ by symmetry.
Then $(C^*\backslash\{ww'\})\cup\{wv_r\}$ is a Hamiltonian path of $G$.
\end{proof}

\section{Technique results and the proof of Theorem \ref{class}}

In this section, we prove two critical lemmas, which will be used in Section \ref{sec-kipas}.
\begin{lemma}\label{clm-0}
Suppose that $n\geq 4$, $a\in\{1,2\}$ and $N=n+a$.
If $\Gamma$ is a red/blue edge-coloring of $K_N$ such that $K_N$ does not contain a red copy of $\widehat{K}_n$, then the following results hold.

$(i)$ If $a=1$, then $K_N$ contains either a blue copy of $2P_2$ or a blue copy of $P_3$.

$(ii)$ If $a=2$ and $n\geq 5$, then $K_N$ contains either a blue copy of $2P_3$, or a blue copy of $P_5$, or a blue copy of vertex-disjoint $P_4$ and $P_2$.
\end{lemma}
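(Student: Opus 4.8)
The plan is to proceed by contradiction: assume $K_N$ (with $N=n+a$) contains neither a red $\widehat{K}_n$ nor the required blue configuration, and derive a contradiction by building a red $\widehat{K}_n$ explicitly. The key observation is that a red $\widehat{K}_n$ consists of a red center vertex $v$ together with a red $P_n$ in $N_{\text{red}}(v)$ (with every edge from $v$ to that path red). So the strategy in both parts is: first locate a vertex $v$ whose red neighborhood is large enough (nearly all of $V(K_N)$), then find a red Hamiltonian-type path inside that red neighborhood. The assumption that the blue graph avoids the stated small configuration is exactly what forces the blue graph to be sparse and highly structured, which is what lets both steps go through.

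For part $(i)$, where $a=1$ and $N=n+1$: if $K_N$ has no blue $2P_2$ and no blue $P_3$, then the blue edge set is either empty or a single edge (a matching of size $\le 1$ with no two independent edges and no path on three vertices forces $|E_{\text{blue}}|\le 1$). If the blue graph is empty, $K_N=K_{n+1}$ is all red and trivially contains a red $\widehat{K}_n$ (pick any vertex as center and a Hamiltonian path of $K_n$ on the rest). If the blue graph is one edge $xy$, take any vertex $v\notin\{x,y\}$ (possible since $N=n+1\ge 5$); then $v$ sees everything in red, and $K_N-v$ has $n$ vertices with its only possible blue edge being $xy$, so $K_N-v$ minus that edge still has a red Hamiltonian path by Lemma~\ref{mutil-HC} (or trivially, since $K_n$ minus one edge is Hamiltonian-connected for $n\ge 4$). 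That red $P_n$ together with $v$ gives a red $\widehat{K}_n$, a contradiction.

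For part $(ii)$, where $a=2$, $n\ge 5$, $N=n+2$: assume the blue graph $B$ contains no $2P_3$, no $P_5$, and no disjoint $P_4\cup P_2$. I would first analyze the structure of such a $B$ on $N\ge 7$ vertices. The absence of $P_5$ means each component has a short "diameter"; the absence of $2P_3$ means at most one component has $\ge 3$ vertices (any two disjoint $P_3$'s are forbidden), and the no-$P_4\cup P_2$ condition then rules out a long path coexisting with any extra edge. The upshot should be that $B$ is one of a few simple types: (a) a star $K_{1,t}$ plus isolated vertices; (b) a triangle plus isolated vertices; (c) a matching of size $\le 2$ (which actually is covered by the "no $2P_3$" already forcing at most... — here one must be a little careful, since $2P_2$ is allowed); (d) a "double star" $P_4$ or a small bounded-size graph. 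In each case I want to find a center vertex $v$ that is red-incident to all but at most one other vertex, and then show the remaining $n$ or $n+1$ vertices (minus $v$) contain a red $P_n$ avoiding the few blue edges, invoking Lemma~\ref{mutil-HC} on the complement structure: the red graph restricted there is a complete graph minus a bounded matching/star, which is Hamiltonian (or Hamiltonian-connected) for $n\ge 5$.

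The main obstacle I anticipate is the case analysis in part $(ii)$ when the blue graph is a matching of size exactly $2$, say $x_1y_1$ and $x_2y_2$: then no single vertex outside all four endpoints may be red-adjacent to enough vertices if we are not careful, and we must instead pick $v$ cleverly — e.g. $v=x_1$, whose only blue edge is $x_1y_1$, so $v$ is red-adjacent to all of $V\setminus\{y_1\}$, giving a candidate red neighborhood of size $n$ exactly; inside it the blue edges are just $x_2y_2$, and $K_n$ minus one edge has a red $P_n$ for $n\ge 4$, done. The other delicate case is the blue triangle $\{x_1,x_2,x_3\}$: pick $v\notin\{x_1,x_2,x_3\}$ (possible since $N\ge 7$), which is red-complete to everything, and $K_N-v$ has only the triangle in blue, so deleting those three edges still leaves a red Hamiltonian path on $n+1\ge 6$ vertices by Lemma~\ref{mutil-HC}. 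So the real work is just enumerating the $B$-structures correctly and confirming each yields a red $\widehat{K}_n$; once the structural classification of $B$ is pinned down, each subcase is a short application of the Hamiltonicity lemma.
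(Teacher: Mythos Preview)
Your overall strategy matches the paper's exactly: assume the blue graph $B$ avoids the stated configurations, pin down the structure of $B$, and then exhibit a red $\widehat{K}_n$ via a center vertex with large red neighborhood plus Dirac/Hamiltonicity inside that neighborhood. Part~(i) is fine; your argument is actually cleaner than the paper's (which splits on $\delta(B)$ instead).

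For part~(ii), however, your structural list (a)--(d) is genuinely incomplete. ``No $2P_3$'' does force at most one component $L$ of order $\ge 3$, but the remaining components can be edges, not just isolated vertices, provided $L$ contains no $P_4$; so ``star $+$ matching'' and ``triangle $+$ matching'' are both legitimate possibilities you have not listed. More importantly, when $L$ does contain a $P_4$ (so all other components are trivial), $L$ is not restricted to being a $P_4$ or a double star: the configurations $C_4$, $K_4^-$, $K_4$, and $K_{1,k}^+$ (a star with one extra edge between two leaves) all avoid $P_5$, $2P_3$, and $P_4\cup P_2$ internally and must be handled. Each of these is easily dispatched by your Dirac-style endgame, but they need to be on the list. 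The paper sidesteps this enumeration by organizing the analysis differently: it first treats the case where $B$ has at most one vertex of degree $\ge 2$, and otherwise picks two such vertices $z_1,z_2$ and classifies the subgraph $E_{z_1}\cup E_{z_2}$, which falls into exactly the types double star, $K_{1,k}^+$, $C_4$, $K_4^-$, $K_4$ (or already contains a forbidden pattern). That degree-based split is tidier than the component-based one and avoids missing cases; you may want to adopt it when filling in the details.
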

\begin{proof}
Suppose that $R,B$ are the graphs with the common vertex set $V=V(K_N)$ and their edges sets are red edges and blue edges, respectively.
Note that $R$ does not contains a copy of $\widehat{K}_n$.
For convenience, we use $P_2\cup P_4$ to denote vertex-disjoint $P_4$ and $P_2$ throughout this proof.

$(i)$ For $a=1$, if $\delta(B)\geq 1$, then $B$ contains either a $2P_2$ or a $P_3$, since $n\geq 4$. If $\delta(B)=0$, then there exists a vertex in $B$ such that $d_{B}(v)=0$, and hence $R-v$ does not contain a Hamiltonian path (otherwise there is a red $\widehat{K}_n$, a contradiction).
Suppose that $P$ is the maximum path of $R-v$ with endpoints $a,b$.
Then $V(R)-\{v\}-V(P)\neq \emptyset$.
By the maximality of $P$, we have $ua,ub\in E(B)$ for each $u\in V(R)-\{v\}-V(P)$, and hence $B$ contains a $P_3$.

$(ii)$ 
For $a=2$, suppose to the contrary that $B$ does not contain $2P_3$, $P_5$ or $P_4\cup P_2$.
We first assume that $B$ has at most one vertex of degree greater than one (if the vertex exists, denote it by $w$). It is clear that $|B-w|=n+1$ and $\Delta(B-w)\leq 1$ if $w$ exists. If $w$ exists and $B-w$ contains a perfect matching $M$, then there is a blue $P_5$ with center $w$. 
Otherwise, there are two vertex $u,v$ of $K_N$ such that $u$ is an isolated vertex of $B-v$ and $\Delta(B-v)\leq 1$.
In fact, if $w$ exists and $B-w$ does not contain a perfect matching $M$, then there is an isolated vertex $u$ of $B-w$ (we can see $w$ as $v$) and $\Delta(B-v)\leq 1$;
if $w$ does not exist, then $\Delta(B-w)\leq 1$, and for an edge $xy$ of $B$, $x$ is an isolated vertex of $B-y$ (we can see $x,y$ as $u,v$, respectively).
Hence, the vertices $u,v$ are obtained. Let $V'=V-\{u,v\}$ and $R'=R-\{u,v\}$. Then $u\vee V'$ is a red star. Since $\Delta(B-\{u,v\})\leq 1$, it follows that $\delta(R')\geq|V'|-2$.
Since $|V'|=n\geq 5$, by Dirac's Theorem, $R'$ contains a Hamiltonian path $P$.
Hence, $u\vee P$ is a red $\widehat{K}_n$, a contradiction. 

Now we assume that there are at least two vertices, say $z_1,z_2$, of degree at least two in $B$. Without loss of generality, we can choose $z_1,z_2$ such that
$d_{B}(z_1)+d_{B}(z_2)$ is as large as possible.
We use $E_u$ to denote the graph induced by blue edges incident with $u$.
If $N_B(z_1)\cap N_B(z_2)=\emptyset$, then $E_{z_1}\cup E_{z_2}$ contains a $2P_3$, unless $E_{z_1}\cup E_{z_2}$ is a double star.
If $|N_B(z_1)\cap N_B(z_2)|=1$, then $E_{z_1}\cup E_{z_2}$ contains a $P_5$, unless $E_{z_1}\cup E_{z_2}$ is a graph obtained form $K_{1,k}$, $k\geq 2$, by adding an edge of $\overline{K_{1,k}}$ (the resulting graph is denoted by $K_{1,k}^+$). 
If $|N_B(z_1)\cap N_B(z_2)|=2$, then $E_{z_1}\cup E_{z_2}$ contains a $P_5$, unless $E_{z_1}\cup E_{z_2}$ is either a $K_4$, or a $K_4^-$, or a $C_4$, where $K_4^-$ is a graph obtained from $K_4$ by deleting an edge.
If $|N_B(z_1)\cap N_B(z_2)|=3$, then $E_{z_1}\cup E_{z_2}$ contains a $P_5$.
Recall that $B$ does not contain $2P_3,P_5$ or $P_2\cup P_4$ by assumption.
Hence, either $E_{z_1}\cup E_{z_2}$ is a double star or $E_{z_1}\cup E_{z_2}\in \{K_{1,k}^+,K_4,K_4^-,C_4\}$.
For convenience, let $\widetilde{H}$ denote the subgraph of $B$ induced by $E_{z_1}\cup E_{z_2}$.
Then either $\widetilde{H}$ is a double star or $\widetilde{H}\in \{K_{1,k}^+,K_4,K_4^-,C_4\}$.

If $\widetilde{H}\in \{K_4,K_4^-,C_4\}$, then each edge between $U=V(\widetilde{H})$ and $V-U$ is red; otherwise there is a blue $P_5$, a contradiction.
It is obvious that $G-U$ is a red complete graph; otherwise there is a blue $P_2\cup P_4$, a contradiction.
Choose a vertex $u\in V-U$, $R-\{u,z_1\}$ contains a red $P_n$ when $n=5$. 
For $n\geq 6$, since $\Delta(B-\{u,z_1\})=2$, it follows that $\delta(R-\{u,z_1\})\geq n-3\geq \frac{n}{2}$.
By Dirac's Theorem, $R-\{u,z_1\}$ contains a red $P_n$.
Note that $u$ connects every other vertex by red an edge.
For $n\geq 5$, there is a red $\widehat{K}_n$, a contradiction.

If $\widetilde{H}=K_{1,2}^+$ (in other words, $\widetilde{H}$ is a triangle), then $d_B(z_1)=d_B(z_2)=2$.
without loss of generality, suppose that $N_B(z_1)\cap N_B(z_2)=\{z\}$.
By the maximality of $d_B(z_1)+d_B(z_2)$, we have that $d_B(z)=2$.
Since $B$ does not contain $2P_3$, $\Delta(B-\{z_1,z_2,z\})\leq 1$.
If $n=5$, then there is a vertex $u$ of $V-\{z_1,z_2,z\}$ such that $d_B(u)=0$.
Since all edges between $\widetilde{H}$ and $V-u-\widetilde{H}$ are red, it is clear that there is a red $P_n$ between $\widetilde{H}$ and $V-u-\widetilde{H}$.
Hence, $u\vee P_n$ is a red $\widehat{K}_n$, a contradiction.
If $n\geq 6$, then choose a vertex $u\in V-\{z_1,z_2,z\}$ and let $U=V-N_B(u)$. Since $d_B(u)\leq 1$, it follows that $|U|\geq n$, $\Delta(B[U])\leq 1$ and $u$ connects every vertex of $U$ by an red edge.
Then $\delta(R[U])\geq n-3\geq \frac{n}{2}$.
Hence, $R[U]$ contains a Hamiltonian path $P_n$, and $R$ contains a $\widehat{K}_n$, a contradiction.

If $\widetilde{H}=K_{1,k}^+$, where $k\geq 3$, then we can assume that the edges in $\widetilde{H}$ are 
$$z_1x_1,\ldots,z_1x_{k-1},z_1z_2,z_2x_{k-1}.$$
It is obvious that $\Delta(B-z_1)$ contains only one edge $z_2x_{k-1}$; otherwise there is either a blue $P_5$ or a blue $P_2\cup P_4$, a contradiction. 
Therefore, $\delta(R-\{z_1,x_1\})=n-2\geq \frac{n}{2}$. 
By Dirac's Theorem, $R-\{z_1,x_1\}$ contains a Hamiltonian path $P_n$.
Since $x_1$ is an isolated vertex in $B$, it follows that $R$ contains a $\widehat{K}_n$, a contradiction.

If $\widetilde{H}$ is a double star, then $N_B(z_1)\cap N_B(z_2)=\emptyset$.
If $B-\{z_1,z_2\}$ contains an edge $e$, then $E_{z_1}\cup E_{z_2}$ contains either a $P_5$ or a $P_2\cup P_4$, unless $E_{z_1}\cup E_{z_2}$ is a $P_4$  and $e=xy$ (say the edges in $E_{z_1}\cup E_{z_2}$ are $xz_1,z_1z_2$ and $z_2y$).
By the maximality of $d_B(z_1)=d_B(z_2)$, we have that $d_B(x)=d_B(y)=d_B(z_1)=d_B(z_2)=2$, and hence $B$ is a $C_4$ with $E(B)=\{z_1z_2,xz_1,z_2y,xy\}$. Recall that the case have discussed since $B=E_{z_1}\cup E_y$ is a $C_4$. 
Therefore, assume that $B-\{z_1,z_2\}$ is the empty graph. Since  $|N_B(z_1)-\{z_2\}|\cup |N_B(z_2)-\{z_1\}|\leq N+2=n$, either $|N_B(z_1)-\{z_2\}|\leq \frac{n}{2}$ or $|N_B(z_2)-\{z_1\}|\leq \frac{n}{2}$ (say $|N_B(z_2)-\{z_1\}|\leq \frac{n}{2}$).
Then $|V-z_1-N_B[z_2]|\geq n+2-(\frac{n}{2}+2)=\frac{n}{2}\geq 2$.
Choose two vertices $x,y$ of $V-z_1-N_B[z_2]$.
It is obvious that $x$ connects every vertex of $V-z_1$ by a red edge and $yz_2$ is a red edge.
Note that $R[V-\{z_1,z_2,x\}]$ is a complete graph.
Hence, there is a Hamiltonian path $Q=P_{n-1}$ of $R[V-\{z_1,z_2,x\}]$ with one endpoint $y$, and hence $Q\cup \{yz_2\}$ is a red $P_n$.
Now we obtain a red $\widehat{K}_n=x\vee (Q\cup \{yz_2\})$, a contradiction.
\end{proof}

Let $\mu_L$ be the number components of $3$-linear forest $L$. Note that $e(L)=|L|-\mu_L$.
\begin{lemma}\label{lem-1}
Suppose that $N=n+a$ and $3\leq a\leq \lfloor n/4\rfloor$.
For any red/blue edge-coloring of $K_N$, there is either a red copy of $\widehat{K}_n$ or a blue copy of $3$-linear forest $L$ such that $|L|-\mu_L\geq 2a$.
\end{lemma}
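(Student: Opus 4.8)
The plan is to argue by contradiction. Assume $K_N$ ($N=n+a$) is red/blue-coloured with no red $\widehat{K}_n$, let $L$ be a blue $3$-linear forest with $e(L)=|L|-\mu_L$ as large as possible, and suppose that $e(L)\le 2a-1$. Write $L=C_1\cup\cdots\cup C_s$ with $s=\mu_L$ and each $C_i$ a blue path on $k_i\ge 3$ vertices, so $|V(L)|=e(L)+s\le 3a-2$ and $s\le a-1$. Put $U=V(K_N)\setminus V(L)$; then $|U|\ge N-(3a-2)=n-2a+2\ge\lceil n/2\rceil+2$, using $a\le\lfloor n/4\rfloor$ (equivalently $2a\le\lfloor n/2\rfloor$). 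A first easy step: $B[U]$, the blue graph induced on $U$, has maximum degree at most $1$, since a blue $P_3$ inside $U$ could be adjoined to $L$ as a new component, contradicting maximality. Hence $R[U]$ is a complete graph minus a matching $M_U$.

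The technical core is a family of \emph{exchange moves}, each of which would enlarge $e(L)$ and so cannot occur: (a) no vertex of $U$ is blue-adjacent to an endpoint of some $C_i$ (absorb it into $C_i$); (b) no vertex $u\in U$, and no pair $u\ne u'$ in $U$, is blue-adjacent to two consecutive internal vertices of some $C_i$ (insert $u$, or split $C_i$ at that edge); (c) no internal vertex of $C_i$ is blue-adjacent to a vertex of $U$ that is covered by $M_U$ (attach that vertex and its partner and split off the tail of $C_i$); (d) for $i\ne j$, no edge between an endpoint of $C_i$ and an endpoint of $C_j$ is blue (merge $C_i,C_j$). From (a) and (d): every endpoint of every $C_i$ is red-adjacent to all of $U$, and all inter-component endpoint edges are red. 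From (b) and (c): the set $I_0$ of internal vertices having \emph{no} blue edge to $U$ is large — a count of how many internal vertices can be ``bad'' (bounded by roughly $\lfloor e(L)/2\rfloor\le a-1$, the bound coming from (b) and (c)) yields $|I_0|\ge (e(L)-s)-(a-1)$. Setting $S=U\cup\{\text{endpoints of all }C_i\}\cup I_0$, every vertex of $S\setminus U$ is red-adjacent to all of $U$, and the count gives $|S|\ge |U|+2s+|I_0|\ge N-e(L)+s+|I_0|\ge n+1$.

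To finish, I would produce a red $\widehat{K}_n$ inside $S$. First choose a centre $u_0\in U$: if $M_U$ does not cover $U$, take an uncovered $u_0$, which is then red-adjacent to \emph{all} of $S\setminus\{u_0\}$ ($\ge n$ vertices); if $M_U$ covers $U$, then by (c) no internal vertex is bad, so $I_0$ is \emph{all} the internal vertices and $|S|=N$, and any $u_0\in U$ is red-adjacent to $S\setminus\{u_0,u_0''\}$ ($\ge n+a-2\ge n+1$ vertices), where $u_0''$ is its $M_U$-partner. In either case $u_0$ dominates in red a set $S'\subseteq S$ with $|S'|\ge n$, where $R[S']$ consists of the near-complete graph $R[U\cap S']$ (a complete graph minus a matching) together with at most $3a-2$ further vertices each red-adjacent to all of $U\cap S'$. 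Taking a red Hamilton path of $R[U\cap S']$ and threading the few remaining vertices of $S'$ into its gaps (each is adjacent to every vertex of $U\cap S'$, so can be inserted; in the tight range one may need to insert short red paths rather than single vertices) produces a red path on $\ge n$ vertices, hence a red $P_n$; alternatively $R[S']$ is connected with minimum degree $\ge n/2$ and one may invoke Dirac's theorem / Lemma~\ref{mutil-HC}. Then $u_0\vee P_n$ is a red $\widehat{K}_n$, the desired contradiction.

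The main obstacle is making steps (a)--(d) and the threading uniform. The exchange moves require the relevant path segments to have at least three vertices, which can fail when some $C_i$ has only $3$, $4$, or $5$ vertices, and the threading of $S\setminus U$ into a red Hamilton path of $R[U]$ needs enough ``gaps'', which is delicate when $M_U$ is a perfect matching of $U$ or when $n$ is close to $4a$; these degenerate configurations must be disposed of by separate, more careful exchange arguments or by exhibiting the required long red path directly. The hypotheses $a\ge 3$ and $a\le\lfloor n/4\rfloor$ enter precisely here: the latter is exactly what forces $|U|$ to dominate $|V(L)|$ and guarantees $|S|\ge n+1$, and the former keeps the components long enough for the exchange moves to be available.
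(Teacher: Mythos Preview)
Your overall plan is the same as the paper's: take $L$ maximal, study the red/blue edges between $V(L)$ and $U$, note $\Delta(B[U])\le 1$, pick a centre $u_0\in U$, and build a red $P_n$ inside $N_{G_1}(u_0)$. Your moves (a), (b), (d) are correct and appear in the paper (as Fact~\ref{new-fact} and the obvious endpoint merge). The real divergence is in how you treat short components, and that is where your sketch has a concrete error.

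Your move (c) is \emph{false as stated}. Take $C_i=v_1v_2v_3v_4v_5$ and suppose $v_3u$ is blue with $uu''\in M_U$. Any way of splitting $C_i$ at $v_3$ and attaching $u,u''$ leaves a two-vertex tail ($v_1v_2$ or $v_4v_5$); dropping that tail gives a $3$-linear forest with exactly the same edge count, not more. So the centre of a $P_5$ component can be ``bad'' even when $M_U$ is a perfect matching, and your conclusion ``$I_0$ is all the internal vertices, hence $|S|=N$'' does not follow. Once (c) fails you are back to the bound bad~$\le a-1$ from (b) alone, giving only $|S|\ge n+1$; then $|S\setminus\{u_0,u_0''\}|\ge n-1$, which is one short. (Your threading count is also off by one here: in the perfect-matching case you need $2|U|\ge n+5$, but $n=4a$ gives only $2|U|\ge n+4$.) Everything can be rescued---(c) does hold for $P_3$, $P_4$ and $P_{\ge 6}$ components, so the only bad vertices are centres of $P_5$'s and there are at most $|V(L)|/5$ of them, which is enough---but you have not said this, and it is exactly the ``separate, more careful exchange argument'' you defer.

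The paper handles this differently and more constructively. It maximises $|L|$ secondarily (subject to $e(L)$ maximal), which immediately forces \emph{all} edges from a component of size $\ge 6$ to $U$ to be red; for components of size $3,4,5$ it shows that at most one \emph{removable} centre vertex $v$ can have blue edges to $U$, and crucially that $G_1[V(D)\setminus\{v\}]$ contains a red Hamiltonian path. It then threads these short red paths together through vertices of $U$ into a single red path $J$, patches on an alternating path $J'$ through the long components, and verifies $|J\cup J'|\ge n$ by a direct count on $|B_0|$ (the number of removable vertices) rather than on bad internal vertices. This removable-vertex structure is the missing technical ingredient in your sketch; once you have it, the boundary case $n=4a$ with a perfect matching on $U$ causes no trouble.
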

\begin{proof}
%The proof is proceeded by contradiction.
Suppose that $G$ is a red/blue edge-colored $K_{N}$ containing no red $\widehat{K}_n$.
Our aim is to obtain a blue $3$-linear forest $L$ with $|L|-\mu_L\geq 2a$ in $G$. Let $G_1$ and $G_2$ be graphs induced by red edges and blue edges, respectively. If $G_2$ does not contain $3$-linear forest, then $\Delta(G_2)\leq 1$. We can choose a vertex $u$ of $G_1$ and $S\subseteq V(G)-N_G[u]$ such that $|S|=n$.
Since $\delta(G_1[S])\geq n-2>n/2$, $G_1[S]$ contains a $P_n$. Hence $G$ has a $\widehat{K}_n$ with center $u$, a contradiction.
Therefore, $E(L)\neq \emptyset$.
We choose a $3$-linear forest $L$ from $G_2$ such that $e(L)=|L|-\mu_L$ is maximum.
Subjects to above, suppose that $|L|$ is maximum.
%Then $|L|-\mu_L\leq 2a-1$.

Let $U=V(G)-V(L)$.
Then the following result holds.
\begin{claim}\label{clm-1}
For each component $D$ of $L$ (say, $D=P_m$), the following results hold.
\begin{itemize}
\item[] $(i)$ If $m\geq 6$, then $E_G[V(D),U]\subseteq E(G_1)$.

\item[] $(ii)$ If $3\leq m\leq 5$, then either $E_G[U,V(D)]\subseteq E(G_1)$ or there is a vertex $v$ of $D$ such that $E_G[U,V(D)-v]\subseteq E(G_1)$.
    Moveover, if $v$ exists, then $v$ is a center vertex of $D$ and $G_1[V(D)-\{v\}]$ contains a Hamiltonian path.
\end{itemize}
\end{claim}
\begin{proof}
Suppose that $D=v_1v_2\ldots v_m$. We first prove the following result.
\begin{fact}\label{new-fact}
For each $u\in U$ and $i\in[m-1]$, $v_1u,v_mu\not \in E(G_2)$ and at most one of $uv_i,uv_{i+1}$ belongs to $G_2$.
\end{fact}
\begin{proof}
If $v_1u\in E(G_2)$ or $v_mu\in E(G_2)$, then we can find a $3$-linear forest $L\cup\{v_1u\}$ or $L\cup\{v_mu\}$ which is larger than $L$, a contradiction.
For $i\in [m-1]$, if $uv_i,uv_{i+1}\in E(G_2)$, then we can get a larger $3$-linear forest $(L\setminus  v_iv_{i+1})\cup \{uv_i,uv_{i+1}\}$ in $G_2$, a contradiction.
\end{proof}

$(i)$ Suppose to the contrary that there exists some $i \in[2,m-1]$ and $u\in U$ such that $v_iu\in E(G_2)$. Without loss of generality, suppose that $2\leq i\leq m/2$.
Then $(D\backslash \{v_iv_{i+1}\})\cup\{v_iu\}$ is a new $3$-linear forests of $G_2$ with $e(L')=e(L)$ and $|L'|=|L|+1$, which contradicts the choose of $L$.

$(ii)$
If $E_G[U,V(D)]\subseteq E(G_1)$, then the result holds. Thus, suppose that there exists a vertex $v_i\in V(D)$ and a vertex $u'\in U$ such that $v_iu'\in E(G_2)$.

If $m=3$, then by Fact \ref{new-fact}, $i=2$ and $v_2$ is the unique vertex such that $E_G[V(D)-v_2,U]\subseteq E(G_1)$.
Moreover, we have $v_1v_3\not\in E(G_2)$; otherwise $u'v_2v_1v_3$ is a path of $G_2$ and we can find a $3$-linear forest $(L-D)\cup u'v_2v_1v_3$ larger than $L$, a contradiction.
Thus, $v_1v_2$ is a Hamiltonian path of $G_1[V(D-v_2)]$.

If $m=4$,  then by Fact \ref{new-fact}, $i\in\{2,3\}$. 
Without loss of generality, assume that $i=2$.
If there is a vertex $z$ of $U$ with $v_3z\in E(G_2)$, then $z\neq u'$ by Fact \ref{new-fact}.
Hence, $E_G[V(D)-v_2,U]\subseteq E(G_1)$.
By the same discussion as above, we get that $v_1v_4,v_1v_3\in E(G_1)$; otherwise there is a $3$-linear forest larger than $L$, a contradiction. So, $v_3v_1v_4$ is a Hamiltonian path of $G_1[V(D-v_2)]$.

If $m=5$,  then $i\notin\{2,4\}$.
Otherwise, either $L'=(L\backslash \{v_2v_3\})\cup\{v_2u'\}$ or $L'=(L\backslash \{v_4v_3\})\cup\{v_4u'\}$ is a
$3$-linear forest with $e(L)=e(L')$ and $|L|<|L'|$, a contradiction.
Hence, $i=3$ and $E_G[V(D)-v_3,U]\subseteq E(G_1)$.
It is obvious that $v_1v_5,v_2v_5,v_1v_4\in E(G_1)$; otherwise, we can find a larger $3$-linear forest than $L$, a contradiction.
Therefore, $v_4v_1v_5v_2$ is a Hamiltonian path of $G_1[V(D-v_3)]$.
\end{proof}

We call the unique vertex $v$ in Claim \ref{clm-1} the {\em removable vertex} of $D$.
Note that our aim is to prove that $|L|-\mu_L\geq 2a$. 
For convenience, the proof is proceeded by contradiction, that is, $|L|-\mu_L\leq 2a-1$.
In the following proof,
in order to get a contradiction, we need to find a red $\widehat{K}_n$.

If $U=\emptyset$, then $|L|=N$ and $|L|-\mu_L\geq \frac{2|L|}{3}=\frac{2N}{3}$.
Since $a\leq \frac{n}{4}$, it follows that $|L|-\mu_L\geq 2a$, the result follows.
Thus, suppose that $U\neq \emptyset$.
Choose a vertex $u_0\in U$ such that $|N_{G_2}(u_0,U)|$ is as small as possible.
By the maximality of $L$, we have that $\Delta(G_2[U])\leq 1$, and hence $|N_{G_2}(u_0,U)|\leq 1$.
Moreover, if $|N_{G_2}(u_0,U)|=1$, then $G_2[U]$ is a perfect matching of $G[U]$.
Let $U'=U-u_0-N_{G_2}(u_0,U)$.
Then $|U'|\geq |U|-2$.

In order to get a contradiction, we need to find a red $\widehat{K}_n$ with center $u_0$, that is, to find a path $P_n$ in $G_1[N_{G_1}(u_0)]$.

Let $\mathcal{D}_1$ be the set of components $D$ of $L$ such that $E_G[V(D),U]\subseteq E(G_1)$, and let $\mathcal{D}_2$ be the set of other components of $L$.
By Claim \ref{clm-1}, for each $D\in \mathcal{D}_2$, we have $3\leq |D|\leq 5$.

The following  claim is immediate, which will be used later.

%\begin{claim}\label{cr}
%$\mathcal{D}_2\neq\emptyset$.
%\end{claim}
%\begin{proof}
%Assume, to the contrary, that $\mathcal{D}_2=\emptyset$. Then $E_G[V(L),U]\in E(G_1)$.
%Since $L$ is a $3$-linear forest and $|L|-\mu_L\leq 2a-1$, it follows that $|L|-\mu_L\geq \frac{2|L|}{3}$. Since $|L|-\mu_L\leq 2a-1$ and $a$ is an integer, $|L|\leq 3a-2$ and $|U'|\geq N-2-|L|\geq n-2a$.
%Since $a\leq n/4$, $|U'|\geq n/2$.
%If $|L|\geq |U'|$, then there is a red $P_n$ between $U'$ and $V(L)$, and hence $G_1$ contains a $\widehat{K}_n=u_0\vee P_n$, a contradiction.
%If $|L|< |U'|$, let $U''$ be a subset of $U'$ with $|U''|=|L|$.
%
%
%
%For each component $D\in \mathcal{D}_1$, we choose a vertex $v_D\in V(D)$.
%Let $U''=\{v_D:D\in \mathcal{D}_1\}$.
%Then $|U''|=\mu_L$ and $G_1[U''\cup U']$ is a complete graph.
%Note that 
%\begin{align*}
%|U''\cup U'|&\geq (N-|L|-2)+\mu_L=N-(L-\mu_L)-2\\
%&\geq N-(2a-1)-2=n-a-1\\
%&\geq 2a-1\geq |L|.
%\end{align*}
%Hence, $G_1[U''\cup U']$ contains a red path of order larger than $N-2\geq n$, and so $G_1$ contains a red $\widehat{K}_n$ with center $u_0$, a contradiction.
%\end{proof}

\begin{claim}\label{clm-2}
If $|N_{G_2}(u_0,U)|=1$, then there is no $P_3$ in $\mathcal{D}_2$.
\end{claim}
\begin{proof}
Assume, to the contrary, that $\mathcal{D}_2$ contains a path $P=v'vv''$. Then there exists a vertex $u\in U$ such that $uv\in E(G_2)$.
Recall that $|N_{G_2}(u_0,U)|=1$ implying $G_2[U]$ is a perfect matching of $G[U]$.
Thus, there exists a vertex $u'\in U$ such that $uu'\in E(G_2)$.
Let $L'$ be a $3$-linear forest obtained from $L$ by deleting $P$ and then adding $v'vuu'$.
Then $L'$ is a $3$-linear forest larger than $L$, a contradiction.
\end{proof}

From Claim \ref{clm-2}, if $|N_{G_2}(u_0,U)|=1$, then for each $D\in\mathcal{D}_2$, $|D|\geq 4$.

Let $A=\bigcup_{D\in \mathcal{D}_1}V(D)$ and $B=\bigcup_{D\in \mathcal{D}_2}V(D)$.
Let $B_0$ be the set of removable vertices.
Then by  Claim \ref{clm-1},
$$
N_{G_1}(u_0)=A\cup (B-B_0)\cup U'=V(G)-B_0-N_{G_2}(u_0,U)-u_0.
$$
Since $2a-1\geq |L|-\mu_L\geq \frac{2|L|}{3}$, it follows that 
$$|L|\leq 3a-2,$$ 
and hence
$$
|U'|\geq N-|L|-2\geq N-3a=n-2a\geq \frac{n}{2}.
$$

Since $N_{G_1}(u_0)=V(G)-B_0-N_{G_2}(u_0,U)-u_0$, it suffices to find a path $P_n$ in $G_1[V(G)-B_0-N_{G_2}(u_0,U)-u_0]$ obtained from two vertex disjoint paths $J,J'$, which will be defined below, by adding some edges to connect them.

We first construct $J$. Let $|\mathcal{D}_2|=t$ and $L_1,\ldots,L_t$ be all the paths in $\mathcal{D}_2$.
Then 
$$t\leq \frac{|L|}{3}\leq a-1< |U'|.$$
Let $f_i$ be the removable vertex of $L_i$.
By Claim \ref{clm-1}, each $G_1[V(L_i)]$ contains a path $L'_i$ such that $V(L'_i)=V(L_i)-f_i$.
Then $V(L'_i)\subseteq N_{G_1}(u_0)$ for each $i\in[t]$ and $\bigcup_{i\in[t]}V(L'_i)=B-B_0$.
Let $z_i^1,z_i^2$ be the endpoints of $L'_i$.
Since $t< |U'|$, we can choose a subset $Y=\{y_1,y_2,\ldots,y_t\}$ of $U'$.
Let
$$
E'=\{y_1z_1^1,y_2z_1^2,y_2z_2^1,y_3z_2^2,\ldots,y_tz_{t-1}^2,y_tz_t^1\}.
$$
Then $E'\subseteq E(G_1)$ and $J=E'\cup (\bigcup_{i\in[t]}L'_i)$ is a path of $G_1$ with endpoints $y_1$ and $z^2_t$.
It is worth noting that if $\mathcal{D}_2=\emptyset$, then let $J$ be the empty graph.

Now we construct $J'$. Let $X=U'-Y$ and $\ell=\min\{|X|, |A|\}$.
It follows from $|U'|>|Y|$ that $X\neq \emptyset$.
Let $A'$ be a subset of $A$ and $X'$ be a subset of $X$ such that $|A'|=|X'|=\ell$.
It is obvious that $G_1[A'\cup X']$ contains a red path $J'$ of order $2\ell$.
Moreover, one endpoint of $J'$ is in $A'$ (say $b'$) and the other endpoint is in $X$ (say $b$).
Since $X\neq \emptyset$, if $\ell=0$, then $A=\emptyset$, and let $J'$ be the empty graph.
%Note that if $A=\emptyset$, then let $|J'|=0$; otherwise, $|J'|\geq 2$. 
%a single vertex ($b$ exists since $t<|U'|$).

We construct a red path $P$ as follows. If both $J,J'$ are not empty, then we let $P=J\cup J'\cup\{b'y_1\}$;
if $J'$ is the empty graph, then let $P=J$;
if $J$ is the empty graph, then let $P=J'$.
It is clear that $P$ is a red path of $G_1$ with order $2\ell +|B-B_0|+t$ and $V(P)\subseteq N_{G_1}(u_0)$.
If $|X|\leq |A|$, then $\ell=|X|$ and 
$$2\ell +|B-B_0|+t=2|U'|-t+|B-B_0|\geq 2|U'|+t\geq n$$
(note that $|B-B_0|\geq 2t$ since $L$ is a $3$-linear forest).
Thus, $u_0\vee P$ contains a red $\widehat{K}_n$, a contradiction.
Hence, we assume that $|X|>|A|$ below (then $\ell=|A|$).

%
%Since there is no $P_3$ in $G_2[U'']$, it follows that $G_1[U'']$ contains a maximum path $P'$ with $|P'|\geq |U''|-1$ (say the endpoints of $P'$ are $z,z^*$).
%Moreover, $|P'|=|U''|-1$ if any only if $|U''|=2$ (say $U''=\{s_1,s_2\}$) and $s_1s_2\in E(G_2)$.

\begin{figure}[h]
    \centering
    \includegraphics[width=300pt]{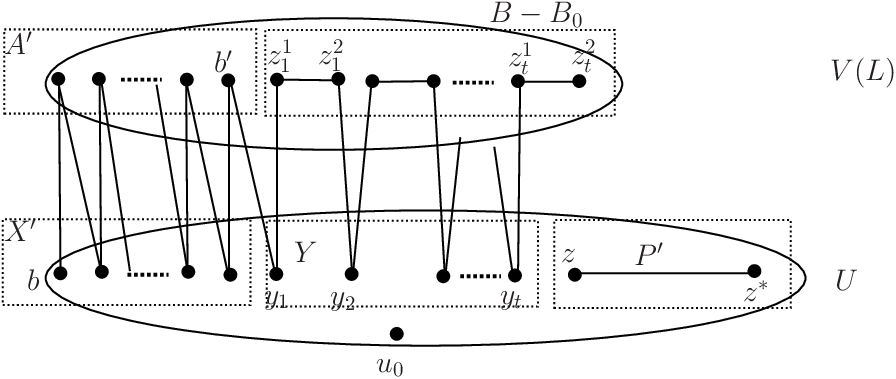}\\
    \caption{The path $P^*$.} \label{p-1}
\end{figure}

\begin{claim}\label{clm-3}
$G_1[N_{G_1}(u_0)]$ contains a path of order $N-|B_0|-1-|N_{G_2}(u_0,U)|$.
\end{claim}
\begin{proof}
Let $U''=U'-Y-X'=X-X'$. Since $X'\subseteq X$ and $|X'|=\ell=|A|<|X|$, it follows that $U''\neq \emptyset$.
Let $P'$ be a maximum path in $G_1[U'']$.
Since $\Delta(G_2[U''])\leq 1$, it follows that $P'$ is a Hamiltonian path of $G_1[U'']$ (say the endpoints of $P'$ are $z,z^*$; if $|U''|=1$, then $z=z^*$), unless $|U''|=2$ and the two vertices of $U''$ forms a blue edge.

If $P'$ is a Hamiltonian path of $G_1[U'']$, then $P^*=P'\cup P\cup \{z^2_t z\}$ is a path of $G_1$ and $|P^*|= N-|B_0|-1-|N_{G_2}(u_0,U)|$.
Otherwise, $|U''|=2$ (say $U''=\{s_1,s_2\}$) and $s_1s_2\in E(G_2)$. 
Thus, we assume that $|P'|=|U''|-1$. Then $U''=\{s_1,s_2\}$.
Note that one endpoint of $P$ is $z_t^2$ and the other endpoint is $w\in \{b,y_1\}$ (if $J'$ is the empty graph, then $w=y_1$; otherwise $w=b$).
Since $\Delta(G_2[U])\leq 1$, we have that one of $ws_1,ws_2$ is in $E(G_1)$ (by symmetry, suppose $ws_2\in E(G_1)$).
If $J$ is not the empty graph, then $P^*=P\cup\{ws_2,z_t^2s_1\}$ is a path of $G_1$, and $|P^*|=N-|B_0|-1-|N_{G_2}(u_0,U)|$;
if $J$ is the empty graph, then $P^*=P\cup\{ws_2,b's_1\}$ is a path of $G_1$, and $|P^*|=N-|B_0|-1-|N_{G_2}(u_0,U)|$.
\end{proof}

By Claim \ref{clm-3}, in order to get a $P_n$ in $G_1[N_{G_1}(u_0)]$, we only need to prove that $$N-|B_0|-1-|N_{G_2}(u_0,U)|\geq n.$$
Suppose, to the contrary, that $N-|B_0|-1-|N_{G_2}(u_0,U)|<n$.
Then 
$$n+a= N\leq n+|B_0|+|N_{G_2}(u_0,U)|,$$ and hence $a\leq |B_0|+|N_{G_2}(u_0,U)|$.
If $|N_{G_2}(u_0,U)|=0$, then $a\leq |B_0|$.
Recall that $|L|\leq 3a-2$.
Then $a\leq |B_0|\leq \mu_L\leq \frac{3a-2}{3}$, a contradiction.
If $|N_{G_2}(u_0,U)|=1$, then it follows from Claim \ref{clm-2} that each path of $\mathcal{D}_2$ has order at least $4$, and hence $|B_0|\leq \frac{|L|}{4}$.
Then $a\leq |B_0|+1\leq \frac{|L|}{4}+1\leq \frac{3a+2}{4}$, which contradicts that $a\geq 3$.
Therefore, we get a $P_n$ in $N_{G_1}(u_0)$, and hence there is a $\widehat{K}_n$ in $G_1$, a contradiction.
\end{proof}

Now we give the proof of Theorem \ref{class}.

\noindent {\bf Proof of Theorem \ref{class}:}
Let $N=n+\lceil m/2\rceil-1$ and $G=K_N$.
We partition $V(G)$ into two parts $A,B$ such that $|A|=n$ and $|B|=\lceil m/2\rceil-1$.
Let $\Gamma$ be a red/blue edge-coloring of $G$ such that each edge of $G[A]$ is red and the other edges are blue.
It is easy to verify that there is no a red copy of $\widehat{K}_n$.
Suppose that $L$ is a blue linear forest with maximum number
of edges. Since each edge of $L$ incidents to at least one vertex of $B$ and each vertex of $B$
incidents to at most two edges of $L$,  it follows that $e(L)\leq 2|B|\leq m-1$, and hence $G$ does not contain blue linear forests of size at least $m$.
Therefore, $r(\widehat{K}_n,\mathcal{L})\geq n+ \lceil\frac{m}{2}\rceil$ and $r(\widehat{K}_n,\mathcal{L}')\geq n+ \lceil\frac{m}{2}\rceil$.
By Lemmas \ref{clm-0} and \ref{lem-1}, we have $r(\widehat{K}_n,\mathcal{L})\leq n+ \lceil\frac{m}{2}\rceil$ and $r(\widehat{K}_n,\mathcal{L}')\leq n+ \lceil\frac{m}{2}\rceil$ by assuming that $a=\lceil\frac{m}{2}\rceil$.
Hence,  $r(\widehat{K}_n,\mathcal{L})=n+ \lceil\frac{m}{2}\rceil$.

\section{Proof of Theorems \ref{main-p5-paths}, \ref{main-p4+-paths} and \ref{main-k13-paths}}

In order to prove Theorems \ref{main-p5-paths}, \ref{main-p4+-paths} and \ref{main-k13-paths}, we need the exact values of $b_k(P_n)$ and $t(P_n)$ first.
The following is the exact value of $b_k(P_n)$.
\begin{lemma}\label{bkpn}
If $k\geq 3$ and $n\geq 2(k-1)$, then
$$b_k(P_n)=\left\{
\begin{array}{ll}
n, &  \mbox{if}~2(k-1)\leq n\leq 4(k-2)+1,\\
\left\lceil\frac{3n-3}{2}\right\rceil, &\mbox{if}~n>4(k-2)+1 .\\
\end{array}
\right.$$
\end{lemma}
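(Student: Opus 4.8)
\textbf{Proof proposal for Lemma~\ref{bkpn}.}
The plan is to treat the lower bound and upper bound separately, and in each case to split into the two regimes $2(k-1)\le n\le 4(k-2)+1$ and $n>4(k-2)+1$. Recall that a coloring $\Gamma\in\mathcal{B}_k(n)$ is specified by a partition of $V(K_n)$ into parts $V_1,\dots,V_{k-1}$, each of size at least $2$, where all edges between distinct parts get color $1$ and the edges inside $V_i$ are colored with colors $1$ and $i+1$; equivalently, for each $i\in[k-1]$ there is a graph $H_i$ on $V_i$ (the color-$(i+1)$ edges) and all remaining edges (inside parts or across parts) are color $1$. So a monochromatic $P_n$ in color $j\ge 2$ must live entirely inside a single part $V_{j-1}$, while a monochromatic $P_n$ in color $1$ must avoid using any edge of any $H_i$.

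For the lower bound I would exhibit, for each $n$ strictly below the claimed value, a coloring in $\mathcal{B}_k(n)$ with no monochromatic $P_n$. In the first regime the claim is $b_k(P_n)=n$, so on $n-1$ vertices I need a coloring with no monochromatic $P_{n}$ — but a $P_n$ cannot even fit in $n-1$ vertices, so \emph{any} coloring works provided $\mathcal{B}_k(n-1)$ is nonempty, which requires $n-1\ge 2(k-1)$, i.e. $n\ge 2k-1$; the boundary case $n=2(k-1)$ needs a moment's separate attention (here one can take all parts of size exactly $2$ and color each $V_i$ entirely in color $i+1$, so the longest monochromatic path in any color $\ge 2$ is $P_2$ and in color $1$ is $P_{k-1}$, and $k-1<2(k-1)=n$). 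In the second regime, where the target is $\lceil(3n-3)/2\rceil$, I would use the classical extremal construction behind $r(P_n,P_n)$: on $\lceil(3n-3)/2\rceil-1$ vertices, take one part $V_1$ of size $n-1$ with $H_1$ a clique (color $2$) — more precisely split into two color-$1$-cliques and a color-$2$-clique of the appropriate sizes so that no color class contains $P_n$ — and make the remaining $k-2$ parts small (size $2$) absorbing the extra vertices, choosing their internal colors so that colors $3,\dots,k$ only ever see $P_2$. One checks the color-$1$ graph is a union of a large complete bipartite-type piece whose longest path is too short, using $r(P_n,P_n)=n+\lfloor n/2\rfloor-1$ (Theorem~\ref{path-path}); the condition $n>4(k-2)+1$ is exactly what makes $\lceil(3n-3)/2\rceil-1$ large enough that the $k-2$ small parts of size $2$ fit inside the $n-1$-sized leftover, so the construction is legal in $\mathcal{B}_k$.

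For the upper bound, take $\Gamma\in\mathcal{B}_k(N)$ with $N$ the claimed value and suppose there is no monochromatic $P_n$. For each $i$, since $H_i$ has no $P_n$ in color $i+1$, by the Erdős–Gallai theorem $e(H_i)\le \frac{(n-2)|V_i|}{2}$, and moreover the color-$1$ graph $G_1=K_N-\bigcup_i H_i$ has no $P_n$ either. The key structural point is that $G_1$ contains the complete $(k-1)$-partite graph on $(V_1,\dots,V_{k-1})$, plus whatever color-$1$ edges sit inside the parts. If any part, say $V_1$, has $|V_1|\ge n$, then since $H_1$ has no $P_n$, its complement within $V_1$ together with cross edges... — more cleanly: the complement of $H_1$ inside $K_N$ restricted to $V_1$ has minimum degree $\ge |V_1|-1-(n-2)\ge 1$ is too weak, so instead I would argue via the known fact (implicit in $r(P_n,P_n)$) that a graph on $\ge n$ vertices whose complement has no $P_n$ must itself contain $P_n$ once the vertex count reaches $n+\lfloor n/2\rfloor-1$; apply this to $G_1[V_1]$ vs $H_1$ when $|V_1|$ is large, and separately handle the case where all parts are smaller than $n$, in which case every monochromatic path in color $\ge 2$ is automatically too short and we only need a long color-$1$ path, which the complete $(k-1)$-partite skeleton provides via Lemma~\ref{mutil-HC} (a complete multipartite graph with the largest part not exceeding the sum of the rest has a Hamiltonian cycle, hence $P_N\supseteq P_n$) — the balance condition holds precisely when $N\ge n$ and no part is too dominant, and one massages the two regimes of $N$ to guarantee this.

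\textbf{Main obstacle.} The delicate point is the interaction in the case where one part $V_i$ is large (between $n$ and $N-2$, say): there one must simultaneously forbid a color-$(i+1)$ $P_n$ inside $V_i$ \emph{and} a color-$1$ $P_n$ that may run partly inside $V_i$ (on the non-$H_i$ edges) and partly across to other parts. This is exactly a two-coloring-of-$K_{|V_i|}$ Ramsey problem glued to an external complete-multipartite graph, and the clean way through is to show that if $|V_i|\ge \lceil(3n-3)/2\rceil$ then already $K_{|V_i|}$ is $2$-colored (color $i+1$ vs color $1$) and by $r(P_n,P_n)$ contains a monochromatic $P_n$, and otherwise all parts have size $<\lceil(3n-3)/2\rceil\le N$ so no single part can host a color-$\ge 2$ $P_n$ while the multipartite skeleton is balanced enough (largest part $<$ half of $N$, roughly) to carry a Hamiltonian, hence color-$1$, path of length $\ge n$. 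Getting the arithmetic of these thresholds to line up exactly at $n$ in the low regime and at $\lceil(3n-3)/2\rceil$ in the high regime, including the boundary $n=4(k-2)+1$ versus $n=4(k-2)+2$, is where the real bookkeeping lies; I expect to need the hypothesis $n\ge 2(k-1)$ at precisely the point of guaranteeing each part has size $\ge 2$ without being forced large, and the case split on the parity implicit in $\lceil(3n-3)/2\rceil$ to match the Erdős–Gallai/Gerencsér–Gyárfás extremal number on the nose.
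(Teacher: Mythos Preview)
Your upper-bound dichotomy does not close. In the second regime with $N=\lceil(3n-3)/2\rceil$, the largest part $V_{k-1}$ can have size anywhere up to $N-2(k-2)$. Take for instance $k=3$, $n$ large, $|V_1|=2$, $|V_2|=N-2$: then $|V_2|<N=r(P_n,P_n)$, so the $2$-coloring of $G[V_2]$ need not contain a monochromatic $P_n$ in either color, while the bipartite skeleton $K_{2,N-2}$ only carries a $P_5$. Neither branch of your argument fires, and the claim ``largest part $<N/2$'' is simply false here. The Erd\H{o}s--Gallai bound you mention on $e(H_i)$ does nothing to control the \emph{path structure} of the color-$1$ graph inside $V_{k-1}$, so it does not rescue the situation.

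The missing idea, which the paper supplies, is to combine a partial color-$1$ path inside the big part with the cross-edges. When $|U|=\sum_{i<k-1}|V_i|<\lfloor n/2\rfloor$, one determines the $m$ for which $|V_{k-1}|=r(P_n,P_m)$, extracts a color-$1$ copy of $P_m$ inside $V_{k-1}$ (since color $k$ has no $P_n$ there), and then \emph{extends} it by zig-zagging along the color-$1$ edges between $U$ and $V_{k-1}\setminus V(P_m)$. A parity case-split on $m$ (and, for even $m\ge 4$, replacing $P_m$ by a two-component odd linear forest via Theorem~\ref{r-linear} so as to gain one extra vertex when gluing) is what makes the count reach $n$ on the nose. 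Your outline never builds a color-$1$ path that lives partly inside $V_{k-1}$ and partly across parts, and that hybrid is unavoidable when $|V_{k-1}|$ dominates. (A minor aside: your boundary construction at $n=2(k-1)$ actually \emph{does} contain a color-$1$ Hamiltonian path, since the cocktail-party graph $K_{2,\dots,2}$ is Hamiltonian; but this is harmless, as $b_k(P_n)\ge n$ is automatic.)
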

\begin{proof}
Let $N$ be a positive integer and $G$ be an edge-colored $K_N$ such that the edge-coloring belongs to $\mathcal{B}_k(N)$. Let $V_1,\ldots,V_{k-1}$ be all the $k-1$ parts. Without loss of generality, let $2\leq |V_1|\leq \ldots\leq |V_{k-1}|$.
Let $U=\bigcup_{i=1}^{k-2}V_i$.

If $2(k-1)\leq n\leq 4(k-2)+1$, then let $N=n$.
It is obvious that $|U|\geq2(k-2)\geq \left\lfloor\frac{n}{2}\right\rfloor$.
By Lemma \ref{mutil-HC},
$G$ contains a Hamiltonian path with color 1.
Thus, $b_k(P_n)=n$.
Therefore, let $n>4(k-2)+1$ in the following discussion.

We first prove the upper bounds.
Let $N=\left\lceil\frac{3n-3}{2}\right\rceil$.
If $|U|\geq \left\lfloor\frac{n}{2}\right\rfloor$, then by Lemma \ref{mutil-HC}, $G$ contains a monochromatic path $P_n$ with color $1$.
If $|U|< \left\lfloor\frac{n}{2}\right\rfloor$, then 
\begin{align}\label{Vk-1}
|V_{k-1}|\geq N-(\left\lfloor\frac{n}{2}\right\rfloor-1)= n.
\end{align}
Thus, by Theorem \ref{path-path}, there is an integer $m\geq 2$ such that $|V_{k-1}|=r(P_n,P_m)$, and hence $G[V_{k-1}]$ contains either a monochromatic $P_m$ with color $1$ or a monochromatic $P_n$ with color $k$.
If the latter holds, then the monochromatic $P_n$ is obtained. Hence, suppose that the former holds.
If $m\geq n$, then the result follows.
Thus, let $m<n$ and $G[V_{i-1}]$ contains a monochromatic $P_m$ with color $1$.
Then $|V_{k-1}|=r(P_n,P_m)=n+\left\lfloor\frac{m}{2}\right\rfloor-1$.
Since $U\neq \emptyset$ and $V_{k-1}-V(P_m)\neq \emptyset$, it follows that $G$ contains a monochromatic $P_{m+2}$ that is obtained from the $P_m$ by adding two edges $e_1,e_2$ between $U$ and $V_{k-1}-V(P_n)$, where $e_1$ joins one endpoint of $P_m$ and a vertex of $U$ (say $x$ is the endpoint of $e_1$ belonging to $U$), and $e_2$ joins $x$ and a vertex of  $V_{k-1}-V(P_m)$.
This implies that $G$ contains a monochromatic $P_n$ when $n-2\leq m<n$.
Hence, we assume that $n-3\geq m\geq 2$ below.

\setcounter{case}{0}
\begin{case}
$m$ is odd.
\end{case}

Let $W=V_{k-1}-V(P_m)$.
If $|W|\leq |U|$, then $G[W\cup U]$ contains a monochromatic path $P_{2|W|}$ with color $1$, and hence $G$ contains a monochromatic $P_{m+2|W|}$ obtained form the monochromatic paths $P_m$ and $P_{2|W|}$ by adding an edge joins one endpoint of $P_m$ and the endpoint of $P_{2|W|}$ belonging to $U$.
Since $m+2|W|=|V_{k-1}|+|W|\geq n$, $G$ contains a monochromatic $P_n$.
If $|W|>|U|$, then $G$ contains a monochromatic copy of $P_{2|U|+m}$.
It suffices to prove that $2|U|+m\geq n$.
Since $m$ is odd, it follows that $|V_{k-1}|=n+\frac{m-1}{2}-1$, and hence $m= 2|V_{k-1}|-2n+3$. So
\begin{align*}
2|U|+m=2|U|+2|V_{k-1}|-2n+3=2N-2n+3= n.
\end{align*}

\begin{case}
$m$ is even and $m\geq 4$.
\end{case}

Let $m+2=m_1+m_2$, where $m_1,m_2\geq 3$ and $m_1,m_2$ are odd.
Since $|V_{k-1}|=n+\frac{m}{2}-1=n+\frac{(m+2)-2}{2}-1$ and $n-3\geq m$, it follows from Theorem \ref{r-linear} that
$|V_{k-1}|=r(P_n,P_{m_1}\cup P_{m_2})$, and hence $G[V_{k-1}]$ contains either a monochromatic path $P_n$ with color $k$, or a monochromatic path $P_{m_1}\cup P_{m_2}$ with color $1$. If the former holds, then $G$ contains a monochromatic $P_n$. Hence, suppose that the latter holds.
Assume that $u\in U$. Let $U'=U-\{u\}$ and $W=V_{k-1}-V(P_{m_1}\cup P_{m_2})$.
If $|W|\leq |U'|$, then $G[W\cup U']$ contains a monochromatic path $P_{2|W|}$ with color $1$.
We can choose the two endpoints of $P_{2|W|}$ as $x\in W$ and $y\in U'$.
Without loss of generality, suppose that the two endpoints of $P_{m_1}$ are $x_1',y_1'$  and the two endpoints of $P_{m_2}$ are $x_2',y_2'$. Then $P_{m_1}\cup P_{m_2}\cup P_{2|W|}\cup \{uy_1',ux_2',y_2'y\}$ is a monochromatic $P_{2|W|+(m_1+m_2)+1}$ with color $1$.  
Note that 
$$2|W|+(m_1+m_2)+1=|W|+|V_{k-1}|+1>|V_{k-1}|\geq n$$ 
by the inequality (\ref{Vk-1}). It follows that $G$ contains a monochromatic copy of $P_n$ with color $1$.
If $|W|>|U'|$, then $G$ contains a monochromatic copy of $P_{m_1+m_2+1+2|U'|}$.
We need to prove that $m_1+m_2+1+2|U'|=m+2|U|+1\geq n$.
Since $|V_{k-1}|=n+\left\lfloor\frac{m}{2}\right\rfloor-1$ and $m$ is even, it follows that $m= 2|V_{k-1}|-2n+2$, and hence
$$
m+2|U|+1=2|V_{k-1}|-2n+2|U|+3=2(N-n)+3\geq n.
$$

\begin{case}
$m$ is even and $2\leq m\leq 3$.
\end{case}

In this case, $|V_{k-1}|=n$ and $|U|=N-|V_{k-1}|=\lceil\frac{n-1}{2}\rceil-1$.
Since $|V_{k-1}|=r(P_n,P_3)$, if $G[V_{k-1}]$ does not contain a monochromatic copy of $P_n$ with color $k$, the monochromatic $P_n$ is obtained. Otherwise, $G[V_{k-1}]$ contain monochromatic $F=P_3$ with color $1$.
We can choose a subset $W$ of $V_{k-1}-V(F)$ such that $|W|=\lceil\frac{n-1}{2}\rceil-1$, since $|V_{k-1}-V(F)|= n-3\geq \lceil\frac{n-1}{2}\rceil-1$.
Then $G[W\cup U]$ contains a monochromatic $P_{2|W|}$ with color $1$, and there is a  monochromatic $P_{2|W|+3}$ with color $1$ that is obtained by connecting the endpoint of $P_{2|W|}$ belonging to $U$ and an endpoint of $P_3$.
Since $2|W|+3\geq n$, there is a monochromatic $P_n$ with color $1$.

Now we prove the lower bounds.
Let $M=\left\lceil\frac{3n-3}{2}\right\rceil-1$ and $G=K_M$.
We partition $V(G)$ into $k-1$ parts  $V_1,\ldots,V_{k-1}$ such that $|V_1|=|V_2|=\ldots=|V_{k-2}|=2$ and $|V_{k-1}|=\left\lceil\frac{3n-1}{2}-2(k-1)\right\rceil$.
Let $V_i=\{u_i,v_i\}$ for $i\in[k-2]$, and let $G[V_{k-1}]=H\cup \overline{H}$, where $H$ is the disjoint union of $H_1=K_{\left\lceil\frac{n-4(k-1)+1}{2}\right\rceil}$ and $H_2=K_{n-1}$.
Choose a $k$-edge-coloring $\Gamma\in \mathcal{B}_k(M)$ such that $u_iv_i$ is colored by $i+1$ for $i\in [k-2]$ and $H$ is a monochromatic graph with color $k$, and the other edges are colored by $1$. 
It is easy to verify that there is no monochromatic $P_n$ with color $i\geq 2$.
For color $1$, 
it is clear that the edges with color $1$ induces a spanning complete $k$-partite 
graph with $k$ parts $V_1,V_2,\ldots,V_{k-2},V(H_1),V(H_2)$.
Let $P$ be a minimum monochromatic path.
Since $V(H_2)$ is an independent set in the complete $k$-partite graph, each edge of $P$ incidents with a vertex of $V(G)-V(H_2)$, and hence 
$e(P)\leq 2|V(G)-V(H_2)|=2(M-n+1)\leq n-2.$
Thus, $|P|\leq n-1$, and there is no monochromatic $P_n$ with color $1$.
Therefore, there is no monochromatic $P_n$ in $G$, which implies that $b_k(P_n)\geq M+1=\left\lceil\frac{3n-3}{2}\right\rceil$.
\end{proof}

The following is the exact value of $t(P_n)$.

\begin{lemma}\label{tpn}
If $n\geq 3$, then
$$t(P_n)=\left\{
\begin{array}{ll}
\frac{3n}{2}-1, &  n\mbox{ is even},\\
\frac{3n-1}{2}, & n\mbox{ is odd }.
\end{array}
\right.$$
\end{lemma}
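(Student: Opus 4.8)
The plan is to establish the matching lower and upper bounds on $t(P_n)$, in both cases reducing everything to the interaction between the monochromatic complete bipartite graphs $K_{V_i,V_j}$ that the structure $\mathcal{T}$ forces and the $2$-colourings of the cliques $K_{V_i}$. Throughout I would write $a=|V_1|\le b=|V_2|\le c=|V_3|$; this is without loss of generality because the structure $\mathcal{T}$ is symmetric under any permutation of $\{V_1,V_2,V_3\}$ once one permutes the colours along with it (colour $1,2,3$ being naturally indexed by the pairs $\{V_1,V_2\},\{V_2,V_3\},\{V_1,V_3\}$, and the colours allowed inside $V_i$ being exactly those whose pair contains $V_i$). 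Recall $\lceil 3n/2\rceil-1$ equals $\tfrac{3n}{2}-1$ for even $n$ and $\tfrac{3n-1}{2}$ for odd $n$, so the claim is $t(P_n)=\lceil 3n/2\rceil-1$.

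For the lower bound I would exhibit an explicit member of $\mathcal{T}(t(P_n)-1)$ with no monochromatic $P_n$, using the cyclic pattern in which every edge inside $V_1$ gets colour $3$, every edge inside $V_2$ gets colour $1$, and every edge inside $V_3$ gets colour $2$ (the edges between parts being forced). Then colour class $1$ is the join of the clique on $V_2$ with the independent set on $V_1$, colour class $2$ is the join of the clique on $V_3$ with the independent set on $V_2$, and colour class $3$ is the join of the clique on $V_1$ with the independent set on $V_3$. When the independent side is no larger than the clique side, such a join has a Hamiltonian path and nothing longer, so each colour class contains a $P_n$ only if its support has $\ge n$ vertices. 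Taking $|V_1|=|V_2|=|V_3|=\tfrac{n-1}{2}$ for odd $n$, and $|V_1|=|V_2|=\tfrac{n}{2}-1$, $|V_3|=\tfrac{n}{2}$ for even $n$, the three supports have sizes $n-1,n-1,n-1$ (resp.\ $n-2,n-1,n-1$), all smaller than $n$, so no monochromatic $P_n$ occurs; the total number of vertices is exactly $t(P_n)-1$.

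For the upper bound, let $\Gamma\in\mathcal{T}(N)$ with $N=\lceil 3n/2\rceil-1$. If $b\ge\lceil n/2\rceil$ then $K_{V_2,V_3}\supseteq K_{\lceil n/2\rceil,\lceil n/2\rceil}$, which already contains a colour-$2$ copy of $P_n$; so assume $b\le\lceil n/2\rceil-1$. Then $a+b\le n-1$, so colour $1$ (supported on $V_1\cup V_2$) cannot contain $P_n$, while $a+c=N-b\ge n$ and $b+c=N-a\ge n$. If $c\le b+1$ then $K_{V_2,V_3}$ has a Hamiltonian path on $b+c\ge n$ vertices and we are done; so assume $c\ge b+2$. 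Let $H$ be the colour-$2$ graph induced inside $V_3$, so that $K_{V_3}\setminus H$ is the colour-$3$ graph inside $V_3$. A direct computation with Theorem~\ref{path-path} gives $c\ge r(P_{n-2b},P_{n-2a})$ (the degenerate case $n-2b=1$ being handled directly), hence $H$ contains $P_{n-2b}$ or $K_{V_3}\setminus H$ contains $P_{n-2a}$. In the first case I take that $P_{n-2b}$ together with $b$ further vertices of $V_3$ (there is room, since $c\ge n-b$); these $b+1$ vertex-disjoint paths inside $V_3$ are then linked, using the $b$ vertices of $V_2$ as connectors (each joined in colour $2$ to all of $V_3$), into a colour-$2$ path on $(n-2b)+b+b=n$ vertices. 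The second case is symmetric, with $V_1$ and colour $3$. This yields $t(P_n)\le N$.

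I expect the technical heart to be the last case of the upper bound, $a\le b\le\lceil n/2\rceil-1$ and $c\ge b+2$: there colour $1$ is inert and everything hinges on how the $\binom{c}{2}$ internal edges of $V_3$ are split between colours $2$ and $3$, from which one must extract a monochromatic path and then grow it through $V_2$ (resp.\ $V_1$). The key realisation I would emphasise is that one should not look for a single long monochromatic path inside $V_3$, but for a short path $P_{n-2b}$ (resp.\ $P_{n-2a}$) which, padded by isolated vertices into a linear forest with at most $b+1$ (resp.\ $a+1$) components, can be stitched into a $P_n$ using the $b$ (resp.\ $a$) connector vertices available in $V_2$ (resp.\ $V_1$); this is exactly the shape of output that Theorem~\ref{path-path} provides at the forced size $c=N-a-b$. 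Keeping the floor functions and the two parities of $n$ aligned with $\lceil 3n/2\rceil-1$ is the only genuinely fiddly point.
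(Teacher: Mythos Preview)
Your proof is correct and follows essentially the same strategy as the paper: apply $r(P_m,P_{m'})$ inside the largest part and then thread the resulting monochromatic path through the appropriate smaller part to reach length $n$. The only cosmetic difference is that you target the path lengths $n-2b$ and $n-2a$ directly (exactly what the extension step needs), whereas the paper chooses its second length $a_2$ so that $r(P_{a_1},P_{a_2})$ equals the size of the large part exactly and then possibly truncates; your bookkeeping is marginally cleaner, but the idea is identical.
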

\begin{proof}
We first prove the lower bounds.
Suppose that $M=\frac{3n}{2}-2$ if $n$ is even, and $M=\frac{3n-1}{2}-1$ if $n$ is odd.
Let $\Gamma'\in \mathcal{T}(M)$ be an edge-coloring of $G'=K_M$ and the three parts are $V_1,V_2,V_3$.
If $n$ is even, then let $|V_1|=\frac{n}{2}$ and $|V_2|=|V_3|=\frac{n}{2}-1$.
If $n$ is odd, then let $|V_1|=|V_2|=|V_3|=\frac{n-1}{2}$.
Moreover, let $G'[V_i]$ be a complete graph with color $i$.
Then $K_M$ does not contain a monochromatic copy of $P_n$.
Therefore, the lower bounds are obtained.

Now we prove the upper bounds.
Suppose that $N=\frac{3n}{2}-1$ if $n$ is even, and $N=\frac{3n-1}{2}$ if $n$ is odd.
Let $\Gamma\in \mathcal{T}(N)$ be an edge-coloring of $K_N$ with the three parts $V_1,V_2,V_3$.
Without loss of generality, suppose that $|V_1|\geq |V_2|\geq |V_3|$.
Note that $V_2,V_3\neq \emptyset$.

Let $a_1=n-2|V_2|$.
If $a_1<2$, then $|V_2|\geq \lceil\frac{n-1}{2}\rceil$.
If $|V_1\cup V_2|\geq n$, then there is a monochromatic $P_n$ between $V_1$ and $V_2$.
Otherwise, suppose that $|V_1\cup V_2|\leq n-1$.
Then $|V_1|\leq \lfloor\frac{n-1}{2}\rfloor$.
Since $|V_1|\geq|V_2|\geq |V_3|$, it follows that $n$ is odd and $|V_1|=|V_2|=\frac{n-1}{2}$.
However, $|V_3|=N-|V_1\cup V_2|\geq (3n-1)/2-(n-1)=(n+1)/2>|V_2|$, a contradiction.
Therefore, suppose $a_1\geq 2$ below.

Let $a_2=|V_1|-\lfloor\frac{a_1}{2}\rfloor+1$.
Then $a_2\geq a_1$.
Otherwise, if $a_2<a_1$, then
\begin{align*}
|V_1|&=a_2+\left\lfloor\frac{a_1}{2}\right\rfloor-1\leq a_1+\left\lfloor\frac{a_2}{2}\right\rfloor-1\leq a_1+\left\lfloor\frac{a_1-1}{2}\right\rfloor-1\\
&\leq n-2|V_2|+\left\lfloor\frac{n-2|V_2|-1}{2}\right\rfloor-1
=n-3|V_2|+\left\lfloor\frac{n-3}{2}\right\rfloor\\
&\leq n-(N-|V_1|)-|V_2|+\left\lfloor\frac{n-3}{2}\right\rfloor\\
&\leq|V_1|-|V_2|<|V_1|,
\end{align*}
a contradiction.
Thus, $2\leq a_1\leq a_2$. By Theorem \ref{path-path}, we have that $|V_1|=r(P_{a_1},P_{a_2})$.
Hence, $G[V_1]$ contains either a monochromatic copy of $P_{a_1}$ with color $1$ or  a monochromatic copy of $P_{a_2}$ with color $3$.

Suppose that $G[V_1]$ contains a monochromatic copy of $P_{a_1}$ with color $1$ (say $w$ is an endpoint of $P_{a_1}$).
Let $U_1=V_1-V(P_{a_1})$ and let $\ell=\min\{|U_1|,|V_2|\}$.
Then there is a monochromatic $P_{2\ell}$ between $U_1$ and $V_2$ with color $1$.
Without loss of generality, suppose that $w_1$ are endpoints of the path $P_{2\ell}$ with $w_1\in V_2$.
Then $P_{2\ell}\cup P_{a_1}\cup \{ww_1\}$ monochromatic path of order $2\ell+a_1$ (see Figure \ref{tpn} (1)).
We only need to verify that $2\ell+a_1\geq n$ below.
If $|U_1|\geq |V_2|$, then $2\ell+a_1=2|V_2|+a_1=n$.
If $|U_1|<|V_2|$, then
$$2\ell+a_1=2|U_1|+a_1=2|V_1|-a_1=2(|V_1|+|V_2|)-n\geq 2\left\lceil\frac{2N}{3}\right\rceil-n=n.$$

\begin{figure}[h]
    \centering
    \includegraphics[width=330pt]{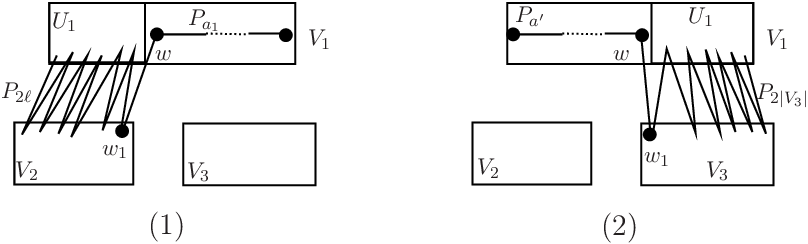}\\
    \caption{Construct a monochromatic $P_n$.} \label{tpn}
\end{figure}

Suppose that $G[V_1]$ contains a monochromatic $P_{a_2}$ with color $3$.
Let $a'=\min\{a_2,|V_1|-|V_3|\}$.
Then we can choose a subpath $P$ of $P_{a_2}$ with $|P|=a'$ (say $w$ is an endpoint of $P$).
Let $U_1=V_1-V(P)$. Then $|U_1|\geq |V_3|$.
Then there is a monochromatic $P_{2|V_3|}$ between $U_1$ and $V_3$ with color $3$.
Without loss of generality, suppose that $w_1$ is the endpoint of the path $P_{2|V_3|}$ with $w_1\in V_3$.
Then $P_{2|V_3|}\cup P_{a_2}\cup \{ww_1\}$ monochromatic path of order $2|V_3|+a'$ (see Figure \ref{tpn} (2)).
We only need to verify that $2|V_3|+a'\geq n$ below.
Note that
\begin{align*}
a_2=|V_1|-\left\lfloor\frac{a_1}{2}\right\rfloor+1=|V_1|-\left\lfloor\frac{n}{2}\right\rfloor+|V_2|+1
=(N-|V_3|)-\left\lfloor\frac{n}{2}\right\rfloor+1\geq n-|V_3|.
\end{align*}
If $a'=a_2$, then $2|V_3|+a'=2|V_3|+a_2\geq n$.
If $a'=|V_1|-|V_3|$, then 
$2|V_3|+a'=|V_1|+|V_3|\geq a_2+|V_3|\geq n$.
\end{proof}

\noindent {\bf Proof of Theorem \ref{main-p5-paths}:} Let $\Gamma$ be an edge-coloring of $K_N$ such that $K_N$ does not contain rainbow $P_5$. From Theorem \ref{P5}, one of $(i),(ii)$ holds if $k\geq 4$, and one of $(iii)-(v)$ holds if $k=4$.
Note that if $N\geq 6$, then $K_{N}$ contains a monochromatic $P_{N-1}$ with color $1$ whenever one of $(ii)-(v)$ holds.

For $2(k-1)\leq n\leq 4(k-2)+1$, let $N=n+1$.
Since $N\geq 6$, $K_{N}$ contains a monochromatic $P_n$ with color $1$ whenever one of $(ii)-(v)$ holds.
By Lemma \ref{bkpn}, $b_k(P_n)=n$. So, $\operatorname{gr}_k(P_5:P_n)\leq n+1$.
Since we can choose a $\Gamma$ of $(ii)$ such that $K_n$ does not contain monochromatic $P_n$, $\operatorname{gr}_k(P_5:P_n)=n+1$.

For $n>4(k-2)+1$, let $N=\lceil\frac{3n-3}{2}\rceil$. By Lemma \ref{bkpn},
$b_k(P_n)=N$.
Since $n\geq 10$, it follows that $N\geq n+1$, and hence $K_N$ contains a monochromatic copy of $P_n$ whenever $\Gamma$ satisfies one of $(ii)-(v)$.
Therefore, $\operatorname{gr}_k(P_5:P_n)=N=\lceil\frac{3n-3}{2}\rceil$.

\noindent {\bf Proof of Theorem \ref{main-p4+-paths}:}
Let $\Gamma'$ be an edge-coloring of $K_{N'}$ such that $K_{N'}$ does not contain rainbow $P_4^+$.
By Theorem \ref{P4+}, if $k=4$, then either $\Gamma'\in \{G_2(N'),G_3(N')\}$ or $(i)$ holds;
if $k\geq 5$, then $(i)$ holds.

Suppose $k\geq 5$. Then $\operatorname{gr}_4(P_4^+:P_n)=b_k(n)$. By Lemma \ref{bkpn},  the result holds.
Thus, we talk about the case $k=4$ below. If $n\geq 6$, then the maximum monochromatic path in $K_{N'}$ is $P_{N'-2}$ when $\Gamma'=G_2(N')$, and is $P_{N'}$ when $\Gamma'=G_3(N')$.
Thus, $\operatorname{gr}_4(P_4^+:P_n)\geq n+2$ when $n\geq 6$.
If $2(k-1)\leq n\leq 4(k-2)+1$, then $b_k(n)=n$, and hence $\operatorname{gr}_4(P_4^+:P_n)= n+2$.
If $n>4(k-2)+1=9$, then $b_k(P_n)=\lceil\frac{3n-3}{2}\rceil\geq n+2$, and hence $\operatorname{gr}_4(P_4^+:P_n)=\lceil\frac{3n-3}{2}\rceil$.

\noindent {\bf Proof of Theorem \ref{main-k13-paths}:}
Let $\Gamma^*$ be an edge-coloring of $K_{N^*}$ such that $K_{N^*}$ does not contain rainbow $K_{1,3}$.
If $k=3$, then either $\Gamma^*\in \mathcal{T}(N^*)$ or $\Gamma^*\in \mathcal{B}_3(N^*)$.
Therefore, $\operatorname{gr}_3(K_{1,3}:P_n)=\max\{t(P_n),b_3(P_n)\}=t(P_n)$.
If $k\geq 4$, then $\Gamma^*\in \mathcal{B}_k(N^*)$ and hence $\operatorname{gr}_k(K_{1,3}:P_n)=b_k(P_n)$.

\section{Proof of Theorem \ref{main-kipas}}\label{sec-kipas}

In order to prove Theorem \ref{main-kipas}, we need the exact values of $b_3(\widehat{K}_n)$ and a upper bound of $t(\widehat{K}_n)$.
The following is the exactly value of $b_3(\widehat{K}_n)$.

\begin{lemma}\label{kipas-1}
If $n\geq 5$ and $n$ is odd, then $b_3(\widehat{K}_n)=\left\lfloor\frac{5n}{2}\right\rfloor$;
if $n\geq 5$ and $n$ is even, then $\left\lfloor\frac{5n}{2}\right\rfloor-1\leq b_3(\widehat{K}_n)\leq\left\lfloor\frac{5n}{2}\right\rfloor$.
\end{lemma}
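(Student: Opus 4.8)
\emph{Lower bound.} The plan is to exhibit, for each parity of $n$, an explicit coloring in $\mathcal{B}_3$ of the appropriate complete graph with no monochromatic $\widehat{K}_n$. Take $|V_1|=n$ with $G[V_1]$ entirely colored~$2$ (so color~$2$ induces $K_n$, which has no $\widehat{K}_n$, and $G[V_1]$ has \emph{no} color-$1$ edge), and partition $V_2$ into three cliques $D_1,D_2,D_3$, each colored $1$ internally and with all edges between distinct $D_i$'s colored~$3$; all $V_1$--$V_2$ edges are colored~$1$ as $\mathcal{B}_3$ requires. One checks three things. First, color~$3$ restricted to $V_2$ is the complete tripartite graph $K_{|D_1|,|D_2|,|D_3|}$; if the $|D_i|$ are chosen with $|D_i|\le\lceil n/2\rceil$ and $\sum_i|D_i|-\min_i|D_i|\le n-1$, this graph has maximum degree $\le n-1$, hence no vertex of degree $\ge n$ and no $\widehat{K}_n$. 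Second, color~$1$ inside $V_2$ is $D_1\sqcup D_2\sqcup D_3$, with longest path $\le\lceil n/2\rceil<n$. Third, one rules out a color-$1$ kipas by examining all centers: a center in $V_1$ has color-$1$ neighborhood exactly $V_2$, on which color~$1$ is $D_1\sqcup D_2\sqcup D_3$ (no $P_n$); a center $v\in D_i$ has color-$1$ neighborhood $V_1\cup(D_i\setminus v)$, on which color~$1$ is $K_{|D_i|-1}\vee\overline{K_n}$, whose longest path is $2(|D_i|-1)+1\le n-1$ (by inspection, or via Lemma~\ref{mutil-HC}). For $n$ odd one takes $|D_1|=|D_2|=|D_3|=(n-1)/2$, giving $n+3(n-1)/2=\lfloor5n/2\rfloor-1$ vertices; for $n$ even one takes $|D_1|=n/2$, $|D_2|=|D_3|=n/2-1$, giving $n+(3n/2-2)=\lfloor5n/2\rfloor-2$ vertices, which loses a vertex precisely because of the divisibility obstruction $|D_i|\le n/2$ on the color-$3$ tripartite graph — and this is exactly why the even case is only pinned down to within one.

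\emph{Upper bound.} Let $\Gamma\in\mathcal{B}_3(N)$ with $N=\lfloor5n/2\rfloor$ and parts $V_1,V_2$, say $|V_1|\le|V_2|$, so $|V_2|\ge\lceil N/2\rceil$. If the color-$3$ graph on $V_2$ contains $\widehat{K}_n$, or the color-$2$ graph on $V_1$ does, we are done, so assume not. Then color~$3$ on $V_2$ (and color~$2$ on $V_1$) has no $K_{n+1}$, so the color-$1$ graph on $V_2$ (resp. $V_1$) has independence number $\le n$; deleting a maximum color-$1$ matching shows $\nu(\text{color }1\text{ on }V_2)\ge(|V_2|-n)/2$ and similarly for $V_1$. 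One also applies Theorem~\ref{class} (and, when $|V_2|-n$ lies in the relevant range, Lemma~\ref{lem-1}) to the $\{1,3\}$-coloring of (a suitable order-$(n+\lfloor n/4\rfloor)$ subset of) $V_2$: either a color-$3$ $\widehat{K}_n$ appears, or one obtains a color-$1$ linear forest in $V_2$ of controlled size and component number. The plan is then to manufacture a color-$1$ $\widehat{K}_n$: since every $V_1$--$V_2$ edge is color~$1$, a vertex of $V_1$ is simultaneously a valid center (color-$1$-adjacent to all of $V_2$) and a universal "connector" able to splice any two vertices of $V_2$; one stitches the edges of a color-$1$ matching/linear forest of $V_2$ (and, when $|V_1|$ is large, of $V_1$) together with spare $V_2$-vertices into a color-$1$ path on exactly $n$ vertices lying inside one color-$1$ neighborhood, and then uses that neighborhood's owner as the kipas center.

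\emph{Case split and the role of $N$.} When $|V_1|$ is small, $|V_2|\ge N-|V_1|$ is large, so $\nu(\text{color }1\text{ on }V_2)\ge(|V_2|-n)/2\ge n/3$ and a bounded number of $V_1$-connectors already splices enough matching edges to reach length $n$; when $|V_1|$ is not small, one either finds a vertex of $V_1$ of large color-$1$ degree inside $V_1$ (giving many connectors adjacent to a common center) or else the color-$2$ graph on $V_1$ has minimum degree so close to $|V_1|\ge n+1$ that it contains $\widehat{K}_n$ — a contradiction. The counting in the stitching step is where the exact threshold $N=\lfloor5n/2\rfloor$ is consumed. I expect this stitching to be the main obstacle: one must carefully balance the number of available connector vertices against the number of components to be joined, avoid reusing the chosen center, and keep every vertex of the assembled $P_n$ inside the center's color-$1$ neighborhood — this is the dual of the delicate neighborhood-path estimates ($K_{|D_i|-1}\vee\overline{K_n}$ has no $P_n$) in the lower-bound construction, and it is what forces both the precise bound and the case analysis on $|V_1|$.
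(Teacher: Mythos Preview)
Your lower bound construction is correct and is essentially the paper's own construction, up to swapping the labels of colors $2$ and $3$ (the paper colors $G[A]$ with $3$ and the inter-$B_i$ edges with $2$, you do the reverse).

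The upper bound plan, however, has a genuine gap: you insist on placing the kipas center in the \emph{smaller} part $V_1$ and on using other $V_1$-vertices as connectors. This collapses precisely at the extremal configuration. Take $|V_1|=n$ with $G[V_1]$ entirely in color~$2$ (so color~$2$ is $K_n$ --- no $\widehat{K}_n$, and also your ``min-degree-so-close-to-$|V_1|$'' fallback does not fire, since $|V_1|\not\ge n+1$). Then any center $u_0\in V_1$ has $N_{G_1}(u_0)\cap V_1=\emptyset$, so there are \emph{no} connectors at all; a color-$1$ $\widehat{K}_n$ centered at $u_0$ would require a color-$1$ $P_n$ lying entirely in $V_2$. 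But here $|V_2|=\lfloor 5n/2\rfloor-n<2n-1=r(P_n,\widehat{K}_n)$, so neither your matching bound $\nu\ge(|V_2|-n)/2$ nor applying Lemma~\ref{lem-1} to a subset of $V_2$ yields such a path (splicing forest components still needs connectors you do not have). Incidentally, the fallback itself is not sound as stated: high minimum degree alone does not force $\widehat{K}_n$ (e.g.\ $K_{n,n}$ has minimum degree $n$ and no $\widehat{K}_n$).

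The paper repairs this by reversing the roles: the center $u_0$ is taken in the \emph{larger} part $B$, as the center of a color-$1$ star found via the star--kipas Ramsey number $r(K_{1,\lfloor n/2\rfloor},\widehat{K}_n)$ (Case~2) or $r(K_{1,b},\widehat{K}_n)$ (Case~3, Theorem~\ref{star-kipath}); Lemmas~\ref{clm-0}--\ref{lem-1} are applied to the \emph{smaller} part $A$ (where $|A|-n\le\lfloor n/4\rfloor$ holds automatically), and the leaves of that star in $B$ serve as the connectors. The case split is on $|A|$ versus $n/2$ and $n$, with the small-$|A|$ case handled directly by $r(P_n,\widehat{K}_n)$ rather than a matching count.
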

\begin{proof}
The following claim indicates the lower bounds of $b_3(\widehat{K}_n)$.
\begin{claim}
Let $N'=\lfloor(5n)/2\rfloor-1$ if $n$ is odd, and let $N'=\lfloor(5n)/2\rfloor-2$ if $n$ is even.
There exists an $3$-edge-coloring $\Gamma\in \mathcal{B}_3(n)$ of $G=K_{N'}$ such that $G$ does not contain monochromatic  $\widehat{K}_n$.
\end{claim}
\begin{proof}
We construct $\Gamma$ as follows. 
\begin{itemize}
\item Partition $V(G)$ into four parts $A,B_1,B_2,B_3$ such that $|A|=n$, $|B_1|=|B_2|=|B_3|=\frac{n-1}{2}$ if $n$ is odd, and $|B_1|=\frac{n}{2}$ and $|B_2|=|B_3|=\frac{n}{2}-1$ if $n$ is even.

\item Let $B=B_1\cup B_2\cup B_3$. Each edge of $G[A]$ is colored by $3$, each edge of $E_G[A,B]$ is colored by $1$, each edge between the three parts $B_1,B_2,B_3$ is colored $2$, and each edge in $B_1$, $B_2$ and $B_3$ are colored by $1$.
\end{itemize}
Clearly, this edge-coloring $\Gamma$ belongs to $\mathcal{B}_3(M)$ and the two parts are $A$ and $B$.
It is easy to verify that there is no monochromatic  $\widehat{K}_n$ with color $2$ or $3$.
Now we prove that there is no monochromatic $\widehat{K}_n$ with color $1$.
Suppose to the contrary that there is a monochromatic $\widehat{K}_n=v\vee P$ with color $1$, where $P$ is a monochromatic $P_n$ with color $1$ and $v$ is the center of the $\widehat{K}_n$.
Let $G_1$ be the subgraph of $G$ induced by all edges with color $1$.
Since $G_1[B]$ does not contain monochromatic $P_n$, $v\notin A$.
Thus, $v\in B_i$ for some $i\in[3]$. Then $P$ is a subgraph $G_1[(B_i-v)\cup A]$.
However, since $G_1[(B_i-v)\cup A]$ is a complete bipartite graph with two parts $A$ and $B_i-v$, and since $|B_i-v|\leq \frac{n-1}{2}-1$, it follows that the maximum path in $G_1[(B_i-v)\cup A]$ is $2|B_i-v|+1\leq n-2$, a contradiction.
\end{proof}

Now we prove the upper bound.
Suppose that $N=\left\lfloor\frac{5n}{2}\right\rfloor$. Let $G$ be an edge-colored $K_N$ and the edge-coloring belong to $\mathcal{B}_3(N)$. Then $V(G)$ can be partitioned into two parts $A,B$ with $2\leq |A|\leq |B|$ such that each edge of $G[A]$ is colored with $1$ or $3$, each edge of $G[B]$ is colored with $1$ or $2$, and each edge between $A$ and $B$ is colored with $1$.

%\begin{enumerate}
%\setcounter{case}{0}
%\begin{case}\label{case-1}
{\bf Case 1} $|A|<\frac{n}{2}$.\label{case-1}
%\end{case}
%\setcounter{subcase}{0}

In this case, $|B|\geq 2n=r(P_n,\widehat{K}_n)$ by Theorem \ref{path-kipath-1}.
Then there is either a monochromatic copy of $P_n$ with color $1$ or a monochromatic copy of $\widehat{K}_n$ with color $2$ in $G[B]$, and hence there is either a monochromatic copy of $\widehat{K}_n$ with color $2$, or a monochromatic copy of $\widehat{K}_n$ with color $1$ which is obtained form the monochromatic copy of $P_n$ with color $1$ and a vertex $v\in A$ by adding all possible edges between $v$ and $V(P_n)$.

%\setcounter{case}{1}
%\begin{case}\label{case-2}
{\bf Case 2} $\lceil\frac{n}{2}\rceil\leq |A|\leq n$. \label{case-2}
%\end{case}
%\setcounter{subcase}{1}

Clearly, $|B|\geq \left\lfloor\frac{3n}{2}\right\rfloor\geq r(K_{1,\left\lfloor n/2\right\rfloor},\widehat{K}_n)$.
Thus, there is either a monochromatic copy of $K_{1,\left\lfloor n/2\right\rfloor}$ with color $1$ or a monochromatic copy of $\widehat{K}_n$ with color $2$ in $G[B]$.
If the latter holds, then the result follows.
If the former holds, then let $u_0$ be the center and $U$ be the set of leaves of the star $K_{1,\left\lfloor n/2\right\rfloor}$.
Since $|U|=\left\lfloor n/2\right\rfloor$, $|A|\geq \lceil\frac{n}{2}\rceil$ and each edge between $U$ and $A$ is colored by $1$, it follows that there is a monochromatic path $P=P_n$ with color $1$ and $V(P)\subseteq U\cup A$, and hence $u_0\vee P$ is a monochromatic copy of $\widehat{K}_n$ with color $1$.

%\setcounter{case}{2}
%\begin{case}\label{case-3}
{\bf Case 3} $|A|>n$.\label{case-3}
%\end{case}
%\setcounter{subcase}{2}

Clearly, $|B|>n$ also holds.
Suppose that there is no monochromatic $\widehat{K}_n$ with color $2$ or $3$.
We will show that there is a monochromatic $\widehat{K}_n$ with color $1$ below.
Let $|A|=n+a$ and $|B|=n+b$, where $a,b\geq 1$.
Since $a+b= \left\lfloor\frac{n}{2}\right\rfloor$, it follows that $a,b<\frac{n}{2}$.
Since $|A|\leq |B|$, we have that $a\leq b$ and $a\leq \left\lfloor\frac{n}{4}\right\rfloor$.
By Theorem \ref{star-kipath}, $|B|\geq r(K_{1,b}, \widehat{K}_n)$ and hence there is a monochromatic copy of $K_{1,b}$ with color $1$ in $G[B]$.
Let $u_0$ be the center and let $U$ be the set of leaves in $K_{1,b}$.
Then $|U|=b$.

\begin{figure}[h]
    \centering
    \includegraphics[width=150pt]{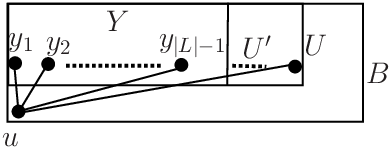}\\
    \caption{The subsets $U'$ and $Y$ of $B$.} \label{Fig-B}
\end{figure}

By Lemma \ref{clm-0} and Lemma \ref{lem-1}, one of the following statements holds.
\begin{itemize}
  \item $a\geq 3$ and there is a $3$-linear forest $L$ in $G[A]$ with color $1$ such that $|L|-\mu_L\geq 2a$, where $\mu_L$ is the number of components of $L$.
  \item $a=1$ and there is either a $2P_2$ or a $P_3$ in $G[A]$ with color $1$. 
  \item $a=2$ and there is either a $P_2\cup P_4$ (the vertex disjoint union of $P_2$ and $P_4$), or a $P_5$, or a $2P_3$ in $G[A]$ with color $1$.
\end{itemize}
In above cases, $|L|-\mu_L\geq 2a$.
If $a\geq 3$, then we can choose the $3$-linear forest $L$ such that $|L|-\mu_L=2a+\varepsilon$, where $0\leq \varepsilon\leq 2$.
\begin{claim}\label{clm-last}
$|L|\leq 3a+3$ when $a\geq 3$ and $|L|\leq 2a+2$ when $a\in\{1,2\}$.
Moreover, $\mu_L\leq |U|+1$.
\end{claim}
\begin{proof}
If $a\geq 3$, then $L$ is a $3$-linear forest and hence $\mu_L\leq |L|/3$.
Thus, $|L|\leq 3a+3$. Since $a\leq b$, it follows that 
$$\mu_L\leq \frac{|L|}{3}\leq a+1\leq b+1=|U|+1.$$
If $a=1$ or $a=2$, it is easy to verify that $|L|\leq 2a+2$ and $\mu_L\leq 2$. Hence, $\mu_L\leq 2\leq |U|+1$, since $|U|=b\geq 1$.
\end{proof}
By Claim \ref{clm-last}, we can choose $\mu_L-1$ vertices of $U$ (say $y_1,\ldots,y_{\mu_L-1}$).
Let $U'=U-Y$, where $Y=\{y_1,\ldots,y_{\mu_L-1}\}$.

\begin{figure}[h]
    \centering
    \includegraphics[width=180pt]{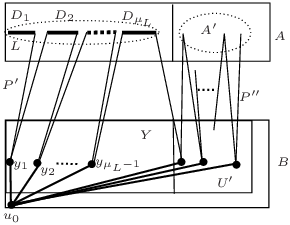}\\
    \caption{The construction of $P$.} \label{p-2}
\end{figure}

\begin{claim}\label{fact-5-1}
$|A|-|L|> |U'|$.
\end{claim}
\begin{proof}
If $a\geq 3$, then $|L|\leq 3a+3$ and
\begin{align*}
|A|-|L|\geq n+a-(3a+3)=n-2a-3\geq n-2\left(\lfloor n/2\rfloor-b\right)-3\geq 2b-3.
\end{align*}
Since $b\geq a\geq 3$, $|A|-|L|\geq b>|U'|$.
If $a\in\{1,2\}$, then 
$|L|\leq 2a+2$ and 
$$|A|-|L|\geq n+a-(2a+2)=n-a-2=n-\left(\lfloor n/2\rfloor-b\right)-2\geq b>|U'|.$$
\end{proof}
By Claim \ref{fact-5-1}, we can choose a subset $A'$ of $A-V(L)$ such that $|A'|=|U'|$.
We assume that $L$ consists of $\mu_L$ paths $D_1,D_2,\ldots,D_{\mu_L}$, and the two endpoints of $D_i$ are $z_i^1$ and $z_i^2$.
Observe that we can construct a monochromatic path $P'$ that is obtained from $L$ and $Y$ by connecting $y_i$ and $z_i^2,z_{i+1}^1$, where $i\in[\mu_L-1]$.
On the other hand, 
since the edges between $U'$ and $A'$ are colors by $1$, we can get a monochromatic $P''$ of order $2|U'|$ with one endpoint in $A'$ and the other endpoint in $U'$ (say $z$ is the endpoint of $P''$ belonging to $U'$).   
Then there is a monochromatic path $P$ with color $1$  that is obtained from $P'$ and $P''$ by adding the edge $zz_{\mu_L}^2$. Hence, $u_0\vee P$ is monochromatic kipas (see Figure \ref{p-2}).
Note that
\begin{align*}
|P|&=|L|+|Y|+|U'|+|A'|=|L|+|Y|+2(b-|Y|)\\
&=2b+|L|-|Y|=2b+|L|-\mu_L+1.
\end{align*}
If $a\geq 3$, then $|L|-\mu_L=2a+\varepsilon$.
Hence, $|P|=2b+2a+\varepsilon+1\geq n$, and $u_0\vee P$ contains a monochromatic $\widehat{K}_n$ with color $1$.
If $a\in\{1,2\}$, it is easy to check that $|L|-\mu_L= 2a$.
Hence, $|P|=2b+2a+1\geq n$.
\end{proof}

The following is an upper bound of $t(\widehat{K}_n)$.

\begin{lemma}\label{kipas-2}
If $n\geq 5$, then $t(\widehat{K}_n)\leq \left\lfloor\frac{5n}{2}\right\rfloor$ if $n$ is odd, and $t(\widehat{K}_n)\leq \left\lfloor\frac{5n}{2}\right\rfloor-1$ if $n$ is even.
\end{lemma}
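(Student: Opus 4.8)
The plan is to mimic the structure of the proof of Lemma \ref{kipas-1}, since $\mathcal{T}(n)$ is a three-part refinement of $\mathcal{B}_3(n)$. Let $N=\lfloor 5n/2\rfloor$ if $n$ is odd and $N=\lfloor 5n/2\rfloor-1$ if $n$ is even, and let $G=K_N$ be colored by some $\Gamma\in\mathcal{T}(N)$ with nonempty parts $V_1,V_2,V_3$, where the edges inside $V_i$ use colors $\{1,i\}$ for $i=1$ and colors $\{1,j\}$ between appropriate pairs, following the definition of $\mathcal{T}(n)$; after renaming, edges between $V_1,V_2$ are color $1$, between $V_2,V_3$ are color $2$, between $V_1,V_3$ are color $3$, edges inside $V_1$ use $\{1,3\}$, inside $V_2$ use $\{1,2\}$, inside $V_3$ use $\{2,3\}$. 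Assume for contradiction that there is no monochromatic $\widehat{K}_n$. First I would observe that the ``color $1$'' graph $G_1$ contains a complete bipartite graph $K_{|V_1|,|V_2|}$, while $G_2$ lives on $V_2\cup V_3$ and $G_3$ lives on $V_1\cup V_3$; so the situation always reduces to analyzing two overlapping pieces, each looking like the $\mathcal{B}_3$ situation but with an extra ``bipartite-between-parts'' structure.

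The key step is a case analysis on which part is large. Without loss of generality suppose $|V_3|$ is a maximum part (the case of $V_1$ or $V_2$ being largest is symmetric up to recoloring, so I would just do one representative). If $|V_1\cup V_2|$ is large — at least $n$ — then $G_1\supseteq K_{|V_1|,|V_2|}$ has a Hamiltonian path (by Lemma \ref{mutil-HC} applied to the bipartite case), giving a color-$1$ $P_n$, and then I need one more vertex in $V_3$ adjacent in color... but $V_3$-to-$V_1$ is color $3$ and $V_3$-to-$V_2$ is color $2$, so this does not immediately extend; instead I would look for a color-$1$ star centered inside $V_1$ or $V_2$ whose leaves lie in the opposite part, use $|V_1\cup V_2|\geq n$ to grow the spine, and use the slack to place the center. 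When instead $V_3$ is very large, I use the path–kipas Ramsey numbers (Theorem \ref{path-kipath-1}) and star–kipas numbers (Theorem \ref{star-kipath}) inside $G[V_3]$ (whose edges use colors $2,3$), exactly as in Cases 1–3 of Lemma \ref{kipas-1}: either we get a monochromatic $\widehat{K}_n$ inside $V_3$, or a long monochromatic path/star in color $2$ or $3$ inside $V_3$, and then I extend it using the $n+$ available vertices of $V_1\cup V_2$ connected to $V_3$ monochromatically (color $3$ from $V_1$, color $2$ from $V_2$), picking whichever color matches. The counting should give $N=\lfloor 5n/2\rfloor$ (odd) or $\lfloor 5n/2\rfloor-1$ (even); the slightly smaller value for even $n$ comes from the fact that in the extremal colorings all three parts can be made of size roughly $(n-1)/2$, and one saves a vertex when $n$ is even because $\lfloor n/2\rfloor$ leaves already suffice for a $P_n$-spine when paired against a part of size $\lceil n/2\rceil$.

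The main obstacle I expect is the ``middle'' regime where none of $V_1,V_2,V_3$ is either tiny or dominant — say all three are of size $\Theta(n)$ but each individually less than $2n$, so Theorems \ref{path-kipath-1} and \ref{star-kipath} inside a single part do not directly produce a monochromatic $\widehat{K}_n$. There one must genuinely combine two parts: for instance, find a long color-$1$ path spanning $V_1\cup V_2$, note that its $i$-th vertex alternates between $V_1$ and $V_2$, and then search for a single vertex $v$ (in $V_3$, or inside $V_1$ or $V_2$) dominating enough of that path in one color — which forces a careful argument because $v\in V_3$ sees $V_1$ in color $3$ and $V_2$ in color $2$, so it can never dominate an alternating path monochromatically. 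The resolution is to instead route the $P_n$ entirely through $V_3\cup V_1$ (all color-$3$ edges available between them, plus color-$3$ edges inside $V_1$ or $V_3$) or through $V_3\cup V_2$, and to show that in the no-monochromatic-$\widehat{K}_n$ regime the color-$3$ graph on $V_1\cup V_3$ and the color-$2$ graph on $V_2\cup V_3$ cannot both be ``path-poor and star-poor'', using Theorem \ref{path-kipath-1}/\ref{star-kipath} on the induced subgraphs together with the size bound $|V_1|+|V_2|+|V_3|=N$. Balancing these three inequalities is the calculation that pins down the constant $5/2$ and the parity correction, and that is where I would spend the real effort.
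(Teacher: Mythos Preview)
Your outline has the right overall shape --- case analysis on part sizes, with an easy extreme case and a hard ``middle regime'' --- but it is not a proof, and the hard case is not just a calculation to be filled in: it needs a mechanism you have not identified. Saying that the color-$3$ graph on $V_1\cup V_3$ and the color-$2$ graph on $V_2\cup V_3$ ``cannot both be path-poor and star-poor'' is the desired conclusion, not an argument; to ``balance three inequalities'' you first need three concrete quantities, and your proposal does not produce them.

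The paper's device is specific and does not emerge from your plan. Taking $|A|\ge|B|\ge|C|$ (so $A$ is the part whose interior uses colors $\{1,3\}$), one fixes a \emph{maximum color-$2$ star} $S$ inside $C$ (center $u_0$, with $c_1$ leaves) and a \emph{maximum color-$2$ path} $P^*$ inside $B$ (of order $b_1$). Since $u_0\in C$ is joined to all of $B$ in color $2$, one can splice $P^*$ with a matching between $B\setminus V(P^*)$ and the leaves of $S$ to get a color-$2$ kipas centered at $u_0$; this forces $b_1+2c_1\le n-1$. Dually, the maximality of $P^*$ and $S$ yields (via Theorems \ref{star-path} and \ref{star-star}) a color-$1$ star $K_{1,b-b_1}$ in $B$ and a color-$3$ star $K_{1,c-c_1-1}$ in $C$, whose centers can serve as kipas centers for paths routed through $A$. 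This pins down two thresholds $a_1=n-2b+2b_1$ and $a_2=n-2c+2c_1+2$: a color-$1$ $P_{a_1}$ or a color-$3$ $P_{a_2}$ inside $A$ would finish, and a single application of $r(P_{a_1},P_{a_2})$ in $G[A]$ closes the argument. The simultaneous use of an extremal star in $C$ and an extremal path in $B$, both in color $2$, is the missing idea.

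A smaller issue: your ``easy'' case is also not correct as stated. Having $|V_1\cup V_2|\ge n$ does not give a color-$1$ $P_n$ in $K_{|V_1|,|V_2|}$ when one side is tiny. The paper instead dispatches the easy case by noting that if the largest part $A$ satisfies $|A|\ge\lfloor 3n/2\rfloor=r(P_n,P_n)$, then $G[A]$ (colors $\{1,3\}$) already contains a monochromatic $P_n$, and any vertex of $B$ or $C$ (joined to $A$ entirely in color $1$ or $3$ respectively) supplies the center.
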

\begin{proof}
Suppose that $N=\left\lfloor\frac{5n}{2}\right\rfloor$ if $n$ is odd, and $N=\left\lfloor\frac{5n}{2}\right\rfloor-1$ if $n$ is even. Let $G=K_N$.
Choose an edge-coloring of $\mathcal{T}(G)$ with parts $A,B,C$.
Without loss of generality, let $|A|\geq |B|\geq |C|$ and let $|A|=a, |B|=b$ and $|C|=c$.
Then $a\geq \lceil\frac{5n}{6}\rceil-1$.

If $|A|\geq \left\lfloor\frac{3n}{2}\right\rfloor$, then $a\geq r(P_n,P_n)$, and hence there is a monochromatic copy of $P_n$ in $G[A]$ colored by $1$ or $3$.
Since $B,C$ are both nonempty sets, it follows that there is a monochromatic copy of $\widehat{K}_n$ in $G$.
Thus, suppose that $|A|<\left\lfloor\frac{3n}{2}\right\rfloor$.
Then $b+c\geq n$ and $|B|=b\geq (b+c)/2\geq \frac{n}{2}$.
The proof will be given by contradiction, that is, $G$ does not contain a monochromatic copy of $\widehat{K}_n$.
Suppose that $H_1,H_2$ are two subgraphs induced by edges in $G[B]$ and $G[C]$ with color $2$, respectively.
Let $S$ be a star in $H_2$ such that $e(S)$ is maximum. Let $u_0$ be the center of $S$. Let $P^*$ be a path in $H_1$ such that $e(P^*)$ is maximum.
Let $X=V(S)-\{u_0\}$, $|X|=c_1$ and $|P^*|=b_1$.
Then $0\leq c_1\leq c-1$ and $1\leq b_1\leq b$.

\begin{claim}\label{k13-clm-1}
 $G[B]$ and $G[C]$ contain a monochromatic copies of $K_{1,b-b_1}$ and $K_{1,c-c_1-1}$ with color $1,3$, respectively.
\end{claim}
\begin{proof}
If $b_1<b$ and $1\leq c_1\leq c-3$, then $b\geq r(P_{b_1+1},K_{1,b-b_1})$ by Theorem \ref{star-path},  and $c\geq r(K_{1,c_1+1},K_{1,c-c_1-1})$ by Theorem \ref{star-star}.
For $c_1=0$, since $K_{1,c_1+1}$ is an edge, we also have that  $c\geq r(K_{1,c_1+1},K_{1,c-c_1-1})$.
Since $G[B]$ does not contain a monochromatic copy of $P_{b_1+1}$ and $G[C]$ does not contain a monochromatic copy of $K_{1,c_1+1}$ with color $2$ by the choice of $P^*$ and $S$, it follows that $G[B]$ and $G[C]$ contain a monochromatic copies of $K_{1,b-b_1}$ and $K_{1,c-c_1-1}$ with color $1,3$, respectively.
If $b_1=b$ or $c-2\leq c_1\leq c-1$, then $G[B]$ and $G[C]$ contain a monochromatic copies of $K_{1,b-b_1}$ and $K_{1,c-c_1-1}$ with color $1,3$, respectively.
So, $G[B]$ and $G[C]$ contain a monochromatic copies of $K_{1,b-b_1}$ and $K_{1,c-c_1-1}$ with color $1,3$, respectively.
\end{proof}

We use $S^{B,1}$ and $S^{C,3}$ to denote monochromatic copies of $K_{1,b-b_1},K_{1,c-c_1-1}$ with color $1,3$, respectively.
Moreover, suppose that the center of $S^{B,1}$ and $S^{C,3}$ are $u_1,u_2$, respectively.

\begin{claim}\label{k13-clm-2}
$b-b_1< \lfloor\frac{n}{2}\rfloor$ and $c-c_1-1<\lfloor\frac{n}{2}\rfloor$.
Moreover, if $b\geq \frac{n}{2}$, then $c_1<\lfloor\frac{n}{2}\rfloor$. 
\end{claim}
\begin{proof}
If $b-b_1\geq \lfloor\frac{n}{2}\rfloor$, then  since $|A|\geq \frac{n}{2}$, there is a monochromatic $P_n$ with color $1$ between $V(S)-u_1$ and $A$, and hence $G$ contains a monochromatic copy of $\widehat{K}_n$ with color $1$, a contradiction.
As the same reason, we have $c-c_1-1<\lfloor\frac{n}{2}\rfloor$; otherwise, there is a monochromatic $\widehat{K}_n$ with color $3$, a contradiction.
Moreover, if $b\geq \frac{n}{2}$, then $c_1<\lfloor\frac{n}{2}\rfloor$; otherwise, there is a monochromatic $\widehat{K}_n$ with color $2$, a contradiction.
\end{proof}

Let $a_1=n-2b+2b_1$ and $a_2=n-2c+2c_1+2$.
Then by Claim \ref{k13-clm-2}, $a_1,a_2\geq 2$.
We have the following claim.

\begin{claim}\label{fact-00}
There is no monochromatic $P_{a_2}$ with color $3$ in $G[A]$.
\end{claim}
\begin{proof}
We first prove that $a-n+c-c_1-1\geq 0$. If $a\geq n$, then the result follows.
If $b<\frac{n}{2}$, then $b+c_1\leq b+c-1\leq 2b-1\leq n-2$ and hence 
\begin{align*}
a-n+c-c_1-1&\geq\left(\left\lfloor\frac{5n}{2}\right\rfloor-1-b\right)-n-c_1-1\\
&= \left\lfloor\frac{3n}{2}\right\rfloor-(b+c_1)-2\\
&\geq \left\lfloor\frac{3n}{2}\right\rfloor-n\geq 0.
\end{align*}
For the case $a<n$ and $b\geq\frac{n}{2}$, by Claim \ref{k13-clm-2}, $c_1\leq\lfloor\frac{n}{2}\rfloor-1$.
Moreover, $\frac{n}{2}\leq b<n$ since $a\geq b$.
Let $b=n-q$. Then $q\geq1$.
So,
$$a-n+c-c_1-1\geq (\lfloor 5n/2\rfloor-1-b)-n-c_1-1 =\lfloor n/2\rfloor +q-c_1-2\geq 0,$$ as desired.

Assume, to the contrary, that $G[A]$ contains a monochromatic $P=P_{a_2}$ with color $3$  (say one endpoint of $P$ is $w$).
Then 
$$|A-V(P)|\geq a-n+2c-2c_1-2=(a-n+c-c_1-1)+(c-c_1-1)\geq c-c_1-1.$$
We can choose a subset $A'$ of $A-V(P)$ with $|A'|=c-c_1-1$.
Since $|A'|=|V(S^{C,3})-u_2|$, there is a monochromatic $P'$  of order $2|A'|$  between $A'$ and $V(S^{C,3})-u_2$ (say one endpoint of $P'$ is $w'$).
Then $P''=P\cup P'\cup \{ww'\}$ is a monochromatic $P_n$ with color $3$, and hence $u_2\vee P''$ a monochromatic $\widehat{K}_n$ with color $3$, a contradiction.
\end{proof}

{\bf Case 1} $c_1+b\geq n$.

The following claim is useful for the calculation.
\begin{claim}\label{fact-1}
$b_1+2c_1\leq n-1$.
\end{claim}
\begin{proof}
Suppose, to the contrary, that $b_1+2c_1\geq n$.
Then $n-2c_1\leq b_1$.
We can choose a subpath $P'$ of $P^*$ such that $|P'|=n-2c_1$ (say one endpoint of $P'$ is $a$).
Since $c_1+b\geq n$, it follows that $|B-V(P')|=b-(n-2c_1)\geq c_1$.
Choose a subset $Y$ of $B-V(P')$ such that $|Y|=|X|=c_1$ (recall that $X$ is the set of leaves of the maximum star $S$ in $H_2$).
Then there is a monochromatic path $P''$ with color $2$ between $X$ and $Y$ (say $x\in X$ is an endpoint of $P''$), and so $u_0\vee (P'\cup P''\cup\{ax\})$ is a monochromatic copy of $\widehat{K}_n$ with color $2$, a contradiction.
\end{proof}

We have proved that $G[A]$ does not contain a monochromatic copy of $P_{a_2}$ with color $3$ in Claim \ref{fact-00}.
In this case, we have the following claim.

\begin{claim}\label{fact-0}
There is no monochromatic copy of $P_{a_1}$ with color $1$ in $G[A]$.
\end{claim}
\begin{proof}
Assume, to the contrary, that $G[A]$ contains a monochromatic copy of $P=P_{a_1}$ with color $1$ (say one endpoint of $P$ is $w$).
Then $|A-V(P)|\geq a-n+2b-2b_1$.
We first prove that $a-n+b-b_1\geq 0$.
Since $c-c_1-1<\lfloor\frac{n}{2}\rfloor$ by Claim \ref{k13-clm-2}, it follows that $c\leq\lfloor\frac{n}{2}\rfloor+c_1$, and hence
\begin{align*}
a-n+b-b_1&\geq \left(\left\lfloor\frac{5n}{2}\right\rfloor-1-c\right)-n-b_1
=\left\lfloor\frac{3n}{2}\right\rfloor-(c+b_1)-1\\[0.1cm]
&\geq  \left\lfloor\frac{3n}{2}\right\rfloor-\left(\left\lfloor\frac{n}{2}\right\rfloor
+c_1+b_1\right)-1\\[0.1cm]
&=n-[(b_1+2c_1)-c_1]-1\\[0.1cm]
&\geq n-[(n-1)-c_1]-1=c_1\geq 0,
\end{align*}
as desired.
Recall that 
$$|A-V(P)|\geq a-n+2b-2b_1=(a-n+b-b_1)+(b-b_1)\geq b-b_1.$$
We can choose a subset $A'$ of $A$ such that $|A'|=b-b_1$.
Since $|A'|=|V(S^{B,1})-u_1|$, there is a monochromatic $P'$  of order $2|A'|$  between $A'$ and $V(S^{B,1})-u_1$ (say one endpoint of $P'$ is $w'$).
Then $P''=P\cup P'\cup \{ww'\}$ is a monochromatic $P_n$ with color $1$, and hence $u_1\vee P''$ a monochromatic $\widehat{K}_n$ with color $1$, a contradiction.
\end{proof}

By Claims \ref{fact-00} and \ref{fact-0}, we have $|A|=a<r(P_{a_1},P_{a_2})$.
However, if $a_1>a_2$, then
\begin{align*}
r(P_{a_1},P_{a_2})&=a_1+\left\lfloor\frac{a_2}{2}\right\rfloor-1
=\left\lfloor\frac{3n}{2}\right\rfloor-(b+c)-b+2b_1+c_1\\[0.1cm]
&\leq\left\lfloor\frac{3n}{2}\right\rfloor-\left(\left\lfloor\frac{5n}{2}\right\rfloor-1-a\right)
-b+2b_1+c_1\\[0.1cm]
&=a-n-(b-b_1)+(b_1+c_1)+1.
\end{align*}
Since $b\geq b_1$ and $b_1+2c_1<n$, we have $r(P_{a_1},P_{a_2})\leq a$, a contradiction.
If $a_1\leq a_2$, then
\begin{align*}
r(P_{a_1},P_{a_2})&=a_2+\left\lfloor\frac{a_1}{2}\right\rfloor-1
=\left\lfloor\frac{3n}{2}\right\rfloor-(b+c)-c+2c_1+b_1+1\\[0.1cm]
&\leq\left\lfloor\frac{3n}{2}\right\rfloor-\left(\left\lfloor\frac{5n}{2}\right\rfloor-1-a\right)
-c+(2c_1+b_1)+1\\[0.1cm]
&\leq (a-n)-c+(n-1)+2\leq a,
\end{align*}
a contradiction.

{\bf Case 2} $c_1+b<n$.

By Claim \ref{fact-00}, $G[A]$ does not contain monochromatic $P_{a_2}$ with color $3$.
Let 
$$a'=a-a_2+1=a-n+2c-2c_1-1.$$
Since $a\geq r(P_{a_2},K_{1,a'})$, it follows that $G[A]$ contains a monochromatic copy of $S^*=K_{1,a'}$ with color $1$.
Note that
$$
a'\geq\left(\left\lfloor\frac{5n}{2}\right\rfloor-1-b-c\right)-n+2c-2c_1-1
=\left\lfloor\frac{3n}{2}\right\rfloor-b+c-2c_1-2.
$$
Since $c_1+b<n$, it follows that $a'\geq \lfloor\frac{n}{2}\rfloor-c_1+c-1\geq \lfloor\frac{n}{2}\rfloor$.
Let $u^*$ be the center of $S^*$ and $U^*$ be the set of leaves of $S^*$.
Then $|U^*|=a'\geq \lfloor\frac{n}{2}\rfloor$.
Since $b\geq \frac{n}{2}$, there is a monochromatic $P=P_n$ with color $1$ between $A$ and $U^*$. Hence, $u^*\vee P$ is a monochromatic copy of $\widehat{K}_n$ with color $1$, a contradiction.
\end{proof}

\noindent {\bf Proof of Theorem \ref{main-kipas}:} If $n\geq 5$, then the result follows immediately from Lemmas \ref{kipas-1} and  \ref{kipas-2}.

If $n\in\{2,3\}$, then let $N=\lfloor\frac{5n}{2}\rfloor$.
Let $\Gamma\in \mathcal{B}_3(N)$ be an edge-coloring of $G=K_N$ with parts $A_1,A_2$ and $|A_1|\geq |A_2|$.
Then $|A_1|\geq 4$ when $n=3$ and $|A_1|=3$ when $n=2$.
Suppose that each edge of $G[A_i]$ is colored by $1$ or $i+1$, and the edges between $A_1$ and $A_2$ are colored by $1$. Let $H$ be the subgraph of $G[A]$ induced by edges with color $1$.
For $n=3$, if $\Delta(H)\geq 2$ or $\Delta(H)=1$ and $|A_2|\geq 2$, then there is a monochromatic copy of $\widehat{K}_3$ with color $1$; if $\Delta(H)= 0$ or $\Delta(H)= 1$ and $|A_2|=1$, then there is a monochromatic copy of $\widehat{K}_3$ with color $2$.
For $n=2$, if $\Delta(H)\geq 1$, then there is a monochromatic copy of $\widehat{K}_2$ with color $1$; if $\Delta(H)=0$, then there is a monochromatic copy of $\widehat{K}_2$ with color $2$.
Thus, we have that $b_3(\widehat{K}_n)\leq N$ for $n\in\{2,3\}$.
Let $\Gamma_1$ (resp. $\Gamma_2$) be an edge-coloring of $K_4$ (resp. $K_6$) such that the monochromatic graphs induced by color $1, 2$ and $3$ are $K_{2,2}$, $K_2$ and $K_2$ (resp. $K_{3,3}$, $K_3$ and $K_3$), respectively.
Then $\Gamma_1\in \mathcal{B}_3(4)$ (resp. $\Gamma_2\in \mathcal{B}_3(6)$) but there is not a monochromatic copy of $\widehat{K}_2$ (resp. $\widehat{K}_3$).
Therefore, we have $b_3(\widehat{K}_n)=\lfloor\frac{5n}{2}\rfloor$ for $n\in\{2,3\}$.

If $\gamma\in \mathcal{T}(N)$ is an edge-coloring of $K_N$ with the maximum part $A$, then $|A|\geq n$, and hence $G[A]$ contains a monochromatic copy of $P_n$ with color $1$ or $3$, and so $G$ contains a monochromatic copy of $\widehat{K}_n$, and hence $\operatorname{gr}_3(K_{1,3}:\widehat{K}_n)=\lfloor\frac{5n}{2}\rfloor$ for $n\in\{2,3\}$.

\section{Acknowledgements}

We gratefully thank the anonymous reviewers for their careful reading of our manuscript and their valuable comments and suggestions.
This paper is supported by the National Natural Science Foundation of China (Nos. 12201375, 12061059)


\begin{thebibliography}{1}

%\bibitem{BMOP}
%R. Bass, C. Magnant, K. Ozeki and B. Pyron. Characterizations of edge-colorings of
%complete graphs that forbid certain rainbow subgraphs. manucsript (2015).


\bibitem{BondyMurty}
J.A. Bondy, U.S.R. Murty, \emph{Graph Theory}, GTM 244, Springer,
2008.

\bibitem{CH72}
V. Chv\'{a}tal, F. Harary, Generalized Ramsey theory for graphs. III. Small off-diagonal numbers, \emph{Pacific J. Math.} 41(2) (1972), 335--345.

\bibitem{CG}
F.R.K. Chung, R.L. Graham, Edge-colored complete graphs with precisely colored
subgraphs, {\em Combinatorica} 3 (1983), 315--324.

\bibitem{EMT71}
P. Erd\H{o}s, R.J. McEliece, H. Taylor, Ramsey bounds for graph products, \emph{Pacific J. Math.} 37(1) (1971), 45--46.


\bibitem{FS}
R.J. Faudree, R.H. Schelp, Ramsey numbers for all linear forests, {\em Discrete Math.} 16 (1976), 149--155.

\bibitem{FGP}
J. Fox, A. Grinshpun, J. Pach, The Erd\"{o}s-Hajnal conjecture for rainbow triangles,
{\em J. Combin. Theory Ser. B} 111 (2015), 75--125.

\bibitem{FM-K3+}
S. Fujita, C. Magnant, Extensions of Gallai-Ramsey results, {\em J. Graph Theory} 70(4) (2012), 404--426.

\bibitem{FMO}
S. Fujita, C. Magnant, K. Ozeki, Rainbow generalizations of Ramsey theory-A dynamic survey, {\em Theory Appl. Graphs.} 0(1), Article 1, (2014)

\bibitem{Gallai}
T. Gallai, Transitiv orientierbare Graphen, {\em Acta Math. Acad. Sci. Hungar.} 18 (1967), 25--66.


\bibitem{GG}
L. Gerencs\'{e}r, A. Gy\'{a}rf\'{a}s, On Ramsey-Type Problems, {\em Annales Universitatis Scientiarum
Budapestinensis, E\"{o}tv\"{o}s Sect. Math.} 10 (1967), 167--170.


\bibitem{GLST87}
A. Gy\'{a}rf\'{a}s, J. Lehel, R.H. Schelp, Z.S. Tuza, Ramsey numbers for local colorings, \emph{Graphs Combin.} 3 (1987), 267--277.

\bibitem{GSS}
A. Gy\'{a}rf\'{a}s, G.N. S\'{a}rk\"{o}zy, A. Seb\"{o}, S. Selkow, Ramsey-type results for Gallai colorings, {\em J. Graph Theory}, 64 (2010), 233--243.


\bibitem{GS}
A. Gy\'{a}rf\'{a}s, G. Simonyi, Edge colorings of complete graphs without tricolored triangles. {\em J. Graph Theory} 46(3) (2004), 211--216.


\bibitem{Hara}
F. Harary, Recent results on generalized Ramsey theory for graphs, {\em Graph Theory and Applications,  Springer, Berlin}, 125--138 (1972)

\bibitem{LZB}
B. Li, Y. Zhang, H. Broersma, An exact formula for all star-kipas Ramsey numbers, {\em Graphs Combin.} 33 (1), 141--148.

\bibitem{LZH}
B. Li, Y. Zhang, H. Broersma, P. Holub, Closing the gap on path-kipas Ramsey numbers, {\em Electron. J. Combin.} 22(3) (2015), 3--21.

\bibitem{LMSS}
H. Liu, C. Magnant, A. Saito, I. Schiermeyer, Y.T. Shi, Gallai-Ramsey number for
$K_4$, {\em J. Graph Theory} 94 (2020), 192--205.

\bibitem{LWL}
X. Li, L. Wang, X. Liu, Complete graphs and complete bipartite graphs without rainbow path, \emph{Discrete Math.} 342 (2019), 2116--2126.

\bibitem{L}
E. Lucas, R\'{e}creations Math\'{e}matiqu\'{e}s, Vol. II. {\em Gauthier-Villars}, Paris, (1892)

\bibitem{MN-book}
C. Magnant, P.S. Nowbandegani, \emph{Topics in Gallai-Ramsey theory}, Springer Nature, Cham, Switzerland, 2020.

\bibitem{MS-K5}
C. Magnant, I. Schiermeyer, Gallai-Ramsey number for $K_5$, {\em J. Graph Theory} 101(3) (2022), 455--492.




\bibitem{Ptd}
T.D. Parsons, Path-star Ramsey numbers, {\em J. Combin. Theory, Ser. B}  17(1) (1974), 51--58.

\bibitem{Radzi}
S.P. Radziszowski, Small Ramsey numbers, {\em Electron. J. Combin.}, Dynamic Survey, 1--30 (1994)

\bibitem{SB}
A.N.M. Salman, H.J. Broersma, Path-kipas Ramsey numbers, {\em Discrete Applied
Mathematics} 155 (2007), 1878--1884

\bibitem{TW}
A. Thomason, P. Wagner, Complete graphs with no rainbow path.
{\em J. Graph Theory} 54 (3) (2007), 261--266.


\bibitem{WHMZ22}
M. Wei, C. He, Y. Mao, X. Zhou, Gallai-Ramsey numbers for rainbow $P_5$ and monochromatic fans or wheels, \emph{Discrete Math.} 345 (2022), 113092.
\end{thebibliography}
\end{document}